\newcolumntype{L}{>{$}l<{$}}
\DeclareMathSymbol{\shortminus}{\mathbin}{AMSa}{"39}
\newtheorem{theorem}{Theorem}[section]
\newtheorem{lemma}[theorem]{Lemma}
\newtheorem{corollary}[theorem]{Corollary}
\newtheorem{proposition}[theorem]{Proposition}
\theoremstyle{definition}  
\newtheorem{definition} [theorem] {Definition}
\newtheorem{remark} [theorem] {Remark}
\newtheorem{question} [theorem] {Question}
\theoremstyle{definition}
\newcommand{\F}{{\mathbb{F}}}
\newcommand{\Q}{{\mathbb{Q}}}
\newcommand{\R}{{\mathbb{R}}}
\newcommand{\Z}{{\mathbb{Z}}}
\newcommand{\surj}{\twoheadrightarrow}
\newcommand{\inj}{\hookrightarrow}
\newcommand{\spin}{\text{spin}}
\DeclareMathOperator{\HFL}{HFL}
\DeclareMathOperator{\HFK}{HFK}
\DeclareMathOperator{\HF}{HF}
\DeclareMathOperator{\CFK}{CFK}
\DeclareMathOperator{\CF}{CF}
\DeclareMathOperator{\rank}{rank}
\newif\ifrevision
\newcommand{\change}[1]{%
\ifrevision
\textcolor{blue}{#1}%
\else
#1%
\fi
}
\title{The $\CFK^\infty$ type of Almost $L$-space knots}
\author{Fraser Binns}
\address{Department of Mathematics, Princeton University}
\email{fb1673@princeton.edu}
\date{\today}
\begin{document}
\revisionfalse     

\begin{abstract}
    Heegaard Floer homology and knot Floer homology are powerful invariants of $3$-manifolds and links respectively. $L$-space knots are knots which admit Dehn surgeries to $3$-manifolds with Heegaard Floer homology of minimal rank. In this paper we study almost $L$-space knots, which are knots admitting large Dehn surgeries to $3$-manifolds with Heegaard Floer homology of next-to-minimal rank. Our main result is a classification of the $\CFK^\infty(-)$ type of almost $L$-space knots. As corollaries we show that almost $L$-space knots satisfy various topological properties, including some given by Baldwin-Sivek~\cite{baldwin2022characterizing}. We also give some new cable link detection results.
\end{abstract}
\maketitle

\section{Introduction}

Heegaard Floer homology is a package of invariants in low dimensional topology due to Ozsv\'ath-Szab\'o~\cite{ozsvath2004holomorphic}. We will be interested in the $3$-manifold invariant, which is due to Ozsv\'ath-Szab\'o~\cite{ozsvath2004holomorphic}, and the knot invariant which is due independently to Ozsv\'ath-Szab\'o~\cite{Holomorphicdisksandknotinvariants} and J.Rasmussen~\cite{Rasmussen}. The three manifold invariant, denoted $\widehat{\HF}(-)$, satisfies the following rank inequality;

\begin{equation}\label{H1rankbound}
    \rank(\widehat{\HF}(Y))\geq |H_1(Y;\Z)|
\end{equation}

Here $Y$ is a rational homology sphere and $|H_1(Y;\Z)|$ is the number of elements in $H_1(Y;\Z)$. This equation results from the fact that an appropriate \change{decategorification} of $\widehat{\HF}(Y)$ yields $|H_1(Y;\Z)|$. An \emph{$L$-space} is a rational homology sphere for which inequality~\ref{H1rankbound} is tight. Understanding the geometric and algebraic properties of $L$-spaces is of central interest in low dimensional topology and the subject of Boyer-Gordon-Watson's ``$L$-space conjecture"~\cite{boyer2013spaces}.

\emph{Dehn-surgery} is a well studied operation on $3$-manifolds. Let $q\in \Q$ and $K$ be a knot in $S^3$. $q$-surgery on $K$, denoted $S^3_q(K)$, is the manifold obtained by removing a tubular neighborhood of $K$ from $S^3$, then regluing it with framing determined by $q$. An \emph{$L$-space knot} is a knot $K$ for which $S^3_n(K)$ is an $L$-space for for some $n\in\Z^{\geq 0}$. The knot Floer homology of $L$-space knots is well understood, by work of Ozsv\'ath-Szab\'o~\cite{ozsvath2005knot}.

In this paper we will study \emph{almost $L$-space knots}. The following definitions are due to Baldwin-Sivek~\cite{baldwin2022characterizing}.

\begin{definition}
Let $Y$ be a rational homology sphere. We call $Y$ an \emph{almost $L$-space} if $\rank(\widehat{\HF}(Y))=|H_1(Y;\Z)|+2$.
\end{definition}

We note that there is no rational homology sphere with $\rank(\widehat{\HF}(Y))=|H_1(Y;\Z)|+1$, as the \change{decategorification} from $\widehat{\HF}(Y)$ to $|H_1(Y;\Z)|$ respects parity. Almost $L$-spaces are thus the rational homology spheres for which  inequality~\ref{H1rankbound} is ``almost tight". There is a question of whether or not this is quite the ``correct" definition for almost $L$-spaces. See Section~\ref{remarkssection} for some discussion.

\begin{definition}{\cite{baldwin2022characterizing}}
A knot $K$ in $S^3$ is called an \emph{almost $L$-space knot} if there exists an $n\geq 2g(K)-1$ such that $\rank(\widehat{\HF}(S^3_n(K)))=n+2$.
\end{definition}

As we will see, there are a number of equivalent definitions of almost $L$-space knots. The reason to include the condition $n\geq 2g(K)-1$ is that if $K$ is an $L$-space knot then $\rank(\widehat{\HF}(S^3_{2g(K)-2}(K)))=2g(K)$.

We turn our attention now to knot Floer homology. The strongest version of knot Floer homology is the filtered chain homotopy type of a chain complex $\CFK^\infty(-)$. This is a finitely generated chain complex over $\F[U,U^{-1}]$ equipped with two gradings; the $U$ grading and the Alexander grading.  Our main result is a classification of $\CFK^\infty(-)$ for almost $L$-space knots;

\begin{theorem}

\label{infinityclassification}

Let $K$ be an almost $L$-space knot. Then $\CFK^\infty(K)$ has the filtered chain homotopy type of one of the following complexes;\begin{enumerate}[label=(\Alph*)]
    \item\label{staircasebox} A staircase complex direct sum a box complex.
    \item\label{almoststaircase} An almost staircase complex.
\end{enumerate}

\end{theorem}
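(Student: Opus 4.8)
The plan is to translate the almost $L$-space hypothesis into rank data for the ``large surgery'' complexes and then to reconstruct the filtered chain homotopy type from that data. Write $\hat{A}_s = C\{\max(i,j-s)=0\}$ for the subquotient complexes of $\CFK^\infty(K)$ that appear in the large surgery formula, so that for $n\geq 2g(K)-1$ one has $\widehat{\HF}(S^3_n(K),[s])\cong H_*(\hat{A}_s)$ and hence $\rank\widehat{\HF}(S^3_n(K))=\sum_s \rank H_*(\hat{A}_s)$, the sum running over all $n$ spin$^c$ structures. First I would record two constraints on the summands: each $\rank H_*(\hat{A}_s)$ is odd, since its Euler characteristic is $\pm 1$, and the conjugation symmetry of $\CFK^\infty(K)$ gives $\hat{A}_s\simeq\hat{A}_{-s}$, so the ranks are symmetric in $s$. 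The almost $L$-space condition forces $\sum_s\rank H_*(\hat{A}_s)=n+2$, so the total excess over the minimal value $n$ is exactly $2$; as each excess $\rank H_*(\hat{A}_s)-1$ is even and nonnegative and the excesses are symmetric, the excess must be concentrated at a single index, which symmetry pins to $s=0$. This yields the key intermediate statement $H_*(\hat{A}_0)\cong\F^3$ and $H_*(\hat{A}_s)\cong\F$ for all $s\neq 0$.

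Next I would pass to a vertically and horizontally simplified basis for $\CFK^\infty(K)$, presenting the complex as a disjoint union of zig-zag chains in the $(i,j)$-plane in which every generator meets at most one vertical and one horizontal arrow. Since $H_*(C\{i=0\})=H_*(C\{j=0\})=\F$, each computing $\widehat{\HF}(S^3)$, exactly one generator survives vertically and one horizontally, so the complex consists of a single non-compact ``staircase-type'' component carrying the free part of the homology together with a collection of acyclic components. The condition $\rank H_*(\hat{A}_s)=1$ for every $s\neq 0$ is precisely the constraint cutting out $L$-space-knot behaviour away from the central Alexander grading: I would use it to force the distinguished component to be a genuine staircase outside the central region and to force every acyclic component to contribute rank $0$ to each $\hat{A}_s$ with $s\neq 0$, so that all of the rank excess is confined to $s=0$.

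The remaining work is local, at $s=0$. A direct computation shows that a single box summand contributes exactly rank $2$ to $H_*(\hat{A}_0)$ and rank $0$ to $H_*(\hat{A}_s)$ for $s\neq 0$ (here one must remember to include all $U$-translates of the box generators when forming the subquotient), so the only acyclic contribution compatible with the rank data is the homotopy type of one box, giving case~\ref{staircasebox}; the alternative, in which the two extra generators are instead woven into the distinguished component rather than splitting off, produces the almost staircase of case~\ref{almoststaircase}. I would therefore carry out a finite case analysis at the central grading, organised by whether the two extra generators split off as a direct summand, and check using the Maslov gradings and the conjugation symmetry that precisely the two listed models survive.

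The main obstacle is this last reconstruction step. The rank data of the $\hat{A}_s$ is a priori far weaker than the filtered chain homotopy type, so the difficulty is in proving that no more complicated non-split configuration of generators and arrows is compatible with ``rank $3$ at $s=0$, rank $1$ elsewhere'', and in cleanly separating the split case~\ref{staircasebox} from the non-split case~\ref{almoststaircase}. I expect that controlling the interaction of the extra central generators with the staircase, and ruling out longer or multiply-nested acyclic pieces, will require the most care.
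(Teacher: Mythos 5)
Your first paragraph reproduces the paper's actual starting point (Lemma~4.1 of the paper): the excess rank must live in a single self-conjugate $\spin^c$ structure, giving $H_*(\hat{A}_0)\cong\F^3$ and $H_*(\hat{A}_s)\cong\F$ otherwise. One repair is needed even there: symmetry alone does not pin the excess to $s=0$, because when $n$ is even the structure $[n/2]$ is also self-conjugate. The paper rules this out using $n\geq 2g(K)-1$ (so that $n/2\geq g(K)$) together with the surgery formula, which forces rank one in every structure $[i]$ with $i\geq g(K)$; your argument needs this step too.

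The genuine gap is your second step. You ``pass to a vertically and horizontally simplified basis'' and assert that $\CFK^\infty(K)$ then decomposes as a disjoint union of zig-zag chains in which every generator meets at most one horizontal and one vertical arrow. No such structure theorem is available to quote: it is not known that an arbitrary knot complex admits a simultaneously vertically and horizontally simplified basis (this is a standing technical issue already in Lipshitz--Ozsv\'ath--Thurston's bordered theory, and the Hanselman--Rasmussen--Watson structure theorem produces immersed curves decorated with possibly nontrivial local systems, not bare zig-zags). Assuming this decomposition is very close to assuming the conclusion: once the complex is a sum of zig-zags and closed loops, the dichotomy between cases (A) and (B) follows almost immediately from your rank data. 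The paper never invokes such a basis. Instead it reconstructs the complex from scratch: Section~4 uses the exact triangles relating $H_*(X_m)$, $H_*(Y_m)$, $H_*(UX_m)$ and $H_*(\{(0,m)\})$ to show $\rank\widehat{\HFK}(K,j)\leq 1$ for $|j|>1$ and to constrain the ranks at $j=0,\pm 1$ (including excluding rank $3$ in Alexander grading $1$); Section~5 pins down the Maslov gradings case by case; and Section~6 assembles the filtered chain homotopy type, using Petkova's thin-complex lemma in one subcase and explicit filtered changes of basis to remove diagonal differentials. That reconstruction --- exactly the step you defer and flag as ``the main obstacle,'' including the distinction between the two types of almost staircase --- is where essentially all of the content of the theorem lies, so what you have is a plausible plan resting on an unproven structural assumption rather than a proof.
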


The definitions of the complexes referred to in this theorem are deferred to Section~\ref{review}. They are illustrated, however, in Figures~\ref{staircasefig}, \ref{boxfig}, \ref{almoststaircase1fig} and \ref{almoststaircase2fig}. \change{We also note that there are two distinct types of almost staircase complex; one as in Definition~\ref{def:almoststaircase1} and Figure~\ref{almoststaircase1fig} which we call ``almost staircases of type one" and another as in Definition~\ref{def:almoststaircase2} and Figure~\ref{almoststaircase2fig} which we call ``almost staircases of type 2". Moreover, in some degenerate cases almost staircase complexes are filtered chain homotopy equivalent to staircase plus box complexes.}
\begin{figure}[h]\label{staircasefig}
\begin{tikzcd}
    y_1 & x_1\arrow[l]\arrow[dd]\\\\& y_2&& x_2\arrow[ll]\arrow[d]\\&&& y_3
\end{tikzcd}
\caption{A staircase complex\change{, as in part~\ref{staircasebox} of Theorem~\ref{infinityclassification}.} The horizontal direction indicates the $U$ grading and the vertical direction indicates the $A$ grading.}
\end{figure}

\begin{figure}[h]
\begin{tikzcd}
    x_1\arrow[d] & x_2\arrow[l]\arrow[d]\\ x_3&x_4\arrow[l]
\end{tikzcd}

\caption{A box complex\change{, as in part~\ref{staircasebox} of Theorem~\ref{infinityclassification}.} The horizontal direction indicates the $U$ grading and the vertical direction indicates the $A$ grading. Note the arrows are of length one.}\label{boxfig}
\end{figure}

\begin{figure}[h]
\begin{tikzcd}
    x_{-2} & y_{-1}\arrow[l]\arrow[dd]\arrow[dl]\\
    x_{1}\arrow[d]&&y_{1}\arrow[ll]\arrow[dl]\arrow[d]\\ z&x_{-1}\arrow[l]&x_{2}
\end{tikzcd}
\caption{An almost staircase complex of type $1$\change{, as in part~\ref{almoststaircase} of Theorem~\ref{infinityclassification}.}. The horizontal direction indicates the $U$ grading and the vertical direction indicates the $A$ grading.}\label{almoststaircase2fig}
\end{figure}

\begin{figure}[h]
\begin{tikzcd}
    x_{-2} & y_{-2}\arrow[l]\arrow[dd]\\ &&y_1\arrow[dd]\arrow[dl]&z\arrow[l]\arrow[d]\\
    &x_{-1}&& y_{-1}\arrow[ll]\arrow[dl]\\&& x_1 && y_2\arrow[ll]\arrow[d]\\&&&& x_2
\end{tikzcd}

\caption{An almost staircase complex of type $2$\change{, as in part~\ref{almoststaircase} of Theorem~\ref{infinityclassification}.}. The horizontal direction indicates the $U$ grading and the vertical direction indicates the $A$ grading.}\label{almoststaircase1fig}
\end{figure}

We note in contrast that an $L$-space knot has $\CFK^\infty$ given by a staircase complex, as proven by Ozsv\'ath-Szab\'o~\cite{ozsvath2005knot}.

It is straightforward to construct examples of almost $L$-space knots with $\CFK^\infty(-)$ \change{type of almost staircase complexes of type 1} by taking $(2,4g(K)-3)$-cables of $L$-space knots. Examples of knots with $\CFK^\infty$ \change{type of box plus staircase complexes} include the figure eight, the mirror of $5_2$, $T(2,3)\# T(2,3)$, $10_{139}$, and $12n_{725}$\footnote{\change{After this paper was first appeared, the author and Zhou found an infinite family of $(1,1)$ knots with $\CFK^\infty$ of type (A)~\cite{binns20231}}.}. There are many almost staircase complexes which do not arise as the complexes of $(2,4g(K)-3)$-cables of $L$-space knots or as box plus staircase complexes. The author is unaware if any such complex arises as $\CFK^\infty(K)$ for some $K$. Indeed, the author is likewise unaware of a classification of staircase complexes realised as $\CFK^\infty(K)$ for some $K$.

We prove Theorem~\ref{infinityclassification} by adapting the techniques Ozsv\'ath-Szab\'o use in classifying the $\CFK^\infty$ type of $L$-space knots. With Theorem~\ref{infinityclassification} at hand we can obtain the following result:

\newtheorem*{stability}{Proposition~\ref{stability}}
\begin{stability}
Let $K$ be a knot of genus $g$. $K$ is an almost $L$-space knot if and only if $\rank(\widehat{\HF}(S^3_{p/q}(K)))=p+2q$ for all $p/q\geq 2g-1$.
\end{stability}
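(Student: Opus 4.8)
The plan is to prove the two implications separately: the reverse implication is essentially a restatement of the definition, while the forward implication rests on Theorem~\ref{infinityclassification} together with the Ozsv\'ath--Szab\'o rational surgery mapping cone formula. For the ``if'' direction, suppose $\rank(\widehat{\HF}(S^3_{p/q}(K)))=p+2q$ for all $p/q\geq 2g-1$. An almost $L$-space knot has $g\geq 1$ (the only genus $0$ knot is the unknot, which is an $L$-space knot), so there is an integer $n\geq 2g-1$; specializing to $q=1$, $p=n$ gives $\rank(\widehat{\HF}(S^3_n(K)))=n+2$, and since $|H_1(S^3_n(K);\Z)|=n$ this is exactly the defining condition for $K$ to be an almost $L$-space knot.

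For the ``only if'' direction, assume $K$ is an almost $L$-space knot. For $p/q\geq 2g-1$ the rational surgery formula computes $\widehat{\HF}(S^3_{p/q}(K))$ as the homology of the mapping cone of a map $D\colon \bigoplus_{s\in\Z}\widehat{A}_{\lfloor s/q\rfloor}\to\bigoplus_{s\in\Z}\widehat{B}$, where $\widehat{A}_j$ is the large surgery complex $C\{\max(i,j-s)=0\}$, $\widehat{B}\simeq\widehat{\CF}(S^3)$, and the components of $D$ are the vertical and horizontal projections $\widehat{v}_j,\widehat{h}_j$. From the long exact sequence of the cone, $\rank(\widehat{\HF}(S^3_{p/q}(K)))=\dim\ker D_\ast+\dim\operatorname{coker}D_\ast$, where $D_\ast$ is the induced map on homology; all sums are finite after the standard truncation, since $\widehat{v}_j$ is an isomorphism for $j\gg0$ and $\widehat{h}_j$ for $j\ll0$. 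First I would pin down the $\widehat{A}_j$: for integer $n\geq 2g-1$ the large surgery formula gives $\rank(\widehat{\HF}(S^3_n(K)))=n+\mathcal E$ with $\mathcal E:=\sum_j(\rank H_\ast(\widehat{A}_j)-1)$ a finite nonnegative even integer, each $\rank H_\ast(\widehat{A}_j)$ being odd (the rank of $\widehat{\HF}$ of a rational homology sphere in a single $\mathrm{Spin}^c$ structure). The almost $L$-space condition forces $\mathcal E=2$, and since the summands are even and sum to $2$ there is a unique $s^\ast$ with $\rank H_\ast(\widehat{A}_{s^\ast})=3$ and all other $\widehat{A}_j$ of homology rank $1$. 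In the cone $\widehat{A}_{s^\ast}$ occurs exactly $q$ times, at positions $s\in\{s^\ast q,\dots,s^\ast q+q-1\}$, contributing $2q$ to $\dim\bigoplus_s H_\ast(\widehat{A}_{\lfloor s/q\rfloor})$ beyond the all--rank--one baseline.

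The computation then reduces to two points: that the baseline gives $p$, and that the extra $2q$ dimensions survive, i.e. lie in $\ker D_\ast$ without decreasing $\dim\operatorname{coker}D_\ast$. The baseline is the formal statement that the mapping cone of the underlying staircase has homology of rank $p$ for $p/q\geq 2g-1$, the algebraic shadow of the fact that $L$-space knots have $L$-space surgeries; this holds for any staircase complex and can be verified directly from the interlocking pattern of isomorphisms among the $\widehat{v}_j,\widehat{h}_j$. For the survival I would use the explicit complexes of Theorem~\ref{infinityclassification}. In case~\ref{staircasebox}, $\CFK^\infty(K)$ splits as a staircase direct sum a box, so $\widehat{A}_s$, $\widehat{B}$, and hence $D_\ast$ split accordingly; the box complex is acyclic, so its $\widehat{B}$--homology vanishes, its $\widehat{A}_{s^\ast}$--contribution has rank $2$, and $D_\ast$ restricted to the box part is the zero map $\F^{2q}\to 0$. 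This contributes exactly $2q$ to $\ker D_\ast$ and nothing to $\operatorname{coker}D_\ast$, giving $\rank=p+2q$ immediately.

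In case~\ref{almoststaircase} there is no such splitting, and this is where I expect the main obstacle to lie: the almost staircase complexes do not split off the two extra generators, so one cannot argue by acyclicity and must instead analyze $\widehat{v}_{s^\ast}$ and $\widehat{h}_{s^\ast}$ directly. I would verify from Figures~\ref{almoststaircase1fig} and~\ref{almoststaircase2fig} that the two extra homology classes of $\widehat{A}_{s^\ast}$ lie in the common kernel of the vertical and horizontal projections, so that $\dim\ker D_\ast$ again picks up $2q$ while $\dim\operatorname{coker}D_\ast$ is unchanged. Concretely this should come down to a finite, if slightly delicate, inspection of the subquotients $\widehat{A}_{s^\ast}=C\{\max(i,j-s^\ast)=0\}$ for the two model complexes, checking in each case that the relevant cycles represent classes killed by both projections; once this is confirmed, summing the contributions yields $\rank(\widehat{\HF}(S^3_{p/q}(K)))=p+2q$ for all $p/q\geq 2g-1$.
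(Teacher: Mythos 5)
Your reverse implication is fine, and in fact simpler than the paper's: since your hypothesis holds at every slope, specializing to a large integer $n\geq 2g-1$ with $q=1$ gives exactly the defining condition. (The paper instead proves the stronger statement that a \emph{single} rational slope $p/q\geq 2g-1$ with rank $p+2q$ forces $K$ to be an almost $L$-space knot.) Your forward implication takes a genuinely different route from the paper: the paper converts Theorem~\ref{infinityclassification} into the immersed-curve invariant (non-vertical slope $m=2g-1$, with $n=2g+1$ vertical segments) and then quotes Hanselman's rank formula, whereas you compute with the Ozsv\'ath--Szab\'o mapping cone directly. Your case~\ref{staircasebox} argument (the box summand is acyclic, splits off, and contributes $2q$ to $\ker D_*$ and nothing to the cokernel) is correct.

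The step you defer in case~\ref{almoststaircase} is, however, a genuine gap, because the claim you propose to verify is false. Take $K=T(2,-3)$, the type $1$ almost staircase with $N=0$: generators $z$, $x_1$, $x_{-1}$ with $\partial x_{\pm1}=z$; place $z$ at $(0,0)$, $x_1$ at $(0,1)$, $x_{-1}$ at $(0,-1)$. Then $\widehat{A}_0$ is spanned by $z$, $Ux_1$, $x_{-1}$ with vanishing induced differential, so $H_*(\widehat{A}_0)\cong\F^3$, while in $\widehat{B}=C(\{i=0\})$ one has $z=\partial x_1$, so $H_*(\widehat{B})\cong\F$ is generated by $[x_{-1}]$. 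Hence $\widehat{v}_{0*}$ is \emph{surjective}, with kernel spanned by $[z]$ and $[Ux_1]$, and symmetrically $\widehat{h}_{0*}$ is surjective with kernel spanned by $[z]$ and $[x_{-1}]$: the common kernel is the single class $[z]$, so there is no two-dimensional space of ``extra'' classes killed by both projections. The rank of $+1$-surgery is nevertheless $3=p+2q$ (this manifold is $-\Sigma(2,3,7)$), but by a different mechanism: $D_*[x_{-1}]$ and $D_*[Ux_1]$ coincide with the images of the classes of the neighboring $\widehat{A}_{-1}$ and $\widehat{A}_{1}$, because for this knot $\widehat{v}_{s*}\neq 0$ precisely when $s\geq 0$ and $\widehat{h}_{s*}\neq 0$ precisely when $s\leq 0$ (reflecting $\nu(T(2,-3))=0$), a pattern shifted relative to an $L$-space staircase, where the threshold sits at $|s|=g$. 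This pinpoints the real problem with your plan: in case~\ref{almoststaircase} there is no staircase summand, so ``the baseline gives $p$'' has no meaning until you determine, for \emph{every} complex in the two infinite families --- not just the two genus-two complexes drawn in the figures, for which your common-kernel claim does appear to hold --- exactly which $\widehat{v}_{s*}$ and $\widehat{h}_{s*}$ vanish; as the example shows, this vanishing pattern is not uniform across the family. That bookkeeping is the actual content of the forward implication, and it is precisely what the paper's argument packages into the two integers $m=2g-1$ and $n=2g+1$ fed into Hanselman's formula $\rank(\widehat{\HF}(S^3_{p/q}(K)))=|p-qm|+n|q|$.
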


This is an analogue of the result that if $K$ is a knot of genus $g$ then $K$ is an $L$-space knot if and only if $\rank(\widehat{\HF}(S^3_{p/q}(K)))=p$ for all $p/q\geq 2g-1$. We prove this result with the aid of \change{Hanselman}-Rasmussen-Watson's immersed curve interpretation of the surgery formula in bordered Floer homology~\cite{hanselman2016bordered}.

$\CFK^\infty(-)$ comes equipped with a grading called the Maslov grading. The differential lowers the Maslov grading by $1$ while the $U$ action decreases it by $2$. There is a weaker version of $\CFK^\infty(K)$, denoted $\widehat{\HFK}(K)$, that carries only a Maslov grading and an Alexander grading. Theorem~\ref{infinityclassification} can be sharpened somewhat via the following proposition;

\newtheorem*{prop:delta0}{Proposition~\ref{prop:delta0}}
\begin{prop:delta0}
  Suppose $K$ is an almost $L$-space knot with the $\CFK^\infty$ type of a box plus staircase complex. Then $\widehat{\HFK}(K,0)$ is supported in a single Maslov grading.
\end{prop:delta0}

 This proposition is proven using involutive knot Floer homology~\cite{hendricks2017involutive} and Sarkar's basepoint moving map~\cite{sarkar2015moving}. This result can perhaps be though of as a relative version of a result of Hanselman-Kutluhan-Lidman concerning the geography problem for $\HF^+(Y,\mathfrak{s})$ -- another version of Heegaard Floer homology -- for $\mathfrak{s}$ a self conjugate $spin^c$ structure, with $\widehat{\HF}(Y,\mathfrak{s})$ of next to minimal rank~\cite{hanselman2019remark}.

A number of topological properties of $L$-space knots can be deduced from the classification of their $\CFK^\infty$-type -- for example that they are strongly-quasi-positive and fibered. Likewise Theorem~\ref{infinityclassification} allows us to show that almost $L$-space knots satisfy or almost satisfy various strong topological properties.

\newtheorem*{genus1class}{Corollary~\ref{genus1class}}
\begin{genus1class}[\cite{baldwin2022characterizing}]
Suppose $K$ is a genus one almost $L$-space knot. Then $K$ is $T(2,-3)$, the figure eight knot or  the mirror of the knot $5_2$.
\end{genus1class}

This result is the analogue of the fact that the only genus one $L$-space knot is $T(2,3)$.

\newtheorem*{comp}{Corollary~\ref{comp}}
\begin{comp}
The only composite almost $L$-space knot is $T(2,3)\# T(2,3)$.
\end{comp}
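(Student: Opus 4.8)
The plan is to combine the connected-sum formula $\CFK^\infty(K_1\# K_2)\simeq \CFK^\infty(K_1)\otimes_{\F[U,U^{-1}]}\CFK^\infty(K_2)$ with Theorem~\ref{infinityclassification}. For a knot $J$ set $b(J):=\tfrac12\big(\rank\widehat{\HF}(S^3_n(J))-n\big)$ for $n\gg 0$; by the large surgery formula $b(J)$ counts the box summands of a box-plus-staircase complex, so that $b(J)=0$ precisely when $J$ is an $L$-space knot, while Theorem~\ref{infinityclassification} together with Proposition~\ref{stability} gives $b(J)=1$ precisely when $J$ is an almost $L$-space knot (of either type). The whole argument will take place at the level of the filtered complex: as I note below, $\widehat{\HFK}(K)$ alone does not see the box count, so the graded data is insufficient.

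First I would show that if $K=K_1\# K_2$ is composite with both factors nontrivial and $b(K)=1$, then both $K_1$ and $K_2$ are $L$-space knots. The idea is that the reduced (non-$HF^\infty$) part of a complex multiplies under $\otimes$: the acyclic box piece has length-one arrows, and tensoring any reduced summand of one factor against the staircase of the other (which has at least three generators) already produces several boxes, so if either factor fails to be an $L$-space knot one gets $b(K)\ge 2$, a contradiction; making this multiplicativity precise is the content of the step. The same mechanism rules out the almost-staircase case \ref{almoststaircase} for composites, since a tensor product of two complexes that each reduce to a staircase plus boxes is again of that form, hence of type \ref{staircasebox} with a well-defined box count. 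Finally, positivity of the surviving staircase — forced by the requirement in the definition that large \emph{positive} surgeries have small rank — shows the $K_i$ are genuine positive $L$-space knots and not mirrors; this already excludes $T(2,-3)\# T(2,-3)$, $T(2,3)\# T(2,-3)$, and all mixed-sign products.

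It then remains to determine the pairs of nontrivial positive $L$-space knots $K_1,K_2$ whose staircase tensor product $S_1\otimes S_2$ has exactly one box. If $g(K_1)=g(K_2)=1$ then each factor is $T(2,3)$, the unique genus-one $L$-space knot, and one checks directly that $\CFK^\infty(T(2,3)\# T(2,3))$ is a single box together with the length-four staircase (its $9=5+4$ generators leave room for exactly one box, and $\tau=2$ forces the staircase to have five generators), so this case yields exactly $T(2,3)\# T(2,3)$. The crux is therefore to prove that if $\max(g(K_1),g(K_2))\ge 2$ then $S_1\otimes S_2$ contains at least two boxes. I would attack this by computing the box count directly from the step data of the two staircases — for example by showing that the two highest Alexander gradings of the product force a box centred away from grading zero in addition to the central one — and then checking that the count only grows with the genus, so that no higher-genus product can achieve box count one.

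The hard part will be exactly this box count for a tensor product of staircases, and the essential point is that it must be carried out at the filtered level. Indeed, the associated-graded invariant $\widehat{\HFK}(K)$ is consistent with a single box even for some sparse high-genus supports where the filtered complex in fact splits off additional boxes, so no counting of $\widehat{\HFK}$-ranks or of Alexander-polynomial coefficients can settle the question. I expect the cleanest route is either to track the homologies of the large-surgery complexes $\widehat{A}_s(S_1\otimes S_2)$ grading by grading, or to reinterpret the box count through the immersed-curve formalism of Hanselman--Rasmussen--Watson already invoked for Proposition~\ref{stability}, in which boxes appear as small embedded loops; in either model I expect the genus-one trefoil configuration to be the unique one producing a single box, which then identifies $T(2,3)\# T(2,3)$ as the only composite almost $L$-space knot.
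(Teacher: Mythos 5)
Your proposal stalls at exactly the two steps you yourself defer, and both gaps are genuine. First, the ``multiplicativity'' step: to conclude that both factors of $K_1\# K_2$ must be $L$-space knots you need that a non-staircase summand in either factor forces surgery-rank excess at least $4$ in the sum. But $\CFK^\infty$ of a non-$L$-space factor is a priori an arbitrary filtered complex, and the large-surgery complexes $\widehat{A}_s$ of a tensor product do \emph{not} decompose as tensor products of the factors' $\widehat{A}_s$'s, so this is not a formal K\"unneth consequence; no argument is supplied. Second, the crux --- that $S_1\otimes S_2$ has box count at least $2$ whenever $g(K_1)+g(K_2)\geq 3$ --- is left entirely as an expectation (``I expect the genus-one trefoil configuration to be the unique one\dots''). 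Worse, the bookkeeping it rests on is flawed: for $L$-space knots whose staircases have steps of length greater than one (e.g.\ $T(3,4)$), the acyclic summands of $S_1\otimes S_2$ are rectangles with longer sides, which are \emph{not} box complexes in the sense of Definition~\ref{def:box} (that definition requires length-one arrows), so ``number of boxes'' is not even well defined for these products; you would have to replace it by half the surgery-rank excess and then re-derive its relation to the classification, which is precisely the point at issue.

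Your framing premise --- that ``no counting of $\widehat{\HFK}$-ranks or of Alexander-polynomial coefficients can settle the question'' --- is also wrong, and this is where the paper is much more efficient. The paper's proof never counts boxes and works entirely with graded data: Theorem~\ref{infinityclassification} forces an almost $L$-space knot to satisfy $\rank\widehat{\HFK}(K,A)\leq 1$ for $|A|>1$, in particular rank $1$ in the top grading when $g(K)>1$, hence fibered by Ghiggini--Ni. For $g(K)>2$, a composite fibered knot has both factors fibered, so by Baldwin--Vela-Vick and the K\"unneth formula its rank in grading $g(K)-1>1$ is at least $2$, a contradiction. For $g(K)=2$, K\"unneth plus the requirement that the top class have Maslov grading $0$ forces both factors to be genus-one, fibered, strongly quasi-positive (Hedden), hence each is $T(2,3)$; knots of genus at most $1$ are prime; and one checks directly that $T(2,3)\# T(2,3)$ is an almost $L$-space knot. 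So the graded invariant, combined with the constraints of Theorem~\ref{infinityclassification} and known detection theorems, is exactly sufficient, and the filtered-level tensor analysis you propose is both unnecessary and, as outlined, incomplete.
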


This is the analogue of \change{ Krcatovich's result that $L$-space knots are prime~\cite[Theorem 1.2]{krca}. See also~\cite[Corollary 1.4]{baldwin_note_2018}}.

\newtheorem*{fibered}{Corollary~\ref{fibered}}
\begin{fibered}[\cite{baldwin2022characterizing}]
The mirror of $5_2$ is the only almost $L$-space knot which is not fibered.
\end{fibered}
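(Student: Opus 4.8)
The plan is to read the top Alexander grading of $\widehat{\HFK}$ off the classification and apply a fiberedness detection theorem. Recall that a knot $K$ is fibered if and only if $\widehat{\HFK}(K,g(K))\cong \F$, where $g(K)$ is the Seifert genus; this combines Ozsv\'ath--Szab\'o's genus detection with Ni's converse (and Ghiggini in genus one). Recall also that $\CFK^\infty(K)$ is free over $\F[U,U^{-1}]$ of rank $\dim_\F\widehat{\HFK}(K)$, with a basis in bijection with a homogeneous basis of $\widehat{\HFK}(K)$ respecting Alexander gradings; hence $\dim_\F\widehat{\HFK}(K,s)$ is exactly the number of reduced generators of $\CFK^\infty(K)$ in Alexander grading $s$. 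So it suffices to count generators in the top Alexander grading for each complex appearing in Theorem~\ref{infinityclassification}.

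First I would dispose of the staircase summand and the almost staircase complexes. A staircase complex has a single generator in its top (and bottom) Alexander grading, so a pure staircase --- the $L$-space case --- is fibered, recovering the classical fact. For the almost staircase complexes of part~\ref{almoststaircase}, I would check directly from the descriptions in Section~\ref{review} (and Figures~\ref{almoststaircase1fig} and~\ref{almoststaircase2fig}) that the extreme Alexander gradings are again occupied by a single generator; consequently every almost $L$-space knot whose complex is an almost staircase is fibered. This is consistent with part~\ref{almoststaircase} being realised by the $(2,4g(K)-3)$-cables of $L$-space knots, which are fibered because cabling preserves fiberedness.

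Next I would treat the box plus staircase complexes of part~\ref{staircasebox}. Since $\widehat{\HFK}$ is symmetric under $s\mapsto -s$ and a staircase is symmetric on its own, the box summand must contribute a symmetric set of generators; as its arrows have length one (Figure~\ref{boxfig}) the box is supported in Alexander gradings $\{-1,0,1\}$ with a single generator in grading $1$. Thus the top Alexander grading of $K$ has rank one --- so $K$ is fibered --- unless the staircase summand \emph{also} attains its top in Alexander grading $1$, i.e.\ unless the staircase is the genus one (trefoil) staircase. In that exceptional configuration $g(K)=1$ and $\dim_\F\widehat{\HFK}(K,1)=2$, so $K$ is not fibered.

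Putting this together, non-fiberedness forces $g(K)=1$. I would then invoke the already established genus one classification, Corollary~\ref{genus1class}: a genus one almost $L$-space knot is $T(2,-3)$, the figure eight knot, or the mirror of $5_2$. The first two have $\widehat{\HFK}$ of rank one in the top grading and are fibered, while the mirror of $5_2$ has $\dim_\F\widehat{\HFK}(K,1)=2$ and is not; hence it is the unique non-fibered almost $L$-space knot. The hard part will be the verification in the second paragraph: one must rule out a second generator appearing in the top Alexander grading of the almost staircase complexes --- in particular for the type two complexes of Figure~\ref{almoststaircase1fig} --- which I expect to require the precise combinatorial description of these complexes from Section~\ref{review} rather than the schematic pictures alone.
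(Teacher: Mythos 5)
Your proposal is correct and follows essentially the same route as the paper: use Theorem~\ref{infinityclassification} to see that the top Alexander grading of $\widehat{\HFK}$ has rank one whenever $g(K)>1$, apply the Ghiggini--Ni fiberedness criterion, and settle the genus-one case by Corollary~\ref{genus1class}. The paper's proof is simply a terser version of this argument (it states the rank-one claim for $g(K)>1$ directly and leaves the genus-one reduction to Corollary~\ref{genus1class} implicit), while you verify the top-grading count complex-by-complex.
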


This is the analogue of the fact that $L$-space knots are fibered.

\newtheorem*{tauk}{Corollary~\ref{tauk}}
\begin{tauk}
Suppose $K$ is an almost $L$-space knot for which ${|\tau(K)|< g_3(K)}$. Then $K$ is the figure eight knot.
\end{tauk}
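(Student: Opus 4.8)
The plan is to run through the classification in Theorem~\ref{infinityclassification} and compute both $\tau(K)$ and $g_3(K)$ directly from each model complex, thereby reducing the inequality $|\tau(K)|<g_3(K)$ to a single degenerate configuration. Recall first that $\tau$ is insensitive to acyclic summands: a box complex (Figure~\ref{boxfig}) is acyclic over $\F[U,U^{-1}]$ and contributes nothing to the filtered homology computing $\tau$, so for a complex of type~\ref{staircasebox} the value of $\tau(K)$ is read off from the staircase summand alone and equals $\pm g_s$, where $g_s$ denotes the top Alexander grading of that staircase (exactly as for $L$-space knots). On the other hand $g_3(K)$ is the top Alexander grading in which $\widehat{\HFK}(K)$ is supported, hence it is the maximum of the top Alexander gradings of the two summands.

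First I would treat type~\ref{staircasebox}. Since an almost $L$-space knot of this type contributes exactly one box summand, and the symmetry of $\CFK^\infty$ permutes indecomposable summands while preserving isomorphism type, the unique box must be carried to itself by the Alexander-grading reversal $s\mapsto -s$. A length-one box has generators in Alexander gradings $\{t+1,t,t,t-1\}$ for some $t$, and invariance of this multiset under negation forces $t=0$; hence the box occupies gradings $\{-1,0,1\}$ with top grading $1$, exactly as for the figure eight. Consequently $g_3(K)=\max(g_s,1)$. If $g_s\geq 1$ then $g_3(K)=g_s=|\tau(K)|$, contradicting the hypothesis; therefore $|\tau(K)|<g_3(K)$ forces $g_s=0$, so the staircase is a single generator, $\tau(K)=0$, and $g_3(K)=1$.

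Next I would dispose of type~\ref{almoststaircase}. A direct inspection of the almost staircase complexes of Figures~\ref{almoststaircase1fig} and~\ref{almoststaircase2fig} should show that the extreme Alexander gradings are occupied by the staircase-like backbone, so that the generator surviving to $\widehat{\HF}(S^3)$ sits at the top Alexander grading and $|\tau(K)|=g_3(K)$; such knots never satisfy the hypothesis. As a consistency check, the $(2,4g-3)$-cables of $L$-space knots realizing type~\ref{almoststaircase} satisfy $|\tau|=4g-2=g_3$. Combining the two cases, any almost $L$-space knot with $|\tau(K)|<g_3(K)$ has genus one, and Corollary~\ref{genus1class} then leaves only $T(2,-3)$, the figure eight knot, and the mirror of $5_2$. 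Since $|\tau|=g_3=1$ for the first and the third, only the figure eight remains.

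The main obstacle I anticipate is the type~\ref{almoststaircase} computation: unlike the box summand these complexes are not acyclic, so one must verify carefully from the explicit differentials that no cancellation pushes the filtration level of the surviving generator below the top Alexander grading, guaranteeing $|\tau|=g_3$ for every complex in that family. The self-symmetry argument pinning the box to gradings $\{-1,0,1\}$ in type~\ref{staircasebox} is the other delicate point, but it should be comparatively routine once the uniqueness of the box summand is in hand.
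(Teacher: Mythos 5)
Your proposal is correct and is essentially the paper's own argument: the paper proves Corollary~\ref{tauk} by exactly the two-step deduction you spell out, namely that Theorem~\ref{infinityclassification} forces $|\tau(K)|=g_3(K)$ unless the staircase summand is a single generator (so that $g_3(K)=1$ and $\tau(K)=0$), after which Corollary~\ref{genus1class} leaves only $T(2,-3)$, the figure eight knot, and the mirror of $5_2$, of which only the figure eight satisfies $|\tau|<g_3$. The type~\ref{almoststaircase} verification you flag as the main obstacle does go through as you anticipate: in both almost staircase models the vertical (hat-flavor) homology is generated by the unique surviving generator sitting at an extreme Alexander grading, so $|\tau|=g_3$ for every complex in that family.
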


Here $\tau(-)$ is a knot invariant due to Ozsv\'ath-Szab\'o~\cite{ozsvath2004holomorphic}. Note that ${|\tau(K)|\leq g_3(K)}$ for all $K$. This result is the analogue of the fact that for $L$-space knots $K$, ${\tau(K)=g_3(K)}$.
\begin{corollary}[\cite{baldwin2022characterizing}]
      The only almost $L$-space knots that are not strongly quasi-positive are the figure eight knot and the left handed trefoil.
\end{corollary}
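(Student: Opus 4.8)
The plan is to leverage the $\CFK^\infty$ classification in Theorem~\ref{infinityclassification} together with the structural corollaries already established, reducing the strong quasipositivity question to an examination of the short list of exceptional knots. The key invariant to track is $\tau(K)$ and its relationship to the Seifert genus $g_3(K)$: by a theorem of Hedden, a fibered knot $K$ is strongly quasipositive precisely when $\tau(K)=g_3(K)$, and more generally the inequality $\tau(K)=g_3(K)$ is a necessary condition for strong quasipositivity coming from the slice-Bennequin bound. So the first step is to read off $\tau(K)$ from the complex type. For almost $L$-space knots with the $\CFK^\infty$ type of a staircase-plus-box complex, the $\tau$ invariant is computed from the staircase summand (the box contributes only acyclic material to the relevant localized homology), so $\tau(K)=g_3(K)$ except when the staircase is trivial, which is the degenerate case corresponding to the figure eight. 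For the almost-staircase complexes of type $1$ and type $2$, one reads off from Figures~\ref{almoststaircase1fig} and~\ref{almoststaircase2fig} that $\tau(K)=g_3(K)$ as well, since these complexes have a genuine staircase ``skeleton'' realizing the top Alexander grading.

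Second, I would invoke Corollary~\ref{tauk}, which states that the only almost $L$-space knot with $|\tau(K)|<g_3(K)$ is the figure eight knot. This immediately isolates the figure eight as the unique almost $L$-space knot failing $|\tau(K)|=g_3(K)$. For every other almost $L$-space knot we then have $\tau(K)=\pm g_3(K)$. The sign of $\tau$ distinguishes a knot from its mirror, and strong quasipositivity forces the positive sign $\tau(K)=+g_3(K)$; the knots with $\tau(K)=-g_3(K)$ are the mirrors, which cannot be strongly quasipositive (their mirrors are, instead). The left-handed trefoil $T(2,-3)$ is precisely the mirror of the strongly quasipositive $T(2,3)$, and it appears on the genus-one list in Corollary~\ref{genus1class}.

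Third, I would combine this with the fibering information. By Corollary~\ref{fibered}, every almost $L$-space knot except the mirror of $5_2$ is fibered, so for all fibered almost $L$-space knots with $\tau(K)=g_3(K)$ Hedden's characterization gives strong quasipositivity outright. This handles all cases except the figure eight, the left-handed trefoil, and the mirror of $5_2$. The figure eight is amphichiral with $\tau=0<g_3=1$, so it is not strongly quasipositive. The left-handed trefoil has $\tau=-1=-g_3$, so it is not strongly quasipositive. It therefore remains only to check the mirror of $5_2$: here I would verify directly that $\tau(\overline{5_2})=g_3(\overline{5_2})=1$ and argue that, despite not being fibered, it is strongly quasipositive—either by exhibiting an explicit quasipositive Seifert surface or by appealing to the fact that $5_2$ and its mirror are twist knots whose quasipositivity status is classically known.

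The main obstacle I anticipate is the mirror of $5_2$, since it is the one non-fibered survivor and Hedden's fibered criterion does not apply to it. Establishing its strong quasipositivity requires either a hands-on braid or Seifert-surface argument, or citing the specific computation in Baldwin--Sivek~\cite{baldwin2022characterizing} where this knot is treated as an exceptional case. A secondary subtlety is making the genus and $\tau$ computations from the almost-staircase complexes fully rigorous—one must confirm that in every non-degenerate almost-staircase the top Alexander grading is realized with the correct Maslov grading so that $\tau=g_3$, rather than merely $|\tau|=g_3$; tracking the sign carefully through Figures~\ref{almoststaircase1fig} and~\ref{almoststaircase2fig} is where the bookkeeping is most delicate.
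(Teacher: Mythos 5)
Your overall strategy is essentially the paper's: combine the classification of Theorem~\ref{infinityclassification} with Hedden's criterion that a fibered knot is strongly quasipositive if and only if $\tau(K)=g_3(K)$, then dispose of the exceptional knots (figure eight, left-handed trefoil, mirror of $5_2$) by hand. The paper likewise simply cites the known strong quasipositivity of the mirror of $5_2$ rather than reproving it, so that step of yours is not a defect; and your reorganization of the case split via Corollary~\ref{tauk} and Corollary~\ref{fibered} (rather than the paper's split into genus one versus genus greater than one) is a harmless repackaging.

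There is, however, one genuine soft spot, and it is exactly the one you flag as ``delicate bookkeeping'' without resolving. You assert that every type $1$ and type $2$ almost staircase complex has $\tau(K)=g_3(K)$. As stated this is false: the left-handed trefoil is itself the $N=0$ case of a type $1$ almost staircase (see Definition~\ref{def:almoststaircase1}), and it has $\tau=-1=-g_3$, so your first paragraph contradicts your own later treatment of that knot. What your argument actually needs is that no almost $L$-space knot of genus at least two has $\tau=-g_3$; if one existed, it would be a further non-strongly-quasipositive example and the corollary would fail. This cannot be ``read off the figures,'' since the figures alone do not exclude the mirrored staircase direction; it follows from the Maslov grading computation in Lemma~\ref{grading11}, which is proved under the standing genus $>1$ hypothesis of Sections~\ref{maslov} and~\ref{chain}. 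There $H_*(\{i=0,j>1\})$ contains a class in Maslov grading $0$, and since the grading $d$ appearing in that lemma satisfies $d<0$ (by Ozsv\'ath--Szab\'o), the unique surviving generator of the vertical homology $H_*(\{i=0\})\cong\F_0$ must be that top class; this forces $\tau(K)=g_3(K)>0$ for every almost $L$-space knot of genus at least two. With that lemma cited your argument closes; without it, the step isolating the left-handed trefoil as the only knot with $\tau=-g_3$ is an assertion rather than a proof.
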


This is the analogue of the fact that $L$-space knots are strongly quasi-positive.

In a different direction, Theorem~\ref{infinityclassification} allows us to recover a result of Hedden:

\newtheorem*{Hedd}{Corollary~\ref{Hedd}}
\begin{Hedd}[\cite{hedden2007knot,hedden2011floer}]
    Suppose $K$ is a knot and $\rank(\widehat{\HFK}(K_n))=n+2$. Then $K$ is a non-trivial $L$-space knot and $n=2g(K)-1$.
\end{Hedd}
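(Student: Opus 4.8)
The plan is to reinterpret the rank hypothesis as an almost $L$-space condition on the satellite $K_n$ and then to run the classification of Theorem~\ref{infinityclassification} ``backwards'' to constrain the companion $K$. First I would recall Hedden's computation of $\widehat{\HFK}(K_n)$ in terms of the Floer-theoretic data of $K$ (the large-surgery complexes, or equivalently $\CFK^\infty(K)$): his formula bounds $\rank(\widehat{\HFK}(K_n))$ below by $n$, and the minimal value $n$ is attained exactly when $K$ is an $L$-space knot. The hypothesis $\rank(\widehat{\HFK}(K_n))=n+2$ therefore places us at the \emph{next} admissible value, the intermediate value $n+1$ being excluded because the relevant Euler characteristic fixes the parity, exactly as in the introduction's parity remark. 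Using the equivalent characterizations of almost $L$-space knots, I would package this as the statement that $K_n$ is an almost $L$-space knot (some large surgery on $K_n$ has $\widehat{\HF}$ of rank $|H_1|+2$).

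With $K_n$ identified as an almost $L$-space knot, I would apply Theorem~\ref{infinityclassification}: $\CFK^\infty(K_n)$ is filtered chain homotopy equivalent either to a staircase plus a box, or to an almost staircase (of type~1, the shape produced by cabling $L$-space knots). On the other hand $\CFK^\infty(K_n)$ is computed from $\CFK^\infty(K)$ by the cabling formula, so the two descriptions must agree. Both target shapes are extremely rigid, and matching either against the output of the cabling operation forces the ``staircase part'' to be precisely the image of $\CFK^\infty(K)$; this forces $\CFK^\infty(K)$ itself to be a staircase complex, which by Ozsv\'ath--Szab\'o is equivalent to $K$ being an $L$-space knot. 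The appearance of the extra box, respectively the almost-staircase defect, is possible only when the cabling parameter sits at its critical threshold, and tracking the Alexander (and Maslov) gradings through the cabling formula then yields $n=2g(K)-1$. Non-triviality is immediate: for the unknot every $K_n$ is itself an $L$-space knot, so $\rank(\widehat{\HFK}(K_n))=n\neq n+2$, and $K$ cannot be trivial.

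The main obstacle is the comparison step. One must understand the shape of $\CFK^\infty(K_n)$ precisely enough to see which summand of the cabling output carries the box or the almost-staircase defect, and to conclude that the defect cannot be absorbed by a \emph{non}-staircase companion complex; this is what upgrades ``$K$ is a cable companion of an almost $L$-space knot'' to the sharp statement ``$\CFK^\infty(K)$ is an honest staircase.'' Dovetailed with this is the bookkeeping needed to extract the exact value $n=2g(K)-1$ from the threshold condition and to dispose of the degenerate low-genus cases flagged after Theorem~\ref{infinityclassification}, where an almost staircase can coincide with a staircase-plus-box; establishing Hedden's lower bound $\rank(\widehat{\HFK}(K_n))\ge n$ with its characterization of equality is the remaining technical input, which I would cite rather than reprove.
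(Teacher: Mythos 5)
There is a fatal misreading at the root of your argument: $K_n$ denotes the \emph{core of $n$-surgery} on $K$, i.e.\ a knot in the surgered manifold $S^3_n(K)$, not a satellite or cable of $K$ in $S^3$ (the paper says this explicitly right after the statement). Consequently every stage of your comparison collapses. $K_n$ cannot be ``an almost $L$-space knot'' in the paper's sense, since that notion is defined only for knots in $S^3$, and in any case the hypothesis concerns $\widehat{\HFK}$ of the core, not $\widehat{\HF}$ of surgeries on it; Theorem~\ref{infinityclassification} cannot be applied to $\CFK^\infty(K_n)$ for the same reason; and the cabling formula does not compute $\CFK^\infty(K_n)$ from $\CFK^\infty(K)$, because $K_n$ is not a cable of $K$. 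The invariant $\widehat{\HFK}(K_n)$ must instead be computed by a dual-knot formula: in the paper this is done with immersed curves, as the count of intersections of $\gamma_K$ with a line of slope $n$ passing through the puncture.

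What the hypothesis actually yields is the following. Since $\widehat{\HFK}(K_n)$ is the homology of the associated graded of a filtration of $\widehat{\CF}(S^3_n(K))$, one has $\rank(\widehat{\HFK}(K_n))\geq\rank(\widehat{\HF}(S^3_n(K)))$, so by parity $S^3_n(K)$ --- the ambient manifold of $K_n$, not some ``large surgery on $K_n$'' --- is an $L$-space or an almost $L$-space. This alone does not make $K$ an $L$-space knot or almost $L$-space knot, because those definitions require the surgery coefficient to be at least $2g(K)-1$, and $n$ is a priori unconstrained; recall that for an $L$-space knot, $S^3_{2g(K)-2}(K)$ is an almost $L$-space. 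The paper therefore runs a trichotomy on $K$ itself, via Proposition~\ref{Hanselmanrank}: if $K$ is neither an $L$-space knot nor an almost $L$-space knot, the bound above already gives $\rank(\widehat{\HFK}(K_n))\geq\rank(\widehat{\HF}(S^3_n(K)))>n+2$; if $K$ is an almost $L$-space knot, Theorem~\ref{infinityclassification} (applied to $K$, not to $K_n$) determines the immersed curve of $K$, which in pegboard position has a local maximum just above the puncture, forcing the strict inequality $\rank(\widehat{\HFK}(K_n))>\rank(\widehat{\HF}(S^3_n(K)))\geq n+2$; only when $K$ is an $L$-space knot can equality hold, and the known immersed curves of $L$-space knots then give $\rank(\widehat{\HFK}(K_n))=n+2$ exactly for non-trivial $K$ and $n=2g(K)-1$. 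If you want to repair your outline, replace ``$K_n$ is an almost $L$-space knot'' by this trichotomy on $K$, and replace the cabling formula by the immersed-curve computation of the dual knot's $\widehat{\HFK}$; the rigidity/comparison step you flag as the main obstacle then disappears, since Theorem~\ref{infinityclassification} is never applied to $K_n$ at all.
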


Here $K_n$ indicates the core of $n$-surgery on $K$. We again prove this with the aid of Hanselman-Rasmussen-Watson's immersed curve invariant. From here we can obtain some link detection results for cables of $L$-space knots using the same arguments as were used by the author and Dey for similar purposes in~\cite{binns2022cable}. Let $K_{p,q}$ denote the $(p,q)$-cable of $K$. Here $p$ indicates the longitudinal wrapping number while $q$ indicates the meridional wrapping number. We have the following:

\newtheorem*{HFKcables}{Proposition~\ref{HFKcables}}
\begin{HFKcables}

      Suppose $K$ is an $L$-space knot. If a $2$-component link $L$ satisfies $\widehat{\HFK}(L)\cong\widehat{\HFK}(K_{2,4g(K)-2})$. Then $L$ is a $(2,4g(K)-2)$-cable of an $L$-space knot $K'$ such that $\widehat{\HFK}(K')\cong\widehat{\HFK}(K)$.
 
\end{HFKcables}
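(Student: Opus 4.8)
The plan is to extract from the isomorphism $\widehat{\HFK}(L)\cong\widehat{\HFK}(K_{2,4g(K)-2})$ enough classical and Floer-theoretic data to pin down the geometry of $L$, following the template of \cite{binns2022cable}. First I would record the invariants that this graded isomorphism determines. The graded Euler characteristic recovers the (multivariable) Alexander polynomial of $L$, and in particular forces the linking number of the two components of $L$ to equal the framing coefficient $2g(K)-1$ of the model cable. Since $\widehat{\HFK}$ detects the Thurston norm of a link, $L$ has the same Seifert genus as $K_{2,4g(K)-2}$; and since the $L$-space knot $K$ is fibered, the cable $K_{2,4g(K)-2}$ is a fibered link, so by Ni's theorem that link Floer homology detects fiberedness, $L$ is fibered with fiber of the same Euler characteristic.

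Next I would analyze the two components individually. Using the Ozsv\'ath--Szab\'o relationship between the link Floer homology of $L$ and that of its sublinks---summing $\widehat{\HFK}(L)$ over the Alexander grading of one component recovers $\widehat{\HFK}$ of the other component tensored with $V\cong\F^2$, up to a grading shift determined by the linking number---I would conclude that each component $L_i$ satisfies $\widehat{\HFK}(L_i)\cong\widehat{\HFK}(K)$. Because the $\widehat{\HFK}$ type of an $L$-space knot is an intrinsic condition on the bigraded group (rank at most one in each Alexander grading, with the staircase Maslov/sign pattern), each $L_i$ is itself an $L$-space knot, of genus $g(K)$, with $\widehat{\HFK}(L_i)\cong\widehat{\HFK}(K)$. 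This already produces the candidate companion $K'=L_i$ with the advertised knot Floer homology.

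It then remains to show that $L$ is genuinely the $(2,4g(K)-2)$-cable of $K'$, rather than merely a fibered two-component link sharing all of these invariants. Here I would argue as in \cite{binns2022cable}: the fibration of $L$, together with the fact that its two components cobound an essential annulus detected by the genus and linking-number data, exhibits the complement of $L$ as a satellite with companion the $L$-space knot $K'$ and cabling pattern forced by the computed framing. Matching cabling parameters then finishes the identification: the two-component hypothesis forces $p=2$, while framing $2g(K)-1$ together with $g(K')=g(K)$ forces $q=2(2g(K)-1)=4g(K)-2$, so $L\cong K'_{2,4g(K)-2}$ with $K'$ an $L$-space knot and $\widehat{\HFK}(K')\cong\widehat{\HFK}(K)$.

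The main obstacle is this final geometric step: converting the coincidence of classical and Floer-theoretic invariants into an actual cabling structure. Matching genus, fiberedness, linking number, and sublink Floer homology is necessary but not manifestly sufficient, so the crux is establishing the essential cabling annulus in the complement of $L$ and ruling out other fibered links realizing the same data. This is precisely the point at which the rigidity arguments for fibered link complements from \cite{binns2022cable} must be adapted, as no purely algebraic input replaces the satellite-structure analysis.
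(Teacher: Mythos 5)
There is a genuine gap, and it sits exactly where your proposal needs to do the most work. First, your step of ``summing $\widehat{\HFK}(L)$ over the Alexander grading of one component'' to recover $\widehat{\HFK}(L_i)\otimes V$ is not available here: the hypothesis only gives the \emph{collapsed} invariant $\widehat{\HFK}(L)$, which carries a single Alexander grading, so there is no ``Alexander grading of one component'' to sum over. That operation requires the multigraded $\widehat{\HFL}(L)$ (this is precisely why the paper states Proposition~\ref{HFLcables} separately and calls it stronger). Even if one upgrades the hypothesis to $\widehat{\HFL}$, the sublink invariant $\widehat{\HFK}(L_i)$ is the abutment of a spectral sequence whose $E_1$ page is $\widehat{\HFL}(L)$; an isomorphism of $E_1$ pages does not determine the abutment (only rank inequalities), so ``each component satisfies $\widehat{\HFK}(L_i)\cong\widehat{\HFK}(K)$'' does not follow. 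The same objection applies to reading off the multivariable Alexander polynomial and the linking number from the collapsed graded Euler characteristic. Since your candidate companion $K'$ is produced exactly by this step, the argument has no companion knot to work with.

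Second, you concede that the conversion of numerical coincidences into an actual cabling annulus is unproved and must be ``adapted'' from \cite{binns2022cable}; but that adaptation, together with an ingredient you never invoke, \emph{is} the paper's proof. The paper runs the sutured Floer homology argument of \cite{binns2022cable} to establish the cable structure $L=K'_{2,4g(K)-2}$ and, crucially, to identify an extremal piece of the Floer homology with $\widehat{\HFK}(K'_n)$, the invariant of the core of $n$-surgery on the companion, $n=2g(K)-1$; it then applies Corollary~\ref{Hedd} (if $\rank(\widehat{\HFK}(K'_n))=n+2$ then $K'$ is a non-trivial $L$-space knot with $n=2g(K')-1$) to pin down $K'$. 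So the companion is identified through the surgery core and Hedden's characterization, not through the components of $L$, and Corollary~\ref{Hedd} --- the paper's key new input, reproved there via immersed curves --- appears nowhere in your proposal. Your observation that having the $\widehat{\HFK}$ type of an $L$-space knot is an intrinsic condition (Lidman--Moore--Zibrowius) is fine as far as it goes, but it only helps after $\widehat{\HFK}(K')$ has been determined, which is the step your argument cannot supply.
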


Here we orient $(2,2n)$-cables as the boundary of annuli. Moreover we have a stronger result for \emph{link Floer homology}, an invariant due to Ozsv\'ath-Szab\'o~\cite{HolomorphicdiskslinkinvariantsandthemultivariableAlexanderpolynomial}:

\newtheorem*{HFLcables}{Proposition~\ref{HFLcables}}
\begin{HFLcables}
    
    Suppose $K$ is a $L$-space knot and $L$ satisfies $\widehat{\HFL}(L)\cong\widehat{\HFL}(K_{m,2mg(K)-m})$. Then $L$ is a $(m,2mg(K)-m)$-cable of an $L$-space knot $K'$ with $\widehat{\HFK}(K')\cong\widehat{\HFK}(K)$.
\end{HFLcables}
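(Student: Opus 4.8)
The plan is to follow the strategy of Binns--Dey~\cite{binns2022cable}: first pin down enough of the topology of any link $L$ with $\widehat{\HFL}(L)\cong\widehat{\HFL}(K_{m,2mg(K)-m})$ to recognize it as a cable, and then recover the companion from its link Floer homology. Write $L_0 = K_{m,2mg(K)-m}$. Since $2mg(K)-m = m(2g(K)-1)$ we have $\gcd(m,2mg(K)-m)=m$, so $L_0$ is an $m$-component link consisting of $m$ parallel copies of $K$, each pushed off with framing $2g(K)-1$; in particular the pairwise linking numbers are all $2g(K)-1$ and the cabling slope is exactly $2g(K)-1$. I would begin by describing the relevant features of $\widehat{\HFL}(L_0)$. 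Because $K$ is an $L$-space knot, $\CFK^\infty(K)$ is a staircase by Ozsv\'ath--Szab\'o~\cite{ozsvath2005knot}, and Hedden's computations of the knot Floer homology of cables~\cite{hedden2007knot}, together with the relationship between the multicomponent cable link and the associated cable knot, let one read off the $\Z^m$-graded support of $\widehat{\HFL}(L_0)$, its Thurston-norm-extremal summands, its total rank, and its graded Euler characteristic, which is the multivariable Alexander polynomial of $L_0$.

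With this model in hand I would extract topological data about $L$ from the isomorphism $\widehat{\HFL}(L)\cong\widehat{\HFL}(L_0)$. Since the isomorphism respects the $\Z^m$ Alexander multigrading, $L$ has $m$ components. By Ozsv\'ath--Szab\'o's theorem that link Floer homology detects the Thurston norm (a consequence of the link invariant of~\cite{HolomorphicdiskslinkinvariantsandthemultivariableAlexanderpolynomial}), the complements $S^3\setminus L$ and $S^3\setminus L_0$ have the same Thurston norm; and since $L_0$ is fibered (being a cable of the fibered knot $K$ by a fibered pattern), Ni's fiberedness detection shows $L$ is fibered as well. Taking graded Euler characteristics recovers the multivariable Alexander polynomial, and hence the pairwise linking numbers (all equal to $2g(K)-1$) and the Seifert data of the components. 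Thus $L$ is a fibered $m$-component link whose fiber surface, Thurston norm, and Alexander polynomial all coincide with those of the cable link $L_0$.

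The geometric heart of the argument is to promote this collection of invariants to the statement that $L$ is genuinely a cable link. Following~\cite{binns2022cable}, I would argue that the fiber structure together with the coincidence of all pairwise linking numbers forces the components of $L$ to be mutually parallel, so that they cobound the cabling annuli and $L$ is the $(m,2mg(K)-m)$-cable of some knot $K'$; an induction on $m$, deleting one component at a time and invoking the corresponding statement for sublinks, is a natural way to organize this step. Finally I would identify the companion: the cabling relationship lets one read $\widehat{\HFK}(K')$ off of $\widehat{\HFL}(L)$ and match it with $\widehat{\HFK}(K)$, while converting the resulting rank constraint on $K'$ into a statement about the core of a Dehn surgery and applying Corollary~\ref{Hedd} (proved via the Hanselman--Rasmussen--Watson immersed-curve invariant~\cite{hanselman2016bordered}) forces $K'$ to be an $L$-space knot of genus $g(K)$, so that the cabling slope $2g(K)-1$ and the parameters $(m,2mg(K)-m)$ are indeed correct.

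I expect the geometric identification of Step three and the companion analysis of Step four to be the main obstacle. The link Floer homology determines the Thurston norm, fiberedness, and the multivariable Alexander polynomial, but upgrading this algebraic data to the global cable structure requires genuinely geometric input, namely locating the cabling annuli and the companion solid torus; and the rigidity that forces $K'$ to be an $L$-space knot with $\widehat{\HFK}(K')\cong\widehat{\HFK}(K)$ relies on the immersed-curve machinery underlying Corollary~\ref{Hedd} rather than on formal properties of $\widehat{\HFL}$ alone. The knot-level precedent in Proposition~\ref{HFKcables} handles $m=2$ via $\widehat{\HFK}$, and the present approach should be viewed as the refinement in which the full multigraded invariant $\widehat{\HFL}$ supplies the extra information needed to treat arbitrary $m$.
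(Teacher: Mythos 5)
Your proposal is correct and takes essentially the same approach as the paper: the paper's proof is precisely the two-step argument you describe, namely running the argument of \cite[Theorem 3.1]{binns2022cable} to exhibit $L$ as the relevant cable of some knot $K'$ satisfying the rank and Alexander-polynomial constraints, and then applying Corollary~\ref{Hedd} (proved via the immersed-curve machinery) to force $K'$ to be an $L$-space knot with $\widehat{\HFK}(K')\cong\widehat{\HFK}(K)$. The paper compresses this into a one-line citation, so your expanded account of the Binns--Dey steps is just the content of that citation made explicit.
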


In fact, the conclusion that $K'$ is an $L$-space knot in the above two Propositions follows from the fact that $\widehat{\HFK}(K)\cong\widehat{\HFK}(K')$, as is also true for the corresponding results in~\cite{binns2022cable}. This observation follows either from examining the immersed curve invariant or a result of Lidman-Moore-Zibrowius~\cite[Lemma 2.7]{lidman2020space}.

As a corollary we have the following detection results:
\begin{corollary}
    Knot Floer homology detects $T(2,3)_{2,2}$ and $T(2,5)_{2,6}$ amongst two component links.
\end{corollary}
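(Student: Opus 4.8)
The plan is to reduce each link-detection statement to a knot-detection statement via Proposition~\ref{HFKcables}, and then to invoke the fact that knot Floer homology detects $T(2,3)$ and $T(2,5)$ among knots in $S^3$. The first thing to check is that both target links are of the shape $K_{2,4g(K)-2}$ for an $L$-space knot $K$, so that Proposition~\ref{HFKcables} is applicable. Both $T(2,3)$ and $T(2,5)$ are $L$-space knots, being positive torus knots. For the trefoil, $g(T(2,3))=1$, so $4g-2=2$ and $T(2,3)_{2,2}=K_{2,4g(K)-2}$ with $K=T(2,3)$; for $T(2,5)$, $g(T(2,5))=2$, so $4g-2=6$ and $T(2,5)_{2,6}=K_{2,4g(K)-2}$ with $K=T(2,5)$. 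This parameter bookkeeping is routine but must be recorded, since it is exactly what licenses the application of Proposition~\ref{HFKcables}.

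I would carry out the trefoil case first. Suppose $L$ is a two-component link with $\widehat{\HFK}(L)\cong\widehat{\HFK}(T(2,3)_{2,2})$. By Proposition~\ref{HFKcables}, $L$ is the $(2,2)$-cable of an $L$-space knot $K'$ with $\widehat{\HFK}(K')\cong\widehat{\HFK}(T(2,3))$. It is classical that knot Floer homology detects the trefoil: $\widehat{\HFK}$ detects the Seifert genus and fibredness, which pins $K'$ down to a genus-one fibered knot, and the Maslov gradings (equivalently $\tau$) then isolate $T(2,3)$ among such knots. Hence $K'=T(2,3)$ and $L=T(2,3)_{2,2}$. Since the orientation convention on $(2,2)$-cables as boundaries of annuli is the same in the hypothesis and the conclusion, no ambiguity in identifying $L$ arises.

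The $T(2,5)$ case has an identical skeleton: if $\widehat{\HFK}(L)\cong\widehat{\HFK}(T(2,5)_{2,6})$, then Proposition~\ref{HFKcables} gives $L=K'_{2,6}$ for an $L$-space knot $K'$ with $\widehat{\HFK}(K')\cong\widehat{\HFK}(T(2,5))$, and a $\widehat{\HFK}$-detection result for $T(2,5)$ among knots forces $K'=T(2,5)$ and hence $L=T(2,5)_{2,6}$. The main obstacle, and the only genuinely external input, is this last ingredient. I expect to supply it either by citing an existing detection result for $T(2,5)$, or by arguing directly that an $L$-space knot whose $\widehat{\HFK}$ matches that of $T(2,5)$—in particular a fibered genus-two knot with Alexander polynomial $t^2-t+1-t^{-1}+t^{-2}$—must be $T(2,5)$. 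Everything else, namely the cabling reduction and the genus/parameter computations, follows immediately once Proposition~\ref{HFKcables} is in hand.
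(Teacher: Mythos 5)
Your proof is correct and takes exactly the paper's route: reduce via Proposition~\ref{HFKcables} to a knot detection statement (after the same genus bookkeeping, $4g-2=2$ for $T(2,3)$ and $4g-2=6$ for $T(2,5)$), then invoke that $\widehat{\HFK}$ detects $T(2,3)$ (Ghiggini) and $T(2,5)$ (Farber--Reinoso--Wang), which is precisely what the paper cites. One caution about your hedge on the last ingredient: the fallback of ``arguing directly'' that an $L$-space knot whose $\widehat{\HFK}$ matches that of $T(2,5)$ must be $T(2,5)$ is not a realistic alternative --- any genus-two $L$-space knot automatically has that same staircase $\widehat{\HFK}$, so your claim is equivalent to the statement that $T(2,5)$ is the unique genus-two $L$-space knot, which was open until the Farber--Reinoso--Wang theorem and whose proof rests on fixed-point theory for pseudo-Anosov monodromies rather than any Floer-theoretic computation; citing that result is the only available route, and it is the one the paper takes.
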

Here these two links are again oriented as the boundary of annuli. The corresponding result for Link Floer homology is the following:
\begin{corollary}
Link Floer homology detects  $T(2,3)_{m,2m}$ and $T(2,5)_{m,6m}$.
\end{corollary}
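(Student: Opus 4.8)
The plan is to derive this corollary from Proposition~\ref{HFLcables} in exactly the way the preceding knot Floer detection corollary is derived from Proposition~\ref{HFKcables}. First I would record that $T(2,3)$ and $T(2,5)$ are $L$-space knots, since all positive torus knots admit positive $L$-space surgeries, and note their genera $g(T(2,3))=1$ and $g(T(2,5))=2$. Both families in the statement are then cables of the form $K_{m,2mg(K)-m}$ with $K$ an $L$-space knot, so Proposition~\ref{HFLcables} applies directly; the cable $K_{m,2mg(K)-m}=K_{m,m(2g(K)-1)}$ has $\gcd(m,m(2g(K)-1))=m$ components, consistent with viewing these as genuine multi-component links detected by $\widehat{\HFL}$.

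The main step is then essentially formal. Suppose $L$ is a link with $\widehat{\HFL}(L)\cong\widehat{\HFL}(K_{m,2mg(K)-m})$ for $K\in\{T(2,3),T(2,5)\}$. Since $\widehat{\HFL}$ records the number of link components, $L$ automatically has the same component count as the model cable. Proposition~\ref{HFLcables} then produces an $L$-space knot $K'$ with $\widehat{\HFK}(K')\cong\widehat{\HFK}(K)$ for which $L$ is the cable $K'_{m,2mg(K)-m}$. Hence it suffices to upgrade the isomorphism $\widehat{\HFK}(K')\cong\widehat{\HFK}(K)$ to an identification $K'\cong K$: once that is established, $L$ is forced to be the named cable and the detection statement follows.

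This upgrade is where the argument must do real work, since for an $L$-space knot $\widehat{\HFK}$ recovers only the Alexander polynomial, and hence the staircase $\CFK^\infty$, rather than the knot itself. For $K=T(2,3)$ the conclusion is immediate: $\widehat{\HFK}(T(2,3))$ has total rank three, and the knots with knot Floer homology of total rank three are exactly the two trefoils, of which only the right-handed one $T(2,3)$ is an $L$-space knot; this pins down $K'=T(2,3)$. For $K=T(2,5)$ I would instead observe that $K'$ is a genus-two $L$-space knot with Alexander polynomial $\Delta_{T(2,5)}(t)=t^{2}-t+1-t^{-1}+t^{-2}$, and then invoke that $T(2,5)$ is the unique $L$-space knot realising this Alexander polynomial, so again $K'\cong K$.

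The one genuinely nontrivial input, and the main obstacle, is this uniqueness statement for $T(2,5)$: it is equivalent to the knot Floer detection of $T(2,5)$, and unlike the rank-three trefoil case it cannot be read off from the total rank of $\widehat{\HFK}$ alone, so it must be supplied as an external detection result. By contrast, the trefoil case and the bookkeeping of component numbers and cabling parameters are routine. I would therefore structure the write-up so that the corollary is reduced, via Proposition~\ref{HFLcables}, to the two knot-level detection facts, isolating the $T(2,5)$ detection as the crucial ingredient.
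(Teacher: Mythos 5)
Your proposal is correct and takes essentially the same route as the paper: the paper deduces the corollary directly from Proposition~\ref{HFLcables} together with Ghiggini's theorem that knot Floer homology detects $T(2,3)$ and Farber--Reinoso--Wang's theorem that it detects $T(2,5)$, which is exactly your reduction (your rank-three argument for the trefoil is precisely Ghiggini's detection result, and the $T(2,5)$ uniqueness you isolate as the crucial external input is the Farber--Reinoso--Wang theorem the paper cites). One caveat, inherited from the paper rather than introduced by you: the cabling parameters in the corollary ($T(2,3)_{m,2m}$ and $T(2,5)_{m,6m}$) do not literally agree with the $(m,\,2mg(K)-m)$ form of Proposition~\ref{HFLcables}, which would give $T(2,3)_{m,m}$ and $T(2,5)_{m,3m}$; this appears to be a typographical inconsistency in the paper, and both your write-up and the paper's proof pass over it silently.
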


Note that $T(2,3)$ and $T(2,5)$ are $L$-space knots. The above two results thus follow directly from Propositions~\ref{HFKcables} and ~\ref{HFLcables}, Ghiggini's result that knot Floer homology detects $T(2,3)$~\cite{ghiggini2008knot}, and Farber-Reinoso-Wang's result that knot Floer homology detects $T(2,5)$~\cite{farber2022fixed}.

We also give a characterization of links with the same link Floer homology type as the $(m,mn)$-cables of almost $L$-space knots for large $n$, generalizing work of the author and Dey~\cite[Theorem 3.1]{binns2022cable}:

\newtheorem*{almostLspacecables}{Proposition~\ref{almostLspacecables}}
\begin{almostLspacecables}
  Let $K$ be an almost $L$-space knot.  Suppose $L$ is a link such that $\widehat{\HFL}(L)\cong \widehat{\HFL}(K_{m,mn})$ with $\change{n}>2g(K)-1$. Then $L$ is the $(m,mn)$-cable of an almost $L$-space knot $K'$ such that $\widehat{\HFK}(K')\cong\widehat{\HFK}(K)$.
\end{almostLspacecables}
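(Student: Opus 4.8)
The plan is to reduce to the $L$-space cable detection argument of the author and Dey~\cite[Theorem 3.1]{binns2022cable}, substituting Theorem~\ref{infinityclassification} for the Ozsv\'ath--Szab\'o classification of $L$-space knot complexes at the points where the companion's Floer homology enters. Since $\gcd(m,mn)=m$, the cable $K_{m,mn}$ is an $m$-component link consisting of $m$ parallel copies of $K$ with framing $n$; in particular each component is isotopic to $K$ and any two components have linking number $n$. From $\widehat{\HFL}(L)\cong\widehat{\HFL}(K_{m,mn})$ I would first extract the invariants that link Floer homology is known to determine: the number of components, the multivariable Alexander polynomial (as the graded Euler characteristic), the pairwise linking numbers, and the Thurston norm of the complement together with the detection of which top faces are fibered, via the results of Ozsv\'ath--Szab\'o and Ni.

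The crux, and the step I expect to be hardest, is to promote this homological data to the geometric conclusion that $L$ is a genuine $(m,mn)$-cable. Following Binns--Dey, the hypothesis $n>2g(K)-1$ makes the linking number large relative to $g(K)$, so that the detected Thurston norm and Euler characteristic force an essential torus in the complement of $L$ splitting it as a companion solid torus glued to the $(m,mn)$-cable space, whence $L=K'_{m,mn}$ for some knot $K'$. Two points require care beyond the $L$-space setting. First, $\widehat{\HFK}(K)$ is now that of a box-plus-staircase or almost-staircase complex from Theorem~\ref{infinityclassification} rather than a staircase, so I would recompute $\widehat{\HFL}(K_{m,mn})$ from $\widehat{\HFK}(K)$ by the cabling formula and check that the extra summands do not disturb the top Alexander gradings that locate the splitting torus. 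Second, since the mirror of $5_2$ is a non-fibered almost $L$-space knot (see Corollary~\ref{fibered}), the fiberedness input is unavailable for that companion, so this case must be carried by the Thurston-norm data alone or treated separately.

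Granting $L=K'_{m,mn}$, I would then identify $\widehat{\HFK}(K')$. Applying the cabling formula in reverse and using once more that $n>2g(K)-1$, the contributions of distinct Alexander gradings of $K'$ to $\widehat{\HFL}(K'_{m,mn})$ occur in disjoint ranges, so the isomorphism $\widehat{\HFL}(L)\cong\widehat{\HFL}(K_{m,mn})$ can be read grading by grading to give $\widehat{\HFK}(K')\cong\widehat{\HFK}(K)$.

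It remains to see that $K'$ is an almost $L$-space knot. As in the remark following Proposition~\ref{HFLcables}, the almost $L$-space property is recoverable from $\widehat{\HFK}$ alone---either from the immersed-curve invariant of Hanselman--Rasmussen--Watson or from~\cite[Lemma 2.7]{lidman2020space}---so $\widehat{\HFK}(K')\cong\widehat{\HFK}(K)$ together with $K$ being almost $L$-space forces $K'$ to be almost $L$-space. This completes the argument.
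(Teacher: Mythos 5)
Your opening move coincides with the paper's: run the Binns--Dey argument from~\cite[Theorem 3.1]{binns2022cable} to obtain that $L=K'_{m,mn}$ for some knot $K'$. After that, however, your two remaining steps each contain a genuine gap, and they attempt to resolve the two conclusions in the opposite order from the one that actually works. The first gap is your claim that the isomorphism $\widehat{\HFL}(L)\cong\widehat{\HFL}(K_{m,mn})$ can be ``read grading by grading'' to give $\widehat{\HFK}(K')\cong\widehat{\HFK}(K)$. The cabling formula for $\widehat{\HFL}(K'_{m,mn})$, even for large $n$, is not a formula in $\widehat{\HFK}(K')$: it is governed by the filtered chain homotopy type of $\CFK^\infty(K')$ --- concretely, by the homologies of its subquotient complexes, equivalently by the knot Floer homology of the core $K'_n$ of $n$-surgery, equivalently by the immersed curve of $K'$. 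Two knots with isomorphic $\widehat{\HFK}$ but different differentials on $\CFK^\infty$ have cables with different link Floer homology, so the formula cannot be inverted to recover $\widehat{\HFK}(K')$ without first knowing the shape of $\CFK^\infty(K')$ --- which is precisely what is unknown at this stage. What the Binns--Dey argument actually hands you is surgery-core data: in the paper's version, $\rank(\widehat{\HFK}(K'_n))=n+4$ together with $\Delta_{K'}(t)=\Delta_K(t)$.

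The second gap is your final step, which asserts that the almost $L$-space property is recoverable from $\widehat{\HFK}$ alone, by analogy with the remark following Proposition~\ref{HFLcables}. That remark concerns $L$-space knots, where~\cite[Lemma 2.7]{lidman2020space} (or a staircase-forcing argument) shows that the bigraded $\widehat{\HFK}$ forces the staircase complex; no analogous forcing statement for almost $L$-space knots is proved in this paper or cited, and your immersed-curve alternative fails for the same reason as the first gap, since the immersed curve is equivalent to $\CFK^\infty(K')$, not to $\widehat{\HFK}(K')$. The paper's route supplies the three ingredients your proposal is missing, in the reverse order of your plan: (i) from $\rank(\widehat{\HFK}(K'_n))=n+4$ and Hanselman's rank formula (Proposition~\ref{Hanselmanrank}), a case analysis on the singular pegboard diagram of $K'$ shows $K'$ is either an $L$-space knot or an almost $L$-space knot; (ii) the $L$-space alternative forces $n=2g(K')-2$, which is excluded because Thurston-norm detection plus the work of Gabai, Neumann--Rudolph and Rudolph gives $g(K')=g(K)$, contradicting $n>2g(K)-1$; (iii) only once $K'$ is known to be an almost $L$-space knot does one invoke Theorem~\ref{infinityclassification} together with Proposition~\ref{prop:delta0}, which show that the Alexander polynomial of an almost $L$-space knot determines its $\widehat{\HFK}$, so that $\Delta_{K'}=\Delta_K$ yields $\widehat{\HFK}(K')\cong\widehat{\HFK}(K)$. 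In short: $\widehat{\HFK}$ equality is a consequence of almost-$L$-space-ness here, not a route to it.
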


By combining the above result and Corollary~\ref{genus1class} we can also deduce the following:

\newtheorem*{genus1almostLspacecables}{Proposition~\ref{genus1almostLspacecables}}
\begin{genus1almostLspacecables}
    Link Floer homology detects the $(m,mn)$-cables of $T(2,-3)$, the figure eight knot and the mirror of $5_2$ for $n>1$.
\end{genus1almostLspacecables}

We conclude this introduction with some questions:

\begin{question}
Is there a version of the $L$-space conjecture for almost $L$-spaces?
\end{question}

The $L$-space conjecture posits an equivalent characterization of $L$-spaces in terms of both an orderability condition on their fundamental groups, and whether or not they admit taut foliations~\cite{boyer2013spaces}.

Given a property that $L$-space knots exhibit, it is natural to ask if almost $L$-space knots too exhibit that property. For example, $L$-space knots do not have essential Conway spheres by a result of Lidman-Moore-Zibrowius~\cite{lidman2020space}, so it is natural to ask the following:

\begin{question}
    Do almost $L$-space knots have essential Conway spheres?
\end{question}

This paper is organised as follows. In Section~\ref{review} we review aspects of the Heegaard Floer homology package relevant to subsequent sections. In Section~\ref{remarkssection} we give some alternate characterizations of almost $L$-spaces and prove Proposition~\ref{stability} and Proposition~\ref{prop:delta0}. In Sections~\ref{nomaslov}, \ref{maslov}, and~\ref{chain} we prove Theorem~\ref{infinityclassification}. In Section~\ref{applications} we give the applications.

\subsection*{\change{Acknowledgments}}
The author would like to thank his advisor John Baldwin for numerous helpful conversations as well as Jen Hom, Mihai Marian, Allison Moore and Liam Watson. He would like to thank Subhankar Dey and Duncan McCoy for some useful comments on an earlier draft. He is especially grateful to Hugo Zhou for explaining how to understand a key step in Section~\ref{chain}, as well as Francesco Lin for drawing his attention to Hanselman-Lidman-Kutluhan's result~\cite{hanselman2019remark}, which lead him to the proof of Proposition~\ref{prop:delta0}. \change{Finally, he would like to thank the referee for their patience and helpful feedback which have substantially improved this paper.}

\section{A Review of Heegaard Floer Homology}\label{review}

In this section we review background material on Heegaard Floer homology with an emphasis on aspects of the theory which will play a role in subsequent sections.

\subsection{Heegaard Floer homology for closed $3$-manifolds}

 \emph{Heegaard Floer homology}, denoted $\widehat{\HF}(-)$, is an invariant of three manifolds due to Ozsv\'ath-Szab\'o. It is defined using Heegaard diagrams, symplectic topology and analysis~\cite{ozsvath2004holomorphic}. It can be defined with integer coefficients but we will take coefficients in $\change{\F:=\Z/2}$ throughout this paper. $\widehat{\HF}(Y)$ splits as a direct sum over equivalence classes of non-vanishing vector fields on $Y$ called \emph{$\spin^c$-structures}. Let $Y$ be a rational homology sphere. There is a non-canonical bijection between the set of $\spin^c$-structures on $Y$ and $H_1(Y;\Z)$. Suppose $Y$ is given by performing $n$-surgery on a knot $K$ in $S^3$. Then $K$ determines a canonical bijection between $\spin^c(Y)$ and $\Z/n\cong H_1(Y;\Z)$ as follows. Observe that the trace of $n$\change{-}surgery, $X_n(K)$, gives a cobordism from $S^3_n(K)$ to $S^3$. Fix a Seifert surface $\Sigma$ for $K$. Consider the surface $\widehat{\Sigma}$ obtained by capping off $\Sigma$ in $X_n(K)$. Suppose $\mathfrak{s}$ is a $\spin^c$ structure over $S^3_n(K)$ that admits an extension $\mathfrak{t}$ over $X_n(K)$ with the property that $\langle c_1(\mathfrak{t}),[\widehat{\Sigma}]\rangle -n\equiv 2i\mod{2n}$. Then the map $\mathfrak{s}\mapsto i$ is an isomorphism by a result of Ozsv\'ath-Szab\'o~\cite[Lemma 2.2]{ozsvath2008knot}.

 \subsection{Knot Floer homology}
 There is a version of Heegaard Floer homology for knots in $S^3$ called \emph{knot Floer homology}. This invariant is due independently to Ozsv\'ath-Szab\'o~\cite{Holomorphicdisksandknotinvariants} and J. Rasmussen~\cite{Rasmussen}. The strongest version of knot Floer homology is denoted $\CFK^\infty(-)$. This form of the invariant takes value in the category of finitely generated filtered chain complexes of $\F[U,U^{-1}]$-modules up to filtered chain homotopy equivalence. We will often conflate the $\CFK^\infty$-type of such a complex and the associated graded \change{equipped} with the induced differential. These complexes carry a grading called the \emph{Maslov grading}. The $U$ action lowers the Maslov grading by $2$, and induces another grading on $\CFK^\infty(-)$, called the $U$ grading. The subcomplex generated by generators of $U$-grading $0$ is denoted $\widehat{\CFK}(K)$, and has homology denoted by $\widehat{\HFK}(K)$. The other grading is called the \emph{Alexander grading}.

  Ozsv\'ath-Szab\'o showed that $\CFK^\infty(K)$ determines $\widehat{\HF}(S^3_q(K))$ for all $q\in\Q$~\cite{ozsvath2010knot}.

  \subsection{Chain Complexes for almost $L$-space knots}

  Theorem~\ref{infinityclassification} states that the chain complexes of almost $L$-space knot admit particularly simple models. That is, there exist particularly simple models of their chain homotopy type. Throughout this paper we shall use $i$ to indicate the $U$-grading on $\CFK^\infty$ and $j$ to indicate the Alexander grading. To make the statement of Theorem~\ref{infinityclassification} we need the following definitions:
 
\begin{definition}\label{staircase}
A \emph{staircase complex} is a set of \change{homogeneous} generators \change{$\{ x_k\}_{1\leq k \leq N} \cup \{ y_k\}_{1\leq k \leq N+1}$} where $N\in\Z^{\geq 0}$ such that $\change{x_k}$ and $\change{y_k}$ only differ in $i$ coordinate while $\change{x_k}$ and $\change{y_{k+1}}$ only differ in $j$ coordinate, and the non-trivial \change{components of the} differential are given by $\partial \change{x_k = y_k + y_{k+1}}$ for $  1\leq \change{k}\leq N.  $ \end{definition}

An example of such a complex is shown in Figure~\ref{staircasefig}.

\begin{definition} \label{def:almoststaircase1}
   Let $N\in\Z^{\geq 0}$. A \emph{type $1$ almost staircase complex} is a complex admitting a \change{homogeneous} basis $\change{\{x_{k}\}_{0<|k|\leq N+1}\cup\{y_k\}_{0< |k|\leq N}}\cup\{z\}$ such that:\begin{enumerate}
       \item For $k>0$, $x_k$ and $y_{k-1}$ only differ in $\change{j}$ coordinate while $\change{x_k}$ and $\change{y_{k}}$ only differ in $\change{i}$ coordinate. For $k<0$, $x_k$ and $y_{k}$ only differ in $i$ coordinate while $\change{x_{k+1}}$ and $\change{y_{k}}$ only differ in $j$ coordinate.
  
       \item Suppose $x_{-1}$ is of bigrading $(a,b)$. Then $z$ is of bigrading $(a-1,b)$ and $x_{1}$ is of bigrading $(a-1,b+1)$.

       \item   The non-trivial components of the differential are given by: \begin{enumerate}
           \item \change{$\partial y_k = x_k + x_{k+1}$ for $k\neq \pm1$},
           \item $\partial y_{\pm1}=x_{\pm 2}+x_1+x_{-1}$,
           \item $\partial x_{\pm1}=z$.
        
       \end{enumerate}
   \end{enumerate}
An example of such a complex is shown in Figure~\ref{almoststaircase2fig}. The $N=0$ case corresponds to the case of the left handed trefoil.
   
\end{definition}

\begin{definition} \label{def:almoststaircase2}
   Let $N\in\Z^{\geq 1}$. A \emph{type $2$ almost staircase complex} is a complex admitting a \change{homogeneous} basis $\change{\{x_{i}\}_{0<|k|\leq N}\cup\{y_k\}_{0<|k|\leq N}\cup\{z\}}$ such that:\begin{enumerate}
       \item For \change{$k<0$, $x_k$ and $y_{k}$ only differ in $i$ coordinate while $x_{k-1}$ and $y_{k}$} only differ in $j$ coordinate. For \change{$k>0$, $x_k$ and $y_{k}$ only differ in $j$ coordinate while $x_{k}$ and $y_{k+1}$} only differ in $i$ coordinate
       \item Suppose $y_{-1}$ is of bigrading $(a,b)$. Then $z$ is of bigrading $(a,b+1)$ and $y_1$ is of bigrading $(a-1,b+1)$.

       \item  The non-trivial components of the differential are given by: \begin{enumerate}
           \item \change{$\partial y_k = x_k + x_{k+1}$ for $k<\change{-1}$,
           \item $\partial y_k = x_k + x_{k-1}$ for $k>\change{1}$},
           \item $\partial y_{\pm 1}=x_{-1}+x_1$,
           \item $\partial z=\change{y_{-1}}+y_1$.
        
       \end{enumerate}
   \end{enumerate}

\end{definition}
An example of such a complex is shown in Figure~\ref{almoststaircase1fig}. Almost staircases of type $1$ arise as $(2,4g(K)-3)$-cables of $L$-space knots while almost staircase complexes of type $2$ do not. This can be deduced using Hanselman-Watson's cabling formula for knot Floer homology in terms of immersed curves~\cite{hanselman2019cabling}. Indeed, the author is unaware of examples of almost staircase complexes of type $2$ which arise as the knot Floer homology of some knot \change{which are not also staircase plus box complexes}.

\begin{definition}\label{def:box}

A \emph{box complex} is a complex with of generators $x_3,x_4,x_1,x_2$ with coordinates $(0,0), (0,1), (1,0)$ and $(1,1)$ respectively, up to overall shift in bigrading,  differentials given by:\begin{enumerate}
    \item$ \partial x_2 = x_1 + x_4 \hspace{5em}$\change{.}
    \item $\partial \change{x_1} = \partial x_4 = x_3$.
    \item $0$ otherwise.
\end{enumerate}
  
\end{definition}
An example of such a complex is shown in Figure~\ref{boxfig}. With these definitions at hand, the statement of Theorem~\ref{infinityclassification} is made rigorous. We note \change{again, however,} that there is a degree of overlap between \change{complexes with the filtered chain homotopy type of box plus staircase complexes and almost staircase complexes}. Namely, in light of Proposition~\ref{prop:delta0}, box plus staircase complexes for knots $K$ with $\rank(\widehat{\HFK}(K,1))=2$ can arise as almost staircase complexes. Note that the mirror of $5_2$, $T(2,3)\#T(2,3)$, $10_{139}$ and $12n_{725}$ all satisfy this property.

\section{Some Remarks on Almost $L$-spaces and almost $L$-space knots}\label{remarkssection}

In this section we give some equivalent definitions of almost $L$-spaces and almost $L$ space knots.

First we give a definition of almost $L$-spaces in terms of of $\HF^+$\change{.}

\begin{proposition}
Let $Y$ be a rational homology sphere. The following conditions are equivalent:
\begin{enumerate}
    \item  $Y$ is an almost $L$-space.
    \item $\rank(\widehat{\HF}(Y,\mathfrak{s}))=1$ aside from for a unique $\spin^c$ structure $\mathfrak{s}_0$ for which $\rank(\widehat{\HF}(Y,\mathfrak{s}_0))=3$.
    \item $\HF^+(Y,\mathfrak{s})\cong\tau^+$ aside from in a unique $\spin^c$ structure $\mathfrak{s}_0$ for which $\HF^+(Y,\mathfrak{s}_0)\cong\tau^+\oplus\change{\F[U]/(U^n)}$ for some $n$.
    \item $\HF^+_{\text{red}}(Y,\mathfrak{s})\cong0 $ aside from in a unique $\spin^c$ structure $\mathfrak{s}_0$ for which $\HF^+_{\text{red}}(Y,\mathfrak{s}_0)\cong\change{\F[U]/(U^n)}$ for some $n$.
\end{enumerate}
\end{proposition}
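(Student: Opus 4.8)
The plan is to prove the chain of equivalences by relating the various flavors of Heegaard Floer homology through the standard structural facts about $\HF^+$ for rational homology spheres, working in each $\spin^c$ structure separately. First I would recall that for a rational homology sphere $Y$ and a fixed $\spin^c$ structure $\mathfrak{s}$, there is a noncanonical splitting $\HF^+(Y,\mathfrak{s})\cong\mathcal{T}^+\oplus\HF^+_{\mathrm{red}}(Y,\mathfrak{s})$, where $\mathcal{T}^+=\F[U,U^{-1}]/U\cdot\F[U]$ is the ``tower'' and $\HF^+_{\mathrm{red}}$ is a finitely generated $\F[U]$-module on which $U$ acts nilpotently. This immediately gives the equivalence of conditions (3) and (4): the reduced part is precisely the cokernel of the tower, so $\HF^+(Y,\mathfrak{s})\cong\mathcal{T}^+$ is the same as $\HF^+_{\mathrm{red}}(Y,\mathfrak{s})\cong 0$, and $\HF^+(Y,\mathfrak{s}_0)\cong\mathcal{T}^+\oplus\F[U]/(U^n)$ is the same as $\HF^+_{\mathrm{red}}(Y,\mathfrak{s}_0)\cong\F[U]/(U^n)$. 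This step is essentially bookkeeping once the splitting is in hand.

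Next I would establish the equivalence of (2) and (3) via the relationship between $\widehat{\HF}$ and $\HF^+$. The key tool is the exact triangle relating $\widehat{\HF}$, $\HF^+$, and the $U$-map on $\HF^+$: there is a long exact sequence $\cdots\to\widehat{\HF}(Y,\mathfrak{s})\to\HF^+(Y,\mathfrak{s})\xrightarrow{U}\HF^+(Y,\mathfrak{s})\to\cdots$, so that $\rank(\widehat{\HF}(Y,\mathfrak{s}))=1+2\dim_{\F}\ker\!\big(U\colon\HF^+_{\mathrm{red}}\to\HF^+_{\mathrm{red}}\big)$ after accounting for the tower's contribution of $1$. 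The point is that the tower contributes exactly $1$ to $\rank(\widehat{\HF})$, and each $\F[U]$-summand $\F[U]/(U^{n_i})$ of the reduced part contributes $2$. Hence $\rank(\widehat{\HF}(Y,\mathfrak{s}))=1$ exactly when $\HF^+_{\mathrm{red}}(Y,\mathfrak{s})=0$, i.e. $\HF^+(Y,\mathfrak{s})\cong\mathcal{T}^+$; and $\rank(\widehat{\HF}(Y,\mathfrak{s}_0))=3$ forces the reduced part to be a single summand $\F[U]/(U^n)$, since two or more cyclic summands would already push the rank to at least $5$ and parity rules out intermediate values. This rank-counting is the place where one must be careful that $\HF^+_{\mathrm{red}}$ is a sum of cyclic $\F[U]$-modules, which follows from the structure theorem over the PID $\F[U]$ applied to a finitely generated $U$-nilpotent module.

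Finally I would close the loop by showing (1) is equivalent to (2). By definition $Y$ is an almost $L$-space when $\rank(\widehat{\HF}(Y))=|H_1(Y;\Z)|+2$, and since $\widehat{\HF}(Y)=\bigoplus_{\mathfrak{s}}\widehat{\HF}(Y,\mathfrak{s})$ with exactly $|H_1(Y;\Z)|$ summands each of odd rank at least $1$ (by inequality~\ref{H1rankbound} applied $\spin^c$-structurewise and the parity observation in the text), the total rank exceeds the minimum $|H_1(Y;\Z)|$ by exactly $2$ precisely when every summand has rank $1$ save one of rank $3$. I expect the main obstacle to be the careful justification of the per-$\spin^c$-structure lower bound $\rank(\widehat{\HF}(Y,\mathfrak{s}))\geq 1$ together with its parity, so that the global deficit of $2$ can only be distributed as a single ``$+2$'' in one $\spin^c$ structure rather than spread across several; this is exactly where the structural fact that each summand contributes an odd number, combined with the minimality characterization, does the work. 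Once that is pinned down, the equivalences (1)$\Leftrightarrow$(2)$\Leftrightarrow$(3)$\Leftrightarrow$(4) follow by assembling the steps above.
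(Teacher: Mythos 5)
Your proposal is correct and follows essentially the same route as the paper: the splitting $\HF^+(Y,\mathfrak{s})\cong\tau^+\oplus\HF^+_{\text{red}}(Y,\mathfrak{s})$, the long exact sequence coming from $0\to\widehat{\CF}\to\CF^+\xrightarrow{U}\CF^+\to 0$, and the count that the tower contributes $1$ and each cyclic summand of the reduced part contributes $2$ to $\rank(\widehat{\HF}(Y,\mathfrak{s}))$. If anything, you are more careful than the paper on $(1)\Leftrightarrow(2)$ (which the paper dismisses as ``just the definition''), since you spell out why the odd-rank-per-$\spin^c$-structure parity forces the excess of $2$ to sit in a single $\spin^c$ structure.
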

Here $\tau^+$ indicates a tower $\F[U,U^{-1}]/\change{(U)}$ and $\HF^+_\text{red}(Y,\mathfrak{s})$ is the submodule of $\change{\HF^+}(Y,\mathfrak{s})$ generated by elements that are not in the image of $U^n$ for sufficiently large $n$. The proof is routine.
\begin{proof}
$1\iff 2$ is just our definition of almost $L$-space.

To see that $2\implies 3$, note that in general $\HF^+(Y,\mathfrak{s})=\tau^+\oplus \HF^+_\text{red}(Y,\mathfrak{s})$. Now  considering the long exact sequence induced by the short exact sequence:

\[
\begin{tikzcd}
  0 \arrow[r] & \widehat{\CF}(Y,\mathfrak{s})  \arrow[r, ""] & \CF^+(Y,\mathfrak{s}) \arrow[r, "U"] & \CF^+(Y,\mathfrak{s}) \arrow[r] & 0 
\end{tikzcd}
\]

we see that $\HF^+(Y,\mathfrak{s})\cong\tau^+$ unless $\mathfrak{s}=\mathfrak{s}_0$. In the latter case observe that for each summand of $\HF^+_\text{red}(Y,\mathfrak{s})$ we find two generators of $\widehat{\HF}(Y,\mathfrak{s}_0)$. It follows that $\HF^+_\text{red}(Y,\mathfrak{s}_0)\cong \Z[U]/\change{(U^n)}$ for some $n\geq 0$.
$3\implies 4$ follows from the definition. $4\implies 2$ follows immediately from the long exact sequence used above.\end{proof}

A similar statement can be made for $\HF^-$:
\begin{proposition}
Let $Y$ be a rational homology sphere. The following conditions are equivalent:
\begin{enumerate}
    \item  $Y$ is an almost $L$-space.

    \item $\HF^-(Y,\mathfrak{s})\cong\tau^-$ aside from in a unique $\spin^c$ structure $\mathfrak{s}_0$ for which $\HF^-(Y,\mathfrak{s}_0)\cong\tau^+\oplus\change{\F[U]/(U^n)}$ for some $n$.
    \item $\HF^-_{\text{red}}(Y,\mathfrak{s})\cong0 $ aside from in a unique $\spin^c$ structure $\mathfrak{s}_0$ for which $\HF^-_{\text{red}}(Y,\mathfrak{s}_0)\cong\change{\F}[U]/(U^n)$ for some $n$.
\end{enumerate}
\end{proposition}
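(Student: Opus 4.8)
The plan is to mirror the proof of the preceding $\HF^+$ proposition, replacing the defining sequence for $\HF^+$ with its $\HF^-$ counterpart. First I would dispose of the equivalence $2\iff 3$, which is purely formal: it follows from the canonical splitting $\HF^-(Y,\mathfrak{s})\cong\tau^-\oplus\HF^-_{\text{red}}(Y,\mathfrak{s})$ as finitely generated $\F[U]$-modules (with $\tau^-=\F[U]$ the free tower), under which $\HF^-(Y,\mathfrak{s})\cong\tau^-$ exactly when $\HF^-_{\text{red}}(Y,\mathfrak{s})=0$, and $\HF^-(Y,\mathfrak{s}_0)\cong\tau^-\oplus\F[U]/(U^n)$ exactly when $\HF^-_{\text{red}}(Y,\mathfrak{s}_0)\cong\F[U]/(U^n)$. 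The substance is therefore the equivalence of these with condition $1$.

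For that I would invoke the long exact sequence associated to
\[
0\longrightarrow\CF^-(Y,\mathfrak{s})\overset{U}{\longrightarrow}\CF^-(Y,\mathfrak{s})\longrightarrow\widehat{\CF}(Y,\mathfrak{s})\longrightarrow 0,
\]
which, working over the field $\F$, produces a (non-canonically split) identification $\widehat{\HF}(Y,\mathfrak{s})\cong\ker(U)\oplus\operatorname{coker}(U)$ for the $U$-action on $\HF^-(Y,\mathfrak{s})$. Combining this with the splitting above and the structure theorem for finitely generated torsion $\F[U]$-modules, I would compute the two spaces summand by summand: the free tower $\tau^-=\F[U]$ contributes $\dim\ker(U)=0$ and $\dim\operatorname{coker}(U)=1$, whereas each cyclic summand $\F[U]/(U^{n_i})$ of $\HF^-_{\text{red}}(Y,\mathfrak{s})$ contributes $1$ to each, hence $2$ in all. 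Summing over $\spin^c$ structures yields
\[
\rank\widehat{\HF}(Y)=|H_1(Y;\Z)|+2\cdot\#\{\text{cyclic summands of }\HF^-_{\text{red}}(Y)\}.
\]

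From this formula the proof concludes at once: $Y$ is an almost $L$-space precisely when the total number of cyclic summands of $\HF^-_{\text{red}}$ across all $\spin^c$ structures is exactly $1$, which is to say $\HF^-_{\text{red}}(Y,\mathfrak{s})=0$ away from a single $\mathfrak{s}_0$ with $\HF^-_{\text{red}}(Y,\mathfrak{s}_0)\cong\F[U]/(U^n)$ a single cyclic module, which is condition $3$. The argument is essentially routine bookkeeping; the only steps requiring care are tracking the grading conventions in the long exact sequence and checking that the relevant short exact sequences of graded $\F$-vector spaces split, so the main (minor) obstacle is keeping the decomposition $\HF^-=\tau^-\oplus\HF^-_{\text{red}}$ compatible with the $\ker/\operatorname{coker}$ computation rather than any genuine difficulty.
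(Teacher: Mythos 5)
Your proof is correct and follows essentially the same route as the paper: the paper's proof simply says to repeat the argument of the preceding $\HF^+$ proposition using the short exact sequence $0\to\CF^-(Y,\mathfrak{s})\xrightarrow{U}\CF^-(Y,\mathfrak{s})\to\widehat{\CF}(Y,\mathfrak{s})\to 0$, which is exactly what you do --- splitting $\HF^-$ into a tower plus $U$-torsion and observing that each cyclic torsion summand contributes two generators to $\widehat{\HF}$. Your write-up merely makes explicit the $\ker(U)/\operatorname{coker}(U)$ bookkeeping that the paper leaves implicit (and silently corrects the paper's typo of $\tau^+$ for $\tau^-$ in condition~2).
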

Here $\tau^-$ indicates a tower $\F[U]$ and $\HF^-_{\text{red}}(Y,\mathfrak{s})$ is the $U$ torsion submodule of $\HF^-(Y,\mathfrak{s})$. The proof is again routine.
\begin{proof}
This follows just as in the case of the previous proposition but applying the following short exact sequence:

\[
\begin{tikzcd}
  0 \arrow[r] & \CF^-(Y,\mathfrak{s})  \arrow[r, "U"] & \CF^-(Y,\mathfrak{s}) \arrow[r, ""] & \widehat{CF}(Y,\mathfrak{s}) \arrow[r] & 0 .
\end{tikzcd}
\]\end{proof}

\begin{remark}
Note that one might reasonably have given a stronger definition of almost $L$-spaces as rational homology spheres with, say, $\HF^+(Y;\change{\F})\cong\tau^+\oplus\F$. It is not clear to the author if this would be a better definition.\footnote{\change{It turns out that the distinction between these two notions is intimately tied to the $L$-space conjecture; see~\cite{lin2023remark}, which appeared subsequently to this article.}}
\end{remark}

 We now give the following alternate characterisation of almost $L$-space knots:

\begin{proposition}\label{stability}
Let $K$ be a knot of genus $g$. $K$ is an almost $L$-space knot if and only if $\rank(\widehat{\HF}(S^3_{p/q}(K)))=p+2q$ for all $p/q\geq 2g-1$ \change{with $p,q>0$}.
\end{proposition}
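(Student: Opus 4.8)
The plan is to prove the two directions separately, using Theorem~\ref{infinityclassification} as the backbone of the argument. The key point is that both the hypothesis ``$K$ is an almost $L$-space knot'' and the conclusion ``$\rank(\widehat{\HF}(S^3_{p/q}(K)))=p+2q$ for all $p/q\geq 2g-1$ with $p,q>0$'' can be read off from $\CFK^\infty(K)$, since Ozsv\'ath-Szab\'o showed that $\CFK^\infty(K)$ determines $\widehat{\HF}(S^3_{p/q}(K))$ for all rational $p/q$. For this computation the natural tool is the immersed-curve interpretation of the surgery formula of Hanselman-Rasmussen-Watson, which the introduction already flags as the intended method: to $\CFK^\infty(K)$ one associates an immersed multicurve $\gamma(K)$ in the punctured torus, and $\rank(\widehat{\HF}(S^3_{p/q}(K)))$ is computed as the minimal geometric intersection number of $\gamma(K)$ with a line of slope $p/q$.

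First I would handle the reverse direction, which is the easy one. Assume $\rank(\widehat{\HF}(S^3_{p/q}(K)))=p+2q$ for all $p/q\geq 2g-1$ with $p,q>0$. Specializing to $q=1$ and any integer $n=p\geq \max(2g-1,1)$ gives $\rank(\widehat{\HF}(S^3_n(K)))=n+2$, which is exactly the definition of an almost $L$-space knot (the required $n\geq 2g(K)-1$ is met). So this direction is essentially immediate and requires no structural input.

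The forward direction is the substantive one. Assume $K$ is an almost $L$-space knot; by Theorem~\ref{infinityclassification}, $\CFK^\infty(K)$ is filtered chain homotopy equivalent either to a staircase plus a box (type~\ref{staircasebox}) or to an almost staircase (type~\ref{almoststaircase}). The strategy is to compute the immersed curve $\gamma(K)$ in each case and show that its minimal intersection number with a slope-$p/q$ line equals $p+2q$ for every $p/q\geq 2g-1$ with $p,q>0$. The staircase summand contributes the distinguished non-compact component of the immersed curve, whose intersection with slope $p/q$ (for $p/q\geq 2g-1$) contributes exactly $p$, matching the $L$-space knot computation; the box summand contributes a small compact (figure-eight-type) component enclosing the puncture, which for every positive slope adds exactly $2q$ to the intersection count. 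For the almost staircase cases one argues directly that the single immersed multicurve, for all such slopes, has minimal intersection number $p+2q$: the ``extra'' feature of the almost staircase relative to a genuine staircase is precisely what produces the additive $2q$ term. I would set up the intersection-number bookkeeping carefully, noting that the stated range $p/q\geq 2g-1$ is exactly the regime in which no cancellation or unexpected intersections occur, so the count stabilizes to $p+2q$.

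The main obstacle I anticipate is the case analysis in the forward direction, specifically verifying the intersection-number count uniformly over all rational slopes $p/q \geq 2g-1$ (rather than just integers) and across all the complex types in Theorem~\ref{infinityclassification}, including the degenerate small-genus cases. One must confirm that the box (or the almost-staircase feature) always contributes exactly $2q$ and never interacts with the staircase component in a way that changes the count for slopes in the stated range; handling the boundary slope $p/q=2g-1$ and checking the $q>0$ hypothesis is used correctly are the delicate points. It will be cleanest to first establish the count for the pure $L$-space staircase (where it is known that the answer is $p$), then treat the box and almost-staircase modifications as local perturbations of the immersed curve that each add a uniform $2q$, reducing the whole argument to understanding one local model.
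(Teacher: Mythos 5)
Your proposal is correct, and its substantive (forward) direction is essentially the paper's argument: the paper also feeds the complexes from Theorem~\ref{infinityclassification} into the Hanselman--Rasmussen--Watson picture, but rather than doing the intersection-number bookkeeping by hand it converts each classified complex into pegboard data via~\cite[Proposition 48]{hanselman2018heegaard} (the bases in Theorem~\ref{infinityclassification} are horizontally and vertically simplified) and then quotes Hanselman's rank formula (Proposition~\ref{Hanselmanrank}), $\rank(\widehat{\HF}(S^3_{p/q}(K)))=|p-qm|+n|q|$, where $m$ is the slope of the unique non-vertical segment and $n$ the number of vertical segments counted with multiplicity. This packages your ``local model'' analysis once and for all: the whole case check reduces to verifying $m\leq 2g-1$ and $n=m+2$ for each complex type; be warned that in the degenerate genus-one cases (e.g.\ the figure eight knot) the staircase component is shorter than the genus, so $m<2g-1$, which is why the inequality rather than the exact equality $m=2g-1$ is the robust thing to verify. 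Where you genuinely diverge is the reverse direction: you specialize to integer slopes $q=1$, $p=n\geq\max(2g-1,1)$ and invoke the definition, which is valid and strictly simpler than what the paper does. The paper instead assumes the rank identity for a \emph{single} rational slope $p/q\geq 2g-1$ and runs the rank formula backwards: the identity $p+2q=nq+|mq-p|$ together with the parity constraint $n-m\in 2\Z^{\geq 0}$ forces either $n=m$ (making $K$ an $L$-space knot with $p/q=m-1$, incompatible with $p/q\geq 2g-1$) or $n=m+2$, whence $K$ is an almost $L$-space knot. So the paper's route buys a strictly stronger statement---one large rational surgery of rank $p+2q$ already certifies the property---while your route buys economy, needing no immersed-curve input at all in that direction; both yield complete proofs of the stated proposition.
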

We find it convenient to prove this proposition by way of Hanselman-Rasmussen-Watson's immersed curve interpretation of bordered Floer homology~\cite{hanselman2016bordered,hanselman2018heegaard}. This invariant can be thought of as assigning to each knot $K$ a closed multi-curve  $\Gamma(K)$ in the infinite strip $[0,1]\times \R$, punctured at the points $\{0\}\times\Z$. Pulling $\gamma(K)$ tight\change{,} we obtain a collection of straight line segments. This is called a~\emph{singular pegboard diagram} in~\cite{hanselman2016bordered}. There is a unique such segment which is not vertical. Let $m$ denote its slope. Let $n$ be the number of vertical line segments, counted with multiplicity. Note that $n-m$ is even. We will apply the following proposition of Hanselman:

\begin{proposition}[{{Hanselman~\cite[\change{Proposition 15}]{hanselman2022heegaard}}}]\label{Hanselmanrank}
Let $K$ be a knot with $m$ and $n$ as above $${\rank(\widehat{\HF}(S^3_{p/q}(K)))=|p-qm|+n|q|}.$$
\end{proposition}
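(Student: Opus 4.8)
The plan is to deduce the formula from the Hanselman--Rasmussen--Watson pairing theorem for the immersed-curve invariant, invoked in the paragraph preceding the statement~\cite{hanselman2016bordered}. That theorem identifies $\widehat{\HF}(S^3_{p/q}(K))$ with the Lagrangian Floer homology of the pair consisting of $\gamma(K)$ and a line $\ell_{p/q}$ of slope $p/q$ in the punctured strip; once both curves are taut the differential vanishes and $\rank\widehat{\HF}(S^3_{p/q}(K))$ equals the minimal geometric intersection number $i(\gamma(K),\ell_{p/q})$. The first step is just to record this and reduce the problem to an intersection count.

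Second, I would pass to the singular pegboard representative of $\gamma(K)$, together with a straight representative of $\ell_{p/q}$. The pegboard diagram is a union of straight segments --- one non-vertical segment of slope $m$ and $n$ vertical segments counted with multiplicity --- and because these representatives are taut (no two intersection points cobound an immersed bigon) they realize the minimal geometric intersection number while supporting no differential. Hence it suffices to count intersections additively, segment by segment.

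Third comes the count. Writing $\mu,\lambda$ for the meridian and longitude and $\langle\cdot,\cdot\rangle$ for the intersection pairing, the $p/q$-surgery curve carries the class $q\mu+p\lambda$. Each vertical segment carries the longitude direction $\lambda$ and contributes $|\langle\lambda,\,q\mu+p\lambda\rangle|=|q|$, so the $n$ vertical segments contribute $n|q|$; the unique non-vertical segment carries a class $\mu+m\lambda$ and contributes $|\langle\mu+m\lambda,\,q\mu+p\lambda\rangle|=|p-qm|$. Summing, $i(\gamma(K),\ell_{p/q})=|p-qm|+n|q|$, which with the first step gives the formula. As a sanity check, the right-handed trefoil has $m=n=1$, and the formula then returns rank $p$ exactly when $p/q\geq 1=2g-1$, as it must for an $L$-space knot.

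The hard part will be the rigor behind the second step: verifying that the taut pegboard representative and the straight line simultaneously realize the minimal geometric intersection number and support no Floer differential, and that the additive, homological count of the third step is legitimate (that geometric and homological intersection numbers agree here because each piece is a taut straight arc paired with a straight line). One must also handle the bookkeeping at the punctures and at the identified edges of the strip so that nothing is double-counted or missed; the parity relation $n-m\equiv 0 \pmod 2$, which reflects the homology class carried by $\gamma(K)$, is a useful consistency check on this count though it is not logically needed.
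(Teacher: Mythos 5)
The paper does not prove this proposition at all --- it is quoted verbatim from Hanselman~\cite[Proposition 15]{hanselman2022heegaard} --- and your reconstruction is essentially Hanselman's own argument: invoke the pairing theorem to identify $\rank(\widehat{\HF}(S^3_{p/q}(K)))$ with the minimal geometric intersection number of $\gamma(K)$ with a slope-$p/q$ line (taut representatives bound no bigons, so the differential vanishes), then count intersections segment-by-segment in the pegboard diagram, getting $n|q|$ from the vertical segments and $|p-qm|$ from the unique non-vertical one. Your outline is correct, including the two points you flag as needing care (the peg/puncture bookkeeping, and the fact that ``counted with multiplicity'' absorbs any local systems on $\gamma(K)$), so there is nothing to add beyond noting that these details are exactly what Hanselman's write-up supplies.
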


We now prove Proposition~\ref{stability}.

\begin{proof}[Proof of Proposition~\ref{stability}]
We first prove the forward direction. If $K$ is an almost $L$-space knot, Theorem~\ref{infinityclassification} implies that $m=2g(K)-1$, $n=2g(K)+1$. \change{These two facts can be deduced from ~\cite[Proposition 48]{hanselman2018heegaard}, which shows how to produce the immersed curve invariant of the exterior of a knot $K$ given a horizontally and vertically simplified basis for  $\CFK^\infty(K)$. Note that each filtered basis in the statement of Theorem~\ref{infinityclassification} is both horizontally and vertically simplified (see Definition~\cite[Definition 11.23]{LOTborderedbook}.)} Proposition~\ref{Hanselmanrank} immediately implies that ${rank(\widehat{\HF}(S^3_{p/q}(K)))=p+2q}$ for all $p/q\geq 2g(K)-1$.

For the opposite direction\change{,} suppose that there exist $p,q$ such that $\frac{p}{q}\geq 2g(K)-1$ and $\rank(\widehat{\HF}(S^3_{p/q}(K)))=p+2q$. We first show that $K$ is an $L$-space knot or an almost $L$-space knot. Note that Proposition~\ref{Hanselmanrank} implies that, with $m,n$ as above $p+2q=n|q|+|mq-p|$. Since $K$ cannot be the unknot $U$ as $\rank(\widehat{\HF}(S^3_{p/q}(U)))=p$, we have that $2g(K)-1>0$ and may take  $p,q\geq 0$, so that $p+2q=nq+|mq-p|$. Suppose $m> \frac{p}{q}$. Then $2\dfrac{p}{q}+2=n+m$, so that $2m+2\geq n+m$ and $m+2> n$. But $n-m\in 2\Z^{\geq 0}$, so that $n=m$ in which case $K$ is an $L$-space knot and $\frac{p}{q}=m-1$. Otherwise we have that $n=m+2$. It follows in turn that $\rank(\widehat{\HF}(S^3_s(K)))=s+2$ for any sufficiently large $s\change{\in\Z}$ so that $K$ is an almost $L$-space knot.\end{proof}

We conclude this section with the following proposition, taking as given Theorem~\ref{infinityclassification}:

\begin{proposition}\label{prop:delta0}
     Suppose $K$ is an almost $L$-space knot with the $\CFK^\infty$-type of a box plus staircase complex. Then $\widehat{\HFK}(K,0)$ is supported in a single Maslov grading.
\end{proposition}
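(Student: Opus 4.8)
The plan is to reduce the statement to a single Maslov-grading comparison, and then to extract that comparison from the involutive structure on $\CFK^\infty(K)$. By part~\ref{staircasebox} of Theorem~\ref{infinityclassification} I may take $\CFK^\infty(K)=S\oplus B$, where $S$ is a staircase complex and $B$ a box complex. Along a staircase the generators $y_1,x_1,y_2,x_2,\dots$ occupy strictly monotone (hence distinct) Alexander gradings, symmetric about $0$, and there are an odd number $2N+1$ of them; so exactly one staircase generator $s_0$ lies in Alexander grading $0$. By Definition~\ref{def:box} exactly two box generators, $x_2$ and $x_3$, lie in Alexander grading $0$. Thus $\widehat{\HFK}(K,0)$ has rank $3$ with basis $\{s_0,x_2,x_3\}$. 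Keeping track of the powers of $U$ carried by the length-one horizontal arrows of $B$ (equivalently, passing from the drawn bigradings to the intrinsic Maslov gradings via the shift by $2i$), one checks that $x_2$ and $x_3$ share a Maslov grading $\mu$, while the Alexander $\pm 1$ generators of $B$ sit in Maslov gradings $\mu\pm 1$. Hence it remains only to prove that $M(s_0)=\mu$.

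Next I would set up the involutive package. Since $K$ is a genuine knot, $\CFK^\infty(K)$ carries the conjugation homotopy involution $\iota_K$, which is Maslov-grading preserving and Alexander-grading reversing, and which by Hendricks--Manolescu satisfies $\iota_K^2\simeq\varsigma_K$, where $\varsigma_K=\text{id}+\Phi\Psi$ is Sarkar's basepoint-moving map. Because $\Phi$ and $\Psi$ are built from the differential of $\CFK^\infty(K)$, they respect the splitting $S\oplus B$, and hence so does $\varsigma_K$. On $\langle s_0\rangle$ the map $\varsigma_K$ is the identity: $s_0$ is the unique Alexander-$0$ generator of $S$, so $\Phi\Psi$ preserves $\langle s_0\rangle$ and invertibility forces the identity there. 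On $\langle x_2,x_3\rangle$ one computes from the box differential that $\Psi(x_2)=x_1$ and $\Phi(x_1)=x_3$, whence $\Phi\Psi(x_2)=x_3$ while $\Phi\Psi(x_3)=0$; so on $\widehat{\HFK}(K,0)$ the induced map $(\varsigma_K)_*$ acts as the identity on $s_0$ and as the nontrivial transvection $\left(\begin{smallmatrix}1&0\\1&1\end{smallmatrix}\right)$ on $\langle x_2,x_3\rangle$.

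The contradiction then comes from linear algebra over $\F$. Suppose $M(s_0)\neq\mu$. Since $(\iota_K)_*$ preserves the Maslov grading, it must preserve the splitting $\widehat{\HFK}(K,0)=\langle s_0\rangle\oplus\langle x_2,x_3\rangle$, and therefore so does $(\iota_K)_*^2=(\varsigma_K)_*$; in particular $(\iota_K)_*|_{\langle x_2,x_3\rangle}$ would be a square root in $\GL_2(\F)$ of the transvection $\left(\begin{smallmatrix}1&0\\1&1\end{smallmatrix}\right)$. But $\GL_2(\F)\cong S_3$, in which transpositions are not squares, so no such root exists. This rules out $M(s_0)\neq\mu$, so $M(s_0)=\mu$ and $\widehat{\HFK}(K,0)$ is supported in the single Maslov grading $\mu$. (Conceptually this is the relative analogue of the Hanselman--Kutluhan--Lidman constraint~\cite{hanselman2019remark} that a self-conjugate $\spin^c$ structure with next-to-minimal rank cannot place its reduced part asymmetrically.)

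The step I expect to be the main obstacle is the second paragraph: correctly identifying Sarkar's maps $\Phi$ and $\Psi$ with the horizontal and vertical arrows of the model $S\oplus B$ and verifying that $\varsigma_K$ acts on the Alexander-$0$ box generators as the nontrivial transvection rather than the identity. Once this is pinned down, everything downstream is formal, since $\iota_K^2\simeq\varsigma_K$ passes to an honest equality of induced maps on the homology $\widehat{\HFK}(K,0)$ and the $\GL_2(\F)$ square-root obstruction finishes the argument.
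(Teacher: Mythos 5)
Your proof is correct and takes essentially the same approach as the paper: both rest on the Hendricks--Manolescu involution, Sarkar's basepoint-moving map $\varsigma_K = \mathrm{id} + \Phi\Psi$ (transported to the abstract model $S \oplus B$ via Zemke's invariance result), the computation that $\varsigma_K$ acts on the Alexander-grading-zero part of the box as the transvection $x_2 \mapsto x_2 + U^{-1}x_3$ while fixing everything else, and the observation that no Maslov-grading-preserving $\iota_K$ can square to this if the staircase generator sits in a different Maslov grading. The only cosmetic differences are that the paper reaches the contradiction by enumerating the two possible grading-preserving forms of $\iota_K$ on the box (both square to the identity), where you instead invoke the fact that a transvection is not a square in $\GL_2(\F) \cong S_3$, and that your bookkeeping elides some powers of $U$ (the Alexander-zero box contributions are $x_3$ and $Ux_2$, and $\Phi\Psi(x_2) = U^{-1}x_3$ rather than $x_3$), which you yourself flag and which does not affect the argument.
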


\change{Note that in combination with Theorem~\ref{infinityclassification}, this proposition implies that the Alexander polynomial of an almost $L$-space knot $K$ determines $\CFK^\infty(K)$. We will use a weaker version of this statement --- that the Alexander polynomial of an almost $L$-space knot $K$ determines $\widehat{\HFK}(K)$ --- in the proof of Proposition~\ref{almostLspacecables}.}

\begin{proof}

Suppose $K$ is an almost $L$-space knot with the $\CFK^\infty$-type of a box plus staircase complex.

Consider Hendricks-Manolescu's involution $\iota$ on $\CFK^\infty(K)$~\cite{hendricks2017involutive}. Observe that $\iota^2$ yields Sarkar's basepoint pushing map~\cite{sarkar2015moving}. Thus $\iota^2$ is determined up to filtered chain homotopy by the $\CFK^\infty$-type of $K$ by a result of Zemke~\cite[Corololary C]{zemke2017quasistabilization}. Consider the components of $\iota$ and Sarkar's basepoint pushing map acting on the box complex. Endow the box complex with the generators $x_1,x_2,x_3,x_4$ as in Definition~\ref{def:box}. Let $e$ denote $U^{-1}x_3$. Sarkar's basepoint pushing map is given by $x_1\mapsto x_1$, $x_4\mapsto x_4$, $e\mapsto e$ and $x_2\mapsto x_2+e$. \change{Let $z$ be the generator of $\CFK^\infty(K)$ of Alexander grading zero and $U$-grading $0$ that is contained in the staircase.} Suppose that $z$ is not of the same Maslov grading as $e$ and $x_2$. Observe that the component of $\iota(x_2)$ in $(U,A)$-grading $(0,0)$ is given by either $e+x_2$ or $x_2$, while $\iota(e)=e$. Neither of these maps square to Sarkar's basepoint pushing map, so we have a contradiction and that $z$ is of the same Maslov grading as $e$ and $x_2$.\end{proof}

\section{$\CFK^\infty$ of almost $L$-space knots as a bigraded vector space without Maslov gradings}\label{nomaslov}

In this sectionm we determine $\CFK^\infty$ as a bigraded vector space for almost $L$-space knots without Maslov grading. We apply similar techniques as used by  Ozsv\'ath-Szab\'o~\cite{ozsvath2005knot} in the proof of the corresponding result for $L$-space knots.

Recall that $\widehat{\HF}(S^3_n(K))$ admits \change{a grading by $\spin^c$-structures on $S^3_n(K)$, which are in turn in bijection with }$\Z/n$. We first determine the grading in which $\rank(\widehat{\HF}(S^3_n(K),[i]))=3$.

\begin{lemma}\label{spin0}
    Suppose $K$ is an almost $L$-space knot and $n\geq2g(K)-1$ is an integer. Then $\rank(\widehat{\HF}(S^3_n(K)),[i])$ is $1$ unless $[i]=0$, in which case it is $3$.
\end{lemma}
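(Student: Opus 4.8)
The plan is to use the large surgery formula, which identifies $\widehat{\HF}(S^3_n(K),[i])$ with the homology of a quotient complex of $\CFK^\infty(K)$ for $n$ sufficiently large. Recall that for $n\geq 2g(K)-1$, Ozsv\'ath-Szab\'o's large surgery formula gives a graded isomorphism
\[
\widehat{\HF}(S^3_n(K),[i])\cong H_*\big(\widehat{A}_i\big),
\]
where $\widehat{A}_i$ is the ``hook'' subquotient complex $C\{\max(i,j-s)=0\}$ of $\CFK^\infty(K)$ (here $s$ is the relevant Alexander grading parameter, and $[i]$ ranges over $\Z/n$ with representatives $|i|\leq n/2$). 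The key structural input is that for $|i|$ large — specifically $|i|\geq g(K)$ — the complex $\widehat{A}_i$ computes the same thing as $\widehat{\CF}(S^3)$, so $H_*(\widehat{A}_i)\cong\F$ and the rank is $1$. This already forces the next-to-minimal rank, namely the rank $3$, to occur only for $|i|$ small.

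The main work is to pin down that the anomalous grading is exactly $[i]=0$ rather than some other small value. First I would invoke the definition of almost $L$-space knot together with Proposition~\ref{stability} (or directly the hypothesis applied to large integer surgeries) to conclude that $\sum_{[i]\in\Z/n}\rank(\widehat{\HF}(S^3_n(K),[i]))=n+2$, while each summand is odd and at least $1$ (oddness follows from the parity argument in the introduction, since the decategorification respects parity in each $\spin^c$ structure). Hence exactly one grading has rank $3$ and all others have rank $1$. It then remains to locate that grading. The natural approach is a symmetry argument: the large surgery formula complexes satisfy $\widehat{A}_i\simeq\widehat{A}_{-i}$ as a consequence of the conjugation symmetry of knot Floer homology, so $\rank H_*(\widehat{A}_i)=\rank H_*(\widehat{A}_{-i})$. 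Since there is a \emph{unique} grading with rank $3$, that grading must be fixed under $[i]\mapsto[-i]$; when $n$ is odd the only fixed point is $[i]=0$, and when $n$ is even the fixed points are $[i]=0$ and $[i]=n/2$, the latter being ruled out because for $n\geq 2g(K)-1$ the class $[n/2]$ is represented by an index of absolute value $\geq g(K)$, where we already argued the rank is $1$.

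The step I expect to be the main obstacle is making the rank count and the identification of $\widehat{A}_i$ fully rigorous \emph{without} already knowing Theorem~\ref{infinityclassification} — since this lemma sits at the start of the proof of that theorem, I must be careful to use only the hypothesis (large surgery has rank $n+2$) and general properties of $\CFK^\infty(K)$, not the classification itself. Concretely, the delicate point is verifying that $H_*(\widehat{A}_i)$ has rank $1$ for all $[i]\neq 0$ and rank $3$ only at $[i]=0$: the rank-$1$-for-large-$i$ statement is standard, but ruling out rank-$3$ at intermediate gradings $0<|i|<g(K)$ requires the uniqueness from the parity/total-rank count plus the conjugation symmetry, exactly as above. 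I would therefore carry out the argument in the order: (1) apply the large surgery formula to reduce to the complexes $\widehat{A}_i$; (2) establish rank $1$ for $|i|\geq g(K)$; (3) use the almost $L$-space hypothesis plus parity to get a unique rank-$3$ grading with all others rank $1$; (4) use conjugation symmetry $\widehat{A}_i\simeq\widehat{A}_{-i}$ to force the rank-$3$ grading to be a fixed point of $[i]\mapsto[-i]$, and eliminate $[i]=n/2$ by the large-$i$ computation, concluding $[i]=0$.
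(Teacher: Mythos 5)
Your proposal is correct and follows essentially the same route as the paper: the paper likewise uses the conjugation action on $\spin^c$ structures (which forces the unique rank-$3$ summand to be self-conjugate, hence $[0]$ or $[n/2]$) and then rules out $[n/2]$ by noting that $n\geq 2g(K)-1$ puts $n/2\geq g(K)$, where Ozsv\'ath-Szab\'o's large surgery formula gives rank $1$. The only difference is bookkeeping: the paper phrases the symmetry directly on $\widehat{\HF}(S^3_n(K),[i])$ rather than through the complexes $\widehat{A}_i$, and leaves the parity/total-rank count implicit in its earlier equivalent characterizations of almost $L$-spaces.
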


\begin{proof}
    The set of $\spin^c$ structures admit\change{s} a conjugation action which induces an isomorphism on $\widehat{\HF}(S^3_n(K))$. This action sends $\widehat{\HF}(S^3_n(K),[i])$ to $\widehat{\HF}(S^3_n(K),[-i])$. Since there is a unique $\spin^c$ structure in which $\widehat{\HF}(S^3_n(K))$ is not one, it follows that this $\spin^c$ structure is self conjugate. There are potentially two such $\spin^c$ structures, namely $0$ and $\frac{n}{2}$ if $n$ is even. We show that the latter case is impossible. Since $n\geq 2g(K)-1$ we have that $\frac{n}{2}\geq g(K)$. However, it follows immediately from Ozsv\'ath-Szab\'o's surgery formula~\cite{ozsvath2008knot} that $\widehat{\HF}(S^3_n(K);[i])$ is of rank $1$ for $i\geq g(K)$, a contradiction.\end{proof}

We note in passing that if $K$ is an $L$-space knot then

$${\rank(\widehat{\HF}(S^3_{2g(K)-2}(K),[g(K)-1])=3}.$$

Before proceeding\change{,} we introduce some notation. $\CFK^\infty$ is a $\Z\oplus\Z$ graded complex, where the grading $(i,j)$ indicates a $U$-grading of $j$ and an Alexander grading of $i$. \change{We let $\{\mathbf{A}\}$ denote the set of generators satisfying condition $\mathbf{A}$, whatever condition $\mathbf{A}$ may be}. We denote by $C\change{(S)}$ the (\change{appropriate quotient of the}) subcomplex of the associated graded consisting of generators in the set \change{ $S\subset\Z\oplus\Z$, for appropriate $S$. Likewise\change{,} let $H\change{(S)}$ denote the homology of $C(S)$.}

 Set $X_m=\{i\leq 0,j=m\}$, $Y_m=\{i=0,j\leq m-1\}$. Let $UX_m$ be the \change{set} $\{i< 0,j=\change{m}-1\}$.

Following, Ozsv\'ath-Szab\'o's approach in the $L$-space knot setting~\cite[Section 3]{ozsvath2005knot}, observe that we have a pair of short exact sequences:

\begin{center}

\begin{tikzcd}
&&&0\arrow[d]\\

0\arrow[r]&C(UX_m\change{)}\arrow[r]&C\change{(}UX_m\cup Y_m\change{)}\arrow[r]& C(Y_m\change{)}\arrow[d]\arrow[r]&0\\

&&&C(X_m\cup Y_m\change{)}\arrow[d]\\&&&C(X_m\change{)}\arrow[d]\\&&&0
\end{tikzcd}

\end{center}

This yields a pair of exact triangles in homology shown in Figure~\ref{exacttriangles}. As noted by Ozsv\'ath-Szab\'o~\cite[Section 3]{ozsvath2005knot}, the composition of the two horizontal maps is zero.

\begin{figure}[h]

\begin{tikzcd}
H_*(UX_m)\arrow[dr,"e"]&&H_*(Y_m)\arrow[dr,"f"]\arrow[ll,"c"]&&H_*(X_m)\arrow[ll,"d"]\\
&H_*(UX_m\cup Y_m)\arrow[ur,"b"]&&H_*(X_m\cup Y_m)\arrow[ur,"a"]
\end{tikzcd}

\caption{A pair of exact triangles we will use repeatedly in this section and the next.}\label{exacttriangles}
\end{figure}

We first extract the following lemma from work of Ozsv\'ath-Szab\'o~\cite[Section 3]{ozsvath2005knot}:
   
\begin{lemma} \label{alexander1}
Suppose $H_*(X_m\cup Y_m)\cong\F$, $H_*(UX_m\cup Y_m)\cong \F^3$. Then $1\leq \rank(H_*(X_m))\leq 2$.

\end{lemma}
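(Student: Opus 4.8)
The plan is to analyze the two exact triangles in Figure~\ref{exacttriangles} under the given hypotheses $H_*(X_m\cup Y_m)\cong\F$ and $H_*(UX_m\cup Y_m)\cong\F^3$, and read off constraints on $\rank(H_*(X_m))$ from exactness together with the vanishing of the composite of the two horizontal maps. The key structural input, noted in the excerpt and taken from Ozsv\'ath-Szab\'o~\cite[Section 3]{ozsvath2005knot}, is that the composition $H_*(Y_m)\xrightarrow{c} H_*(UX_m)$ followed by $e$ is zero, and dually the composition through the lower triangle; more precisely the composite of the two horizontal maps $d$ and $c$ vanishes. I would use this to decompose the long exact sequences into manageable short pieces.

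First I would record the two exact triangles explicitly: the upper triangle relates $H_*(UX_m)$, $H_*(UX_m\cup Y_m)\cong\F^3$, and $H_*(Y_m)$ via maps $c,e,b$; the lower triangle relates $H_*(Y_m)$, $H_*(X_m\cup Y_m)\cong\F$, and $H_*(X_m)$ via maps $f,a,d$. The crucial observation is that $H_*(UX_m)$ and $H_*(X_m)$ are related by the $U$-action, since $UX_m$ is the shift of (part of) $X_m$; in the $L$-space setting Ozsv\'ath-Szab\'o exploit exactly this to see that $H_*(UX_m)$ and $H_*(X_m)$ have equal rank, or differ in a controlled way. I would first establish that $\rank H_*(X_m)=\rank H_*(UX_m)$, or the appropriate relation, so that the unknown quantity appears symmetrically in both triangles.

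Next I would run a rank count. From the lower triangle, exactness gives $\rank H_*(X_m\cup Y_m) = \rank(\im a)+\rank(\im d)$ type relations, and since $H_*(X_m\cup Y_m)\cong\F$ is one-dimensional, the maps $a,d,f$ are tightly constrained: the image and kernel ranks must sum correctly so that $H_*(X_m)$ cannot be too large. I expect that exactness in the lower triangle forces $\rank H_*(X_m)\le \rank H_*(Y_m)+1$ or similar, while the upper triangle with $H_*(UX_m\cup Y_m)\cong\F^3$ bounds $\rank H_*(Y_m)$ and hence $\rank H_*(UX_m)$ from above. Combining the bound on $H_*(UX_m)$ with the $U$-action identification $\rank H_*(X_m)=\rank H_*(UX_m)$ should yield the upper bound $\rank H_*(X_m)\le 2$, while the lower bound $\rank H_*(X_m)\ge 1$ will follow because $X_m$ contains a generator whose homology cannot vanish (it carries the tower, reflecting that these are subquotient complexes of $\CFK^\infty$ whose homology localizes to $\F[U,U^{-1}]$).

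The main obstacle I anticipate is keeping the bookkeeping of images and kernels consistent across the two triangles while correctly invoking the vanishing of the horizontal composite; it is easy to over- or under-count and obtain a vacuous bound. The delicate point is precisely how the $U$-equivariance links $H_*(UX_m)$ to $H_*(X_m)$ and how much of the $\F^3$ in $H_*(UX_m\cup Y_m)$ is absorbed by the connecting maps versus surviving to constrain $H_*(Y_m)$. I would handle this by writing the two long exact sequences out term by term, labelling each map's rank, and solving the resulting linear system of rank relations, using $\dim H_*(X_m\cup Y_m)=1$ and $\dim H_*(UX_m\cup Y_m)=3$ as the binding constraints to pin $\rank H_*(X_m)$ into the interval $[1,2]$.
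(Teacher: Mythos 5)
Your outline follows the same route as the paper's proof: the two exact triangles of Figure~\ref{exacttriangles}, the identification $\rank (H_*(UX_m))=\rank (H_*(X_m))$ coming from the $U$-action (the paper simply writes both groups as $\F^n$), rank bookkeeping, and the vanishing of the composite $c\circ d$ of the two horizontal maps. However, two of your steps fail as written. First, your justification of the lower bound is wrong: $H_*(X_m)$ does not ``carry the tower.'' The complex $C(X_m)$ is supported in the single row $j=m$, is not preserved by $U$, and its homology can perfectly well vanish (for the unknot it vanishes for every $m>0$), so any unconditional nonvanishing argument of the sort you describe would prove a false statement. Under the hypotheses of the lemma the lower bound instead falls out of exactness: if $\rank(H_*(X_m))=0$, the lower triangle forces $f:H_*(Y_m)\to\F$ to be an isomorphism, and then exactness of the upper triangle at $\F^3$ (where $\im \, e=0$) would make $b:\F^3\to H_*(Y_m)\cong\F$ injective, a contradiction.

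Second, the upper bound cannot be obtained the way you propose, by ``solving the resulting linear system of rank relations'' with $\dim H_*(X_m\cup Y_m)=1$ and $\dim H_*(UX_m\cup Y_m)=3$ as the binding constraints. That system is consistent for every $n$: the lower triangle gives $\rank (H_*(Y_m))\in\{n-1,n,n+1\}$, and the upper triangle then admits nonnegative solutions for the ranks of $\im\, b$, $\im \,c$, $\im\, e$ for all $n$, so on its own it yields exactly the vacuous bound you were worried about. The decisive input is $c\circ d=0$ used as a containment of subspaces, $\im\, d\subseteq\ker c$, not as a rank identity. This is what the paper's case analysis does: if $a$ is injective, then $d$ surjects onto $H_*(Y_m)\cong\F^{n-1}$ while the rank count in the upper triangle shows $\ker c$ has rank $1$, forcing $n-1\leq 1$; if $a=0$, then $d$ is injective with image of rank $n$ while $\ker c$ has rank $2$, forcing $n\leq 2$. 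Your sketch names the vanishing of $c\circ d$ but assigns it only the role of ``decomposing the long exact sequences,'' so the step that actually produces the bound is missing from your plan.
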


\begin{proof}

Suppose $H_*(X_{\change{m}})\cong \F^n$ \change{for some integer $n$}. In this specific context the exact triangles from Figure~\ref{exacttriangles} yield:
\begin{center}
\begin{tikzcd}
    \F^n\arrow[dr]&&H_*(Y_{\change{m}})\arrow[dr]\arrow[ll,"c"]&&\F^n\arrow[ll,"d"]\\
&\F^3\arrow[ur, "b"]&&\F \arrow[ur, "a"]
\end{tikzcd}

\end{center}

If $a$ is injective then $H_*(Y_{\change{m}})\cong \F^{n-1}$, $d$ has kernel of rank $1$, $b$ must have image of rank $1$, and $c$ must have kernel of rank $1$. Since the composition of the two horizontal maps must be trivial it follows that $1\leq n\leq 2$.

If $a$ is trivial then $d$ is injective and $H_*(Y_{\change{m}})\cong \F^{n+1}$ whence $b$ has image of rank $2$ and $c$ has kernel of rank $2$. It follows that the image of $c\circ d$ is of rank $n-1$ or $n-2$, whence $n=1$ or $n=2$ since the composition of the two maps is trivial.\end{proof}

\begin{lemma}\label{alexander0}

Suppose $H_*(X_m\cup Y_m)\cong\F^3$ \change{and} $H_*(UX_m\cup Y_m)\cong \F$. Then $1\leq \rank(H_*(X_m))\leq 2$.

\end{lemma}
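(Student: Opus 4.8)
The plan is to mirror almost exactly the proof of Lemma~\ref{alexander1}, but with the roles of $H_*(X_m\cup Y_m)$ and $H_*(UX_m\cup Y_m)$ interchanged in the two exact triangles of Figure~\ref{exacttriangles}. Under the current hypotheses, the specialized triangles read
\begin{center}
\begin{tikzcd}
    \F^n\arrow[dr,"e"]&&H_*(Y_m)\arrow[dr,"f"]\arrow[ll,"c"]&&\F^n\arrow[ll,"d"]\\
&\F\arrow[ur,"b"]&&\F^3\arrow[ur,"a"]
\end{tikzcd}
\end{center}
where I have again written $H_*(X_m)\cong\F^n$. The key structural fact I would reuse verbatim from Ozsv\'ath--Szab\'o is that the composition $c\circ d$ of the two horizontal maps is zero; this is what forces $n$ to be small rather than allowing $H_*(X_m)$ to be arbitrarily large.

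First I would analyze the map $e\colon\F^n\to\F$ (equivalently the behavior of the bottom-left triangle with apex $H_*(UX_m\cup Y_m)\cong\F$). The exactness at $H_*(UX_m\cup Y_m)$ and $H_*(Y_m)$ lets me read off $\rank H_*(Y_m)$ in terms of $n$ and the rank of the relevant connecting map, exactly as in the previous lemma's case split on whether $a$ is injective or trivial. Here I would instead case-split on whether $e$ is surjective or trivial (the only two options since the target is $\F$). In the surjective case, $H_*(Y_m)$ will have rank $n-1$ or $n+1$ depending on the bookkeeping, and tracking the ranks of $\ker d$, $\operatorname{im} b$, and $\ker c$ around both triangles will pin down the rank of $\operatorname{im}(c\circ d)$ as $n-1$ or $n-2$; triviality of $c\circ d$ then forces $n\le 2$, while nontriviality of the Euler-characteristic/parity constraints forces $n\ge 1$. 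In the trivial case I would run the parallel count. The arithmetic is the same genre of rank-chase as in Lemma~\ref{alexander1}, just with the $\F$ and $\F^3$ apexes swapped.

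The main obstacle I anticipate is purely bookkeeping rather than conceptual: because the apex of rank three now sits at $H_*(X_m\cup Y_m)$ (the target of $a$ and source of $f$) rather than at $H_*(UX_m\cup Y_m)$, the dimension of $H_*(Y_m)$ is no longer forced by a single injectivity/triviality dichotomy as cleanly, and I must be careful to feed the constraint through \emph{both} triangles simultaneously, using that $f$ now maps into a rank-three space. Concretely, the subtle point is ensuring that the rank of $\operatorname{im}(c\circ d)$ is computed correctly when the middle term $H_*(Y_m)$ can absorb extra rank from the rank-three apex; I expect the vanishing of $c\circ d$ together with the surjectivity/triviality of $e$ to still squeeze $n$ into $\{1,2\}$, but verifying that no configuration produces $n=0$ or $n\ge 3$ will require checking each of the (at most four) combined cases. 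I would organize the proof as that short case analysis and confirm in each case that $1\le n\le 2$.
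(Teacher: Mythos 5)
Your proposal is correct, but it takes a genuinely different (more hands-on) route than the paper. The paper's entire proof is one line: dualize and invoke the proof of Lemma~\ref{alexander1}. Over the field $\F$, dualizing preserves exactness and all ranks while reversing the arrows in the two triangles of Figure~\ref{exacttriangles}, which exactly interchanges the $\F$ and $\F^3$ apexes; so the previous argument applies to the dual complex and yields $1\le\rank(H_*(X_m))\le 2$. You instead unroll this duality by hand and re-run the rank chase, and your plan does go through: if $e=0$ then $b$ is injective and $c$ is surjective, so $\rank(H_*(Y_m))=n+1$ and $\ker c\cong\F$; exactness of the right-hand triangle then gives $\rank(\operatorname{im}d)=n-1$, and $c\circ d=0$ forces $\operatorname{im}d\subseteq\ker c$, hence $n\le 2$, while $\rank(\operatorname{im}d)\ge 0$ gives $n\ge 1$. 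If $e$ is surjective then $b=0$ and $c$ is injective with $\rank(H_*(Y_m))=n-1$; the right-hand triangle gives $\rank(\operatorname{im}d)=n-2$, and injectivity of $c$ together with $c\circ d=0$ forces $d=0$, hence $n=2$. Two small corrections to your sketch: the lower bound $n\ge 1$ comes from non-negativity of $\rank(\operatorname{im}d)$, not from any Euler-characteristic or parity constraint; and the complication you worry about does not arise, since $\rank(H_*(Y_m))$ is pinned down by the dichotomy on $e$ through the left-hand triangle alone. As for what each approach buys: the paper's duality argument is essentially free once Lemma~\ref{alexander1} is proved and involves no new case analysis, whereas your direct chase is self-contained and displays explicitly how the vanishing of $c\circ d$ squeezes $n$ into $\{1,2\}$ in each case.
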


\begin{proof}
This follows from the proof of the previous lemma, after dualizing, noting that we are working over a field.\end{proof}

Of course we are interested in computing $H_*(\change{\{(}0,m\change{)\}})$ rather than $H_*(X_m)$. We now relate these quantities.

\begin{lemma}\label{Xtom}
We have the following:\begin{enumerate}
    \item Suppose $\rank(H_*(X_m))=2$. Then $\rank(H_*(\change{\{(}0,m)\change{\})})$ is one of $\rank(H_*(\{i<0,j=m\}))-2,\rank(H_*(\{i<0,j=m\}))$ or $\rank(H_*(\{i<0,j=m\}))+2$.
    \item Suppose $\rank(H_*(X_m))=1$. Then $\rank(H_*(\change{\{(}0,m\change{)\}}))$ is either $\rank(H_*(\{i<0,j=m\}))-1$ or $\rank(H_*(\{i<0,j=m\}))+1$.
    \item Suppose $\rank(H_*(X_m))=0$. Then $\rank(H_*(\change{\{(}0,m\change{)\}}))=\rank(H_*(\{i<0,j=m\}))$.
\end{enumerate}
\end{lemma}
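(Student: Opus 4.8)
The plan is to analyze the short exact sequence of chain complexes that relates the column $C(\{i\leq 0, j=m\})=C(X_m)$ to its subcomplex and quotient. Observe that $C(\{i<0,j=m\})$ is a subcomplex of $C(X_m)$, since the differential on the associated graded can only decrease the $i$-coordinate (the differential in the associated graded complex does not increase either filtration level). The corresponding quotient complex is $C(\{(0,m)\})$, which consists of the single lattice point $(0,m)$. This gives the short exact sequence of complexes
\begin{equation*}
0\longrightarrow C(\{i<0,j=m\})\longrightarrow C(X_m)\longrightarrow C(\{(0,m)\})\longrightarrow 0.
\end{equation*}
First I would write down the associated long exact sequence in homology. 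Since the quotient $C(\{(0,m)\})$ is concentrated at a single bigrading, its homology is simply $H_*(\{(0,m)\})$, the homology of the chain group at that point.

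Next I would extract the rank constraint purely from the long exact sequence. Writing $a=\rank(H_*(\{i<0,j=m\}))$, $b=\rank(H_*(X_m))$ and $c=\rank(H_*(\{(0,m)\}))$, the exactness of the three-term cyclic long exact sequence forces $a+c\equiv b\pmod 2$ and, more importantly, the standard rank inequality $|a-c|\leq b\leq a+c$ together with $|b-a|\leq c\leq b+a$. In particular $c$ and $a$ differ from each other by an amount whose absolute value is at most $b$, and the parity of $c-a$ agrees with the parity of $b$. When $b=0$ the connecting maps vanish and the sequence splits into isomorphisms, giving $c=a$, which is case (3). When $b=1$, exactness forces $c=a\pm 1$, which is case (2). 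When $b=2$, the possibilities allowed by the inequalities and parity are exactly $c\in\{a-2,a,a+2\}$, which is case (1).

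The key step is simply reading off these parity-and-range constraints from the long exact sequence, so there is no serious obstacle; the only thing to be careful about is justifying that $C(\{i<0,j=m\})$ really is a subcomplex with quotient $C(\{(0,m)\})$, which follows from the fact that in the associated graded differential the $U$-power (equivalently the $i$-coordinate) is non-increasing along each term, so differentials out of the $i=0$ generator at $(0,m)$ land in strictly smaller $i$, and there are no differentials into $(0,m)$ from within $X_m$. The hardest part, if any, is book-keeping the three cases simultaneously; I would handle this by treating the long exact sequence as a cyclic three-term sequence $\cdots\to H_*(\{i<0,j=m\})\to H_*(X_m)\to H_*(\{(0,m)\})\to\cdots$ and observing that the rank of $H_*(X_m)$ bounds the sum of the ranks of the two connecting homomorphisms, each of which changes the rank of the neighboring term by the rank of its image. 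This yields precisely the claimed trichotomy.
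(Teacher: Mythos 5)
Your proposal is correct and follows essentially the same route as the paper: the paper's proof of this lemma uses exactly the short exact sequence $0\to C(\{i<0,j=m\})\to C(X_m)\to C(\{(0,m)\})\to 0$ and the resulting exact triangle in homology (Figure~\ref{exacttriangle}), from which it states the result "follows directly." Your explicit rank-and-parity bookkeeping (writing $a=x+z$, $b=x+y$, $c=y+z$ in terms of images of the three maps) is just a spelled-out version of that same deduction.
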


\begin{proof}
There is a short exact sequence:
\begin{center}
\begin{tikzcd}
    0\arrow[r]&C\change{(}\{i<0,j=m\}\change{)}\arrow[r]&C\change{(}\{i\leq 0,j=m\}\change{)}\arrow[r]&C(\change{\{(}0,m\change{)\}})\arrow[r]&0
\end{tikzcd}

\end{center}
giving the exact triangle on homology shown in Figure~\ref{exacttriangle}.
\begin{figure}[h]
\begin{tikzcd}
    H_*\change{(}\{i<0,j=m\}\change{)}\arrow[rr,"a"]&&H_*\change{(}\{i\leq 0,j=m\}\change{)}\arrow[dl,"b"]\\&H_*\change{(\{(}0,m\change{)\}})\arrow[ul,"c"]
\end{tikzcd}
\caption{Another exact triangle we will use repeatedly in this section and the next.}\label{exacttriangle}
\end{figure}

The result follows directly.\end{proof}

We proceed now to compute the rank of $H_*(\change{\{(}0,n\change{)\}})$ -- i.e. $\widehat{\HFK}(K,n)$ -- for each $n$.

\begin{lemma}\label{LemmaX1}
Suppose $K$ is an almost $L$-space knot. Then $\rank(\widehat{\HFK}(K,A))\leq 1$ for $|A|>1$.
\end{lemma}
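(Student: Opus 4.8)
The plan is to follow the Ozsv\'ath--Szab\'o induction scheme that has already been set up in this section, running the exact triangles of Figure~\ref{exacttriangles} downward from large Alexander grading. The starting point is that for an almost $L$-space knot, $\widehat{\HF}(S^3_n(K),[i])$ has rank $1$ for all $[i]\neq 0$ and rank $3$ for $[i]=0$ (Lemma~\ref{spin0}), together with the large-surgery identification of $\widehat{\HF}(S^3_n(K),[i])$ with $H_*(C(\{\max(i,j-i)=\text{const}\}))$-type complexes coming from Ozsv\'ath--Szab\'o's surgery formula. Concretely, I would interpret the rank data as saying $H_*(X_m\cup Y_m)$ and $H_*(UX_m\cup Y_m)$ are each $\cong\F$ for every $m$ with $0<|m|$ and $|m|$ large, and that exactly one of them jumps to $\F^3$ in the single $\spin^c$-grading corresponding to $[0]$. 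For $|A|>1$ I want to show that this jump has \emph{not yet occurred}, so that both hypotheses of the ``generic'' situation hold.

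First I would make precise, using the surgery formula, which large-surgery $\spin^c$-group each $H_*(X_m\cup Y_m)$ and $H_*(UX_m\cup Y_m)$ computes, and record that for $m$ with $|m|$ large these are forced to be $\F$ by Lemma~\ref{spin0} (since only the $[0]$-grading has rank $3$, and the $[0]$-grading corresponds to the symmetric Alexander grading $A=0$). Next, in the regime $|A|>1$, I would feed $H_*(X_m\cup Y_m)\cong\F$ and $H_*(UX_m\cup Y_m)\cong\F$ into the exact triangles of Figure~\ref{exacttriangles}. With both flanking groups of rank $1$, the same bookkeeping as in Lemma~\ref{alexander1} --- tracking injectivity/triviality of the maps $a,d$, the rank-$1$ constraints on $b,c$, and the vanishing of the horizontal composite $c\circ d$ --- pins down $\rank(H_*(X_m))$ tightly; in the rank-$1$/rank-$1$ case the box-counting forces $\rank(H_*(X_m))\in\{0,1\}$ rather than the wider range allowed in Lemmas~\ref{alexander1} and~\ref{alexander0}. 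Then I would pass from $H_*(X_m)$ to $H_*(\{(0,m)\})=\widehat{\HFK}(K,m)$ via Lemma~\ref{Xtom}: cases $\rank(H_*(X_m))=0$ and $=1$ give $\rank(\widehat{\HFK}(K,m))$ bounded by the corresponding rank of $H_*(\{i<0,j=m\})$ shifted by at most $1$, and an auxiliary induction (or the symmetry $\widehat{\HFK}(K,m)\cong\widehat{\HFK}(K,-m)$ together with the vanishing of $\widehat{\HFK}$ above the genus) controls $H_*(\{i<0,j=m\})$ so that the final rank is $\leq 1$.

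The main obstacle I anticipate is not the triangle bookkeeping itself but correctly identifying \emph{which} Alexander gradings $m$ correspond to the distinguished $\spin^c$-structure $[0]$, and hence certifying that for $|A|>1$ we are genuinely in the rank-$1$/rank-$1$ situation rather than the exceptional rank-$3$ one. This is exactly the content secured by Lemma~\ref{spin0}'s proof that the rank-$3$ grading is self-conjugate and equal to $[0]$ (ruling out $n/2$ using $n\geq 2g(K)-1$ and the $i\geq g(K)$ vanishing), so I would lean on that lemma to guarantee the jump lives only at $A=0$. A secondary subtlety is that Lemma~\ref{Xtom} only bounds $\widehat{\HFK}(K,m)$ in terms of $H_*(\{i<0,j=m\})$, so I must separately argue that the latter rank is already controlled; I expect to handle this by downward induction on $m$ starting from the top Alexander grading $m=g(K)$, where $H_*(\{i<0,j=m\})$ vanishes, and propagating the bound $\leq 1$ down to $|A|=2$.
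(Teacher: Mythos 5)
Your overall strategy is the same as the paper's: the paper's entire proof of Lemma~\ref{LemmaX1} is the observation that, given Lemma~\ref{spin0}, Ozsv\'ath--Szab\'o's argument from~\cite[Section 3]{ozsvath2005knot} applies verbatim in every Alexander grading where \emph{both} relevant hook complexes $X_m\cup Y_m$ and $UX_m\cup Y_m$ have rank-one homology, which by the large-surgery identification of hooks with $\spin^c$-summands is exactly the range $m\notin\{0,1\}$, hence $|A|>1$ after symmetry. Your identification of which hooks are rank one, and your bookkeeping in the triangles of Figure~\ref{exacttriangles} (which does correctly force $\rank H_*(X_m)\leq 1$ in the rank-$1$/rank-$1$ case), are both sound.

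The gap is in your final step, passing from $\rank H_*(X_m)\leq 1$ to $\rank\widehat{\HFK}(K,m)\leq 1$. Since $H_*(\{i<0,j=m\})\cong H_*(X_{m+1})$ up to a grading shift, your ``auxiliary induction controlling $H_*(\{i<0,j=m\})$'' is automatic from the previous step, but via Lemma~\ref{Xtom} it only yields $\rank\widehat{\HFK}(K,m)\leq \rank H_*(X_m)+\rank H_*(X_{m+1})\leq 2$. To improve $2$ to $1$ you must show that whenever $H_*(X_m)$ and $H_*(X_{m+1})$ are both nonzero, the map $H_*(\{i<0,j=m\})\to H_*(X_m)$ in Figure~\ref{exacttriangle} is injective rather than zero. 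This configuration of consecutive nonzero groups genuinely occurs --- for any staircase (e.g.\ the trefoil at $m=-2$) consecutive $H_*(X_m)$, $H_*(X_{m+1})$ are both $\F$ in gradings where $\widehat{\HFK}$ vanishes, with the map an isomorphism --- and rank bookkeeping cannot force injectivity: the configuration $(H_*(X_m),H_*(Y_m))\cong(\F,\F^2)$ with the map $a$ of Figure~\ref{exacttriangles} equal to zero, sitting below $(H_*(X_{m+1}),H_*(Y_{m+1}))\cong(\F,0)$, is consistent with every exact triangle you invoke at the level of ranks, and it would produce $\rank\widehat{\HFK}(K,m)=2$. What excludes it in Ozsv\'ath--Szab\'o's argument (and hence in the paper's proof) is the Maslov-graded refinement: the hook homologies lie in specific gradings which the maps $a,b,e,f$ preserve and $c,d$ shift by one, and tracking these gradings forces the dichotomy that exactly one of $H_*(X_m)$, $H_*(Y_m)$ is nonzero --- this is precisely the content of Lemma~\ref{oddeven}, also extracted from~\cite{ozsvath2005knot}, and of the grading analysis the paper carries out in Section~\ref{maslov}. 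So your proof needs that graded input; ``box counting'' plus a downward induction on ranks alone is insufficient.
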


\begin{proof}
Given Lemma~\ref{spin0}, this follows directly from work of Ozsv\'ath-Szab\'o in the setting in which $K$ is an $L$-space knot as opposed to an almost $L$-space knot~\cite[Section 3]{ozsvath2005knot}.
\end{proof}

Moreover we have the following:

\begin{lemma}\label{oddeven}
    Suppose $K$ is an almost $L$-space knot. Then there is a $d\in 2\Z$ such that:\begin{enumerate}
        \item $H_*(X_2)\cong 0$, $H_*(Y_2)\cong\F_d$ or;
        \item $H_*(X_2)\cong \F_{d}$, $H_*(Y_2)\cong0$.
    \end{enumerate}

    Moreover case $(1)$ holds if the Maslov grading of the generator of minimal Alexander grading at least $2$ is odd, while condition $(2)$ holds if it is even.
\end{lemma}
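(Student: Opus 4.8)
The plan is to transplant Ozsv\'ath--Szab\'o's analysis of the region of Alexander grading at least $2$, where Lemma~\ref{LemmaX1} guarantees that $K$ looks like an $L$-space knot, and to combine it with the two exact triangles of Figure~\ref{exacttriangles}. Writing $\widehat A_s=C\{\max(i,j-s)=0\}$, a direct inspection of the defining sets shows $C(X_2\cup Y_2)=\widehat A_2$ and $C(UX_2\cup Y_2)=\widehat A_1$. By the large surgery formula these compute $\widehat{\HF}(S^3_n(K),[2])$ and $\widehat{\HF}(S^3_n(K),[1])$ for $n$ large, so since $[1]\neq[0]\neq[2]$, Lemma~\ref{spin0} gives $H_*(X_2\cup Y_2)\cong H_*(UX_2\cup Y_2)\cong\F$.

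The heart of the argument is a pair of rank bounds. Feeding $H_*(X_2\cup Y_2)\cong\F$ into the triangle relating $H_*(Y_2)$, $H_*(X_2\cup Y_2)$ and $H_*(X_2)$ and counting ranks (the middle term has rank one) shows that the connecting map $d\colon H_*(X_2)\to H_*(Y_2)$ has $\rank d=\tfrac12(\rank H_*(X_2)+\rank H_*(Y_2)-1)$; the triangle built from $H_*(UX_2\cup Y_2)\cong\F$, together with $H_*(UX_2)\cong H_*(X_2)$, gives $\dim\ker c=\tfrac12(\rank H_*(Y_2)-\rank H_*(X_2)+1)$ for the connecting map $c\colon H_*(Y_2)\to H_*(UX_2)$. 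Ozsv\'ath--Szab\'o's identity $c\circ d=0$ then forces $\rank d\le\dim\ker c$, which simplifies to $\rank H_*(X_2)\le1$. This argument used only that $H_*(X_m\cup Y_m)\cong H_*(\widehat A_m)$ and $H_*(UX_m\cup Y_m)\cong H_*(\widehat A_{m-1})$ both have rank one, so it applies to every $m\notin\{0,1\}$. Since the conjugation symmetry of $\CFK^\infty$ identifies $C(Y_2)$ with $C(X_{-1})$ up to a $U$-power and grading shift, the case $m=-1$ yields $\rank H_*(Y_2)\le1$ as well.

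With both ranks at most one, the same triangle shows $\rank H_*(X_2)+\rank H_*(Y_2)$ is odd, hence equal to $1$: exactly one of $H_*(X_2),H_*(Y_2)$ is $\F$ and the other vanishes, which is the dichotomy (1)/(2), leaving a single $\F$ in some Maslov grading $d$. To pin down $d$ and the correct case I would track $H_*(X_m)$ by descending induction using the triangle of Lemma~\ref{Xtom}, which relates $H_*(X_m)$, $H_*(X_{m+1})$ shifted by $U$, and $\widehat{\HFK}(K,m)$. The induction starts at $m=g$, where $C(X_g)$ is generated by the top class $e_g$ so that $H_*(X_g)\cong\F$ in the Maslov grading of $e_g$; each step down either preserves the rank-one homology and drops its grading by $2$ (when $\widehat{\HFK}(K,m)=0$) or toggles it (when $\widehat{\HFK}(K,m)\cong\F$). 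Letting $A_0=\min\{A\ge2:\widehat{\HFK}(K,A)\neq0\}$ and $e_{A_0}$ its (rank one) generator, the expected outcome is that in case (2) the survivor is the $j=2$ representative of $e_{A_0}$, of Maslov grading $M(e_{A_0})-2(A_0-2)$, while in case (1) the survivor is the vertical partner of $e_{A_0}$ in $Y_2=\mathcal F_1\widehat{\CFK}(K)$, of Maslov grading $M(e_{A_0})-1$.

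The main obstacle is this last step: one must show $H_*(X_2)\neq0$ precisely when $M(e_{A_0})$ is even, and that the surviving grading $d$ is even in both cases. Concretely this is the assertion that the two types of generator along the staircase in Alexander gradings $\ge2$ alternate in Maslov parity, anchored by the fact that $e_g$ lies in even Maslov grading; I would deduce the latter from $\widehat A_g\simeq\widehat{\CFK}(K)$ (equivalently $\tau(K)=g$ in this range), and then read off from the descending induction that $e_{A_0}$ is a non-bounding horizontal cycle --- so that $[e_{A_0}]$ survives in $H_*(X_2)$, giving case (2) --- exactly when $M(e_{A_0})$ is even, and that otherwise the class surviving in $H_*(Y_2)$ is its vertical partner, of even grading $M(e_{A_0})-1$, giving case (1). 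In the degenerate range $g\le1$ there is no generator of Alexander grading $\ge2$, so $C(X_2)=0$, $H_*(Y_2)\cong\F$, and case (1) holds with $d=0$.
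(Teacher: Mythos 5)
Your first two paragraphs are correct, and they reconstruct exactly the argument the paper is invoking when it cites Ozsv\'ath--Szab\'o's Section 3 together with Lemma~\ref{spin0}: the identifications $C(X_2\cup Y_2)=\widehat A_2$, $C(UX_2\cup Y_2)=\widehat A_1$, the rank count in the two triangles of Figure~\ref{exacttriangles}, the use of the identity $c\circ d=0$, and the symmetry $C(Y_2)\simeq C(X_{-1})$ giving $\rank H_*(X_2)+\rank H_*(Y_2)=1$ are all sound. The gap is in the grading half of the lemma, i.e.\ the claims that $d\in 2\Z$ and that case (1) versus case (2) is governed by the parity of $M(e_{A_0})$ --- precisely the step you flag as ``the main obstacle.''

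The mechanism you propose for that step cannot deliver it. Your descending induction tracks only $H_*(X_m)$ through the triangle of Figure~\ref{exacttriangle}. That triangle does force parity at toggles of the form $\F\to 0$: if $H_*(X_{m+1})\cong\F_a$ and $\widehat{\HFK}(K,m)\cong\F_M$, then (total rank in an exact triangle being even) $H_*(X_m)=0$ and the degree $-1$ connecting map forces $M=a-1$. But at toggles of the form $0\to\F$ --- which is exactly the toggle at $A_0$ in your case (2) --- the triangle reads $0\to H_*(X_m)\to\F_M\to 0$ and places \emph{no} constraint on the parity of $M$: the class $[e_{A_0}]$ survives with whatever grading it has. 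So the alternation of Maslov parities along the staircase is not something you can ``read off from the descending induction''; as your induction stands, a complex with $M(e_{A_0})$ odd and $H_*(X_2)\neq 0$ is not excluded, and that is essentially the statement being proven. The missing idea is to run the induction simultaneously on the pair $\bigl(H_*(X_m),H_*(Y_m)\bigr)$, anchored at $(0,\F_0)$ for $m>g$, using \emph{both} triangles of Figure~\ref{exacttriangles} and the dichotomy at every level $m\geq 2$ (which your rank argument does supply). Concretely, when $H_*(X_{m+1})=0$ one knows inductively $H_*(Y_{m+1})\cong\F_b$ with $b$ even; the triangle containing the map $e$ identifies $H_*(\widehat A_m)\cong\F_b$, and then the triangle containing $a$ at level $m$, where the dichotomy gives $H_*(Y_m)=0$, yields a grading-preserving isomorphism $\F_b\cong H_*(X_m)\cong\F_M$, forcing $M=b$ even. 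This two-sided bookkeeping is Ozsv\'ath--Szab\'o's actual argument; it also repairs your anchor, since $M(e_g)=0$ follows from the dichotomy at level $g$ together with the grading-preserving isomorphism $H_*(\widehat A_g)\cong\F_0\to H_*(X_g)$, not merely from the equality $\widehat A_g=\widehat{\CFK}(K)$.
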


\change{Here, and for the remainder of this paper, the subscript on an $\F$-summand indicates the Maslov index of that summand.}

\begin{proof}
Again this follows directly from work of Ozsv\'ath-Szab\'o in the setting in which $K$ is an $L$-space knot as opposed to an almost $L$-space knot~\cite[Section 3]{ozsvath2005knot} given Lemma~\ref{spin0}.
\end{proof}

It remains to determine $H_*(\{(0,j)\})$ for $|j|\leq 1$.

To do so we combining Lemmas~\ref{alexander1},~\ref{Xtom} and~\ref{LemmaX1} and obtain the following:

\begin{enumerate} 
\item Suppose $\rank(H_*(\change{\{}i<0,j=1\change{\}}))=0$. Then $\rank(H_*(\change{\{}i\leq 0,j=1\change{\}}))=\rank(H_*(\change{\{(}0,1\change{)\}}))$ is $1$ or $2$.

\item Suppose $\rank(H_*(\change{\{}i<0,j=1\change{\}}))=1$. Then:

\begin{enumerate}
      \item If $\rank(H_*(\change{\{}i\leq 0,j=1\change{\}}))=2$ then $\rank(H_*\change{(\{}(0,1)\change{\})})$ is either $1$ or $3$.
    \item If $\rank(H_*(\change{\{}i\leq 0,j=1\change{\}}))=1$ then $\rank(H_*\change{(\{}(0,1)\change{\})})$ is either $0$ or $2$.
 
\end{enumerate}

\end{enumerate}

We seek to exclude the case that $\rank(H_*\change{(\{}(0,1)\change{\})})=3$.

\begin{lemma}
Suppose $K$ is an almost $L$-space knot. Then $\rank(H_*(\change{\{(}0,1\change{)\}}))\neq3$
\end{lemma}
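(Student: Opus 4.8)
The plan is to reduce the statement to the non-vanishing of a single inclusion-induced map, and then to obtain a contradiction from the absolute Maslov gradings. Recall that, by the case analysis immediately preceding the lemma, the only possibility yielding $\rank(H_*(\{(0,1)\}))=3$ is the subcase where $\rank(H_*(\{i<0,j=1\}))=1$ and $\rank(H_*(\{i\leq 0,j=1\}))=\rank(H_*(X_1))=2$. First I would feed these two ranks into the exact triangle of Figure~\ref{exacttriangle} with $m=1$, that is $H_*(\{i<0,j=1\})\xrightarrow{\alpha}H_*(X_1)\to H_*(\{(0,1)\})\to H_*(\{i<0,j=1\})$. A direct count using exactness gives $\rank(H_*(\{(0,1)\}))=3-2\,\rank(\im\alpha)$, so that $\rank(\widehat{\HFK}(K,1))=3$ if and only if $\alpha=0$. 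Thus it suffices to prove that the inclusion-induced map $\alpha$ is nonzero; equivalently, that the generator of $H_*(\{i<0,j=1\})$ does not bound in $C(\{i\leq 0,j=1\})$.

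Next I would pin down the grading of the source. Via the $U$-action one has $H_*(\{i<0,j=1\})\cong H_*(X_2)$, and since this group has rank $1$ in the subcase at hand the genus must be at least $2$ (the rank-$1$ hypothesis fails for genus one, where $X_2=0$), so Lemma~\ref{oddeven} places it in a single \emph{even} Maslov grading. Working in a reduced model, the condition $\alpha=0$ is equivalent to the existence of a horizontal differential from a generator at Alexander grading $1$ onto a $U$-translate representing this class; in that event the generator of $H_*(\{i<0,j=1\})$ splits off $\widehat{\HFK}(K,1)$ as a direct summand whose Maslov grading is forced (by the degree $-1$ of the connecting map) to be odd.

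Finally, I would derive the contradiction by returning to the pair of exact triangles of Figure~\ref{exacttriangles} with $m=1$, which relate $H_*(X_1)$ and $H_*(Y_1)$ to $H_*(\hat{A}_1)\cong\F$ and $H_*(\hat{A}_0)\cong\F^3$ (Lemma~\ref{spin0}), subject to the relation $c\circ d=0$. Carrying the absolute Maslov gradings through these triangles, the extra class forced by $\alpha=0$ cannot be cancelled by any of the triangle maps and must survive to contribute to $H_*(\hat{A}_0)$, pushing its rank above $3$ (equivalently, inflating $H_*(\hat{A}_1)$ beyond $\F$) and contradicting Lemma~\ref{spin0}. I expect the main obstacle to be precisely this last step of grading bookkeeping: one must verify that the surviving class is genuinely not killed, which should require combining the even-grading constraint from Lemma~\ref{oddeven} with conjugation symmetry applied at Alexander grading $-1$ in order to fix all of the relevant Maslov gradings simultaneously.
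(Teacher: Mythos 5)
Your first two steps are sound: the rank count in the triangle of Figure~\ref{exacttriangle} with $m=1$ does show that $\rank(H_*(\{(0,1)\}))=3$ forces the inclusion-induced map $\alpha$ to vanish, and since $H_*(\{i<0,j=1\})\cong H_*(X_2)$ has rank one we are in case (2) of Lemma~\ref{oddeven}, so this class sits in an even Maslov grading and the ``extra'' class in $\widehat{\HFK}(K,1)$ has odd grading $d-1$. The gap is in your third step, which is where the entire content of the lemma lies. The extra class cannot ``push the rank of $H_*(UX_1\cup Y_1)$ above $3$'' or ``inflate $H_*(X_1\cup Y_1)$ beyond $\F$'': the group $\widehat{\HFK}(K,1)$ is not one of the six groups appearing in the triangles of Figure~\ref{exacttriangles} with $m=1$ (the group that appears there is $H_*(X_1)$, which has rank two no matter what), and the rank-three configuration is in fact \emph{consistent} with both triangles, Maslov gradings included. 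Explicitly: the first triangle of Figure~\ref{exacttriangles} with $m=2$ (note $UX_2\cup Y_2=X_1\cup Y_1$ and $H_*(Y_2)=0$ in this case) forces $H_*(X_1\cup Y_1)\cong\F_{d-2}$, and then the assignment
\[
H_*(X_1)\cong\F_{d-2}\oplus\F_q,\qquad H_*(Y_1)\cong\F_{q-1},\qquad H_*(UX_1)\cong\F_{d-4}\oplus\F_{q-2},\qquad H_*(UX_1\cup Y_1)\cong\F_{d-4}\oplus\F_{q-2}\oplus\F_{q-1}
\]
satisfies exactness of both triangles, the grading behaviour of all six maps, the relation $c\circ d=0$, and Lemma~\ref{spin0}. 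So no amount of bookkeeping at Alexander gradings $0$ and $1$ can produce the contradiction you are after; the class you are tracking is simply absorbed as the $\F_{q-1}$ summand story above shows.

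The obstruction actually lives at Alexander grading $-1$, and conjugation symmetry there is not an auxiliary device for ``fixing gradings'' but the engine of the proof; this is exactly what the paper does. One first rules out the subcase $\rank(H_*(\{i\leq 0,j=0\}))=2$ because it would force $\rank(\widehat{\HFK}(K,0))$ to be even. In the remaining subcase, $H_*(\{i<0,j=-1\})\cong H_*(X_0)$ has rank one, while $H_*(\{i\leq 0,j=-1\})\cong H_*(\{i=-1,j\leq 0\})\cong H_*(Y_2)$ has rank at most one (in your situation it is actually zero, again by case (2) of Lemma~\ref{oddeven}), so the triangle of Figure~\ref{exacttriangle} at $m=-1$ gives $\rank(\widehat{\HFK}(K,-1))\leq 2$. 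This contradicts the symmetry of knot Floer homology, which demands $\rank(\widehat{\HFK}(K,-1))=\rank(\widehat{\HFK}(K,1))=3$. Your write-up becomes a proof if you replace the $m=1$ mechanism with this rank comparison at $j=-1$; note that the odd Maslov grading of the extra class, which you work to establish, is never needed --- the contradiction is purely about ranks.
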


\begin{proof}

    If $\rank(H_*\change{(\{}(0,1)\change{\})})=3$ then  $$\rank(H_*(\change{\{}i\leq 0,j=1\change{\}}))=\rank(H_*\change{(}\{i<0,j=0\})\change{)}=2.$$ We have the following cases:\begin{enumerate}
        \item $\rank(H_*\change{(}\{i\leq 0,j=0\})\change{)}=1$ so that $\rank(H_*\change{(\{}(0,0)\change{\})})=1$ or $3$ by Lemma~\ref{Xtom}.
        \item $\rank(H_*\change{(}\{i\leq 0,j=0\})\change{)}=2$ so that $\rank(H_*\change{(\{}(0,0)\change{\})})=0$ $2$ or $4$ by Lemma~\ref{Xtom}, a contradiction since this rank should be odd.
    \end{enumerate}

    It thus remains to exclude the case that $$\rank(H_*(\change{\{}i\leq 0,j=1\change{\}}))=2, \rank(H_*\change{(\{}(0,1)\change{\})})=3$$ and $\rank(H_*\change{(}\{i\leq 0,j=0\})\change{)}=1$. This is straightforward as $$H_*(\change{\{}i\leq0,j=-1\change{\}})\cong H_*(\change{\{}i=-1,j\leq 0\change{\}})\cong H_*(\change{\{}i=0,j\leq 1\change{\}}),$$ which is of rank zero or $1$, as can be seen by applying the exact sequences in Figure~\ref{exacttriangles} to determine $H_*(Y_2)$. Applying the exact triangle in Figure~\ref{exacttriangle} we find that that $\rank(H_*(\change{\{(}0,-1\change{)\}}))$ is $0,1$ or $2$, a contradiction, since it is supposed to be $3$.\end{proof}

We also exclude the case that $H_*(\change{\{(}0,1\change{)\}})\cong 0$:

\begin{lemma}
    Suppose $K$ is an almost $L$-space knot. Then $H_*(\change{\{(}0,1\change{)\}})\not\cong 0$\change{.}
\end{lemma}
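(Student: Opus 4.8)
The plan is to rule out $H_*(\{(0,1)\}) \cong 0$ by combining the global rank count with the structure already established in the preceding lemmas. Recall from Lemma~\ref{spin0} that $\rank(\widehat{\HF}(S^3_n(K),[i]))$ equals $1$ for all $[i] \neq 0$ and equals $3$ for $[i]=0$. By Ozsv\'ath-Szab\'o's large surgery formula, for $n \geq 2g(K)-1$ and $|i| \leq n/2$ the group $\widehat{\HF}(S^3_n(K),[i])$ is computed by the homology of the associated graded subquotient $C(\{i \leq 0, j = i\})$, i.e.\ by $H_*(\{i\leq 0, j = m\})$ after the appropriate reindexing. So the conclusion $\rank(\widehat{\HF}(S^3_n(K),[i]))=1$ for $i \neq 0$ directly pins down the ranks $\rank(H_*(\{i\leq 0, j=m\}))$ for $m \neq 0$.

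First I would record the consequences of Lemma~\ref{spin0} in the language of the filtered subquotients: we must have $\rank(H_*(\{i \leq 0, j = m\})) = 1$ for $m \neq 0$ and $= 3$ for $m = 0$. Next I would invoke the symmetry of $\CFK^\infty$, namely $H_*(\{i = 0, j \leq m\}) \cong H_*(\{i \leq -m, j = 0\})$ (equivalently the Alexander/ $U$-grading flip coming from the conjugation symmetry of knot Floer homology), to translate statements about $Y_m = \{i = 0, j \leq m-1\}$ into statements about vertical columns. The point is that $H_*(\{(0,1)\}) \cong 0$ would force, via the exact triangle of Figure~\ref{exacttriangle}, a constraint relating $H_*(\{i < 0, j = 1\})$ and $H_*(\{i \leq 0, j = 1\})$ that is incompatible with the rank-$1$ requirement at the spin$^c$ grading $[1]$.

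Concretely, I would argue by contradiction. Suppose $H_*(\{(0,1)\}) \cong 0$. Feeding this into the exact triangle of Figure~\ref{exacttriangle} (with $m=1$) shows that the inclusion-induced map $H_*(\{i<0,j=1\}) \to H_*(\{i\leq 0,j=1\})$ is an isomorphism, so $\rank(H_*(\{i<0,j=1\})) = \rank(H_*(\{i\leq 0,j=1\})) = 1$ by the surgery computation. But $H_*(\{i<0,j=1\}) \cong U \cdot H_*(\{i \leq 0, j = 2\})/(\text{lower})$, and more usefully $\rank(H_*(\{i<0,j=1\})) = \rank(H_*(UX_2))$ in the notation of the earlier short exact sequences. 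I would then track this back through the pair of exact triangles in Figure~\ref{exacttriangles} for $m=2$, using Lemma~\ref{alexander1} (respectively Lemma~\ref{alexander0}) together with the parity statement of Lemma~\ref{oddeven}, to derive that the vanishing of $H_*(\{(0,1)\})$ forces $H_*(\{(0,2)\})$ to have the wrong parity or the wrong rank relative to what Lemma~\ref{LemmaX1} and Lemma~\ref{oddeven} permit. The cleanest route is probably to observe that the total rank $\sum_m \rank(H_*(\{(0,m)\}))$ must be odd (it computes $\rank(\widehat{\HFK}(K))$, which is odd since the Euler characteristic is $\pm 1$), and that $\rank(H_*(\{(0,m)\})) = 1$ for $|m| \geq 2$ by Lemma~\ref{LemmaX1} combined with Lemma~\ref{oddeven}; by the symmetry $\rank(H_*(\{(0,1)\})) = \rank(H_*(\{(0,-1)\}))$, so if both vanished the total would drop by an even amount but the central term $\rank(H_*(\{(0,0)\}))$ must then absorb the extra rank needed to realize the rank-$3$ spin$^c$ structure, and a direct check through Figure~\ref{exacttriangle} at $m=0$ shows this cannot happen without violating Lemma~\ref{Xtom}.

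The main obstacle I anticipate is bookkeeping the interaction between the two grading directions correctly: the vanishing hypothesis is about the single box $\{(0,1)\}$, but the rank constraints from Lemma~\ref{spin0} live on the columns $\{i \leq 0, j = m\}$, and bridging them requires careful use of both the short exact sequence of Figure~\ref{exacttriangle} and the conjugation symmetry. I would want to be especially careful that the symmetry identification $H_*(\{(0,1)\}) \cong H_*(\{(0,-1)\})$ is applied with the correct Maslov-grading shift (so that the parity argument of Lemma~\ref{oddeven} is genuinely being contradicted, not merely a rank count), since the whole force of the contradiction comes from matching parities dictated by $d \in 2\Z$ in Lemma~\ref{oddeven} against the odd total rank of $\widehat{\HFK}(K)$.
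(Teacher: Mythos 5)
Your overall strategy (contradiction via the exact triangles, parity, and symmetry) draws on the same toolkit as the paper, but two of your key steps fail as stated. First, the large surgery formula does not compute $\widehat{\HF}(S^3_n(K),[m])$ from the ray $C(\{i\leq 0,j=m\})=C(X_m)$: it computes it from the hook-shaped subquotient $C(\{\max(i,j-m)=0\})=C(X_m\cup Y_m)$. So Lemma~\ref{spin0} pins down $\rank(H_*(X_m\cup Y_m))$ (namely $1$ for $m\neq 0$ and $3$ for $m=0$), not $\rank(H_*(\{i\leq 0,j=m\}))$. Your asserted consequence that $\rank(H_*(\{i\leq 0,j=m\}))=1$ for $m\neq 0$ and $=3$ for $m=0$ is false --- indeed it contradicts Lemma~\ref{alexander0}, which shows $1\leq \rank(H_*(X_0))\leq 2$; the whole point of Lemmas~\ref{alexander1} and~\ref{alexander0} is that the surgery groups constrain the column homologies only up to this ambiguity. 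Consequently your claim that the vanishing of $H_*(\{(0,1)\})$ is ``incompatible with the rank-$1$ requirement at the $\spin^c$ grading $[1]$'' evaporates: that vanishing is perfectly consistent with everything known in Alexander grading $1$, where it simply forces $\rank(H_*(\{i<0,j=1\}))=\rank(H_*(\{i\leq 0,j=1\}))=1$, one of the cases the analysis preceding this lemma allows.

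Second, your ``cleanest route'' settles only half of the resulting case analysis. Granting $H_*(\{i\leq 0,j=1\})\cong\F$, hence $H_*(\{i<0,j=0\})\cong\F$, Lemma~\ref{alexander0} leaves two cases. If $\rank(H_*(\{i\leq 0,j=0\}))=1$, the triangle of Figure~\ref{exacttriangle} at $m=0$ forces $\rank(H_*(\{(0,0)\}))\in\{0,2\}$, and your parity observation (that $\rank(\widehat{\HFK}(K,0))$ must be odd) does yield a contradiction; this is the paper's first case. But if $\rank(H_*(\{i\leq 0,j=0\}))=2$, the same triangle gives $\rank(H_*(\{(0,0)\}))\in\{1,3\}$ --- both odd and both consistent with Lemma~\ref{Xtom} --- so there is no contradiction ``at $m=0$,'' contrary to what you claim. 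Closing this case requires leaving Alexander grading $0$: one notes that $H_*(\{i<0,j=-1\})\cong\F^2$ (the $U$-translate of $H_*(\{i\leq 0,j=0\})$), while $H_*(\{i\leq 0,j=-1\})\cong H_*(\{i=-1,j\leq 0\})$ has rank at most $1$ by the symmetry of $\CFK^\infty$ and Lemma~\ref{oddeven}; the triangle at $m=-1$ then forces $\rank(H_*(\{(0,-1)\}))\in\{1,2,3\}$, contradicting $\rank(H_*(\{(0,-1)\}))=\rank(H_*(\{(0,1)\}))=0$. This step is absent from your proposal. (A smaller inaccuracy: Lemma~\ref{LemmaX1} gives $\rank(\widehat{\HFK}(K,m))\leq 1$ for $|m|\geq 2$, not $=1$; gaps do occur, though this does not affect the parity of the grading-zero rank.)
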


We note that if we counted all knots which admit a positive $L$-space surgery as almost $L$-space knots then this lemma would be false. In particular there exist $L$-space knots $K$ with $H_*(\change{\{(}0,1\change{)\}})\cong 0$. For any such $K$, $\rank(\widehat{\HF}(S^3_{2g(K)-2}(K)))=2g(K)$.

\begin{proof}
Suppose $H_*(\change{\{(}0,1\change{)\}})\cong 0$. Then we must have that $H_*(\change{\{}i\leq 0,j=1\change{\}})\cong H_*(\change{\{}i<0,j=0\change{\}})\cong\F$. We have two cases according to Lemma~\ref{alexander0}. Applying the exact triangle in Figure~\ref{exacttriangle} in each instance  we have that:\begin{enumerate}
    \item If $\rank(H_*(\change{\{}j=0,i\leq 0\change{\}}))=1$ then $\rank(H_*\change{(\{}(0,0)\change{\})})$ is $0$ or $2$, both of which are even, a contradiction.
    \item If $\rank(H_*(\change{\{}j=0,i\leq 0\change{\}}))=2$ then $\rank(H_*(\change{\{}j=-1,i<0\change{\}}))=2$. Since $\rank(H_*(\change{\{}j=-1,i\leq 0\change{\}}))=\rank(H_*(\change{\{}i=-1,j\leq 0\change{\}}))\leq 1$ as in the previous lemma, it follows that $H_*(\change{\{(}0,-1\change{)\}})$ is either $1$, $2$ or $3$, a contradiction.
\end{enumerate} \end{proof}

We are thus left with the cases:

\begin{enumerate} 
\item $\rank(H_*(\change{\{}i<0,j=1\change{\}}))=0$ and $\rank(H_*(\change{\{}i\leq 0,j=1\change{\}}))=\rank(H_*\change{(\{}(0,1)\change{\})})$ is $1$ or $2$

\item $\rank(H_*(\change{\{}i<0,j=1\change{\}}))=1$ and $\rank(H_*(\change{\{}i\leq 0,j=1\change{\}}))=2$ and $\rank(H_*\change{(\{}(0,1)\change{\})})=1$.

\item $\rank(H_*(\change{\{}i<0,j=1\change{\}}))=1$ and $\rank(H_*(\change{\{}i\leq 0,j=1\change{\}}))=1$ and $\rank(H_*\change{(\{}(0,1)\change{\})})=2$.

\end{enumerate}

We now compute $H_*\change{(\{}(0,0)\change{\})}$, again by cases according to whether $\rank(H_*(\change{\{}i\leq 0,j=1\change{\}}))=\rank(H_*(\change{\{}i<0,j=0\change{\}}))$ is $1$ or $2$:

\begin{enumerate}
             \item If $\rank(H_*(\change{\{}i<0,j=0\change{\}}))=2$ then $\rank(H_*\change{(\{}(0,0)\change{\})})$ is $0,1,2,3$ or $4$ by Lemma~\ref{Xtom}. The even cases are excluded as $\rank(H_*\change{(\{}(0,0)\change{\})})$ must be odd and we must have that $\rank(H_*(\change{\{}i\leq 0,j=0\change{\}}))=1$.
            \item\label{note:splits} If $\rank(H_*(\change{\{}i<0,j=0\change{\}}))=1$ then $\rank(H_*\change{(\{}(0,\change{0})\change{\})})$ is $0,1,2$ or $3$ by Lemma~\ref{Xtom}. The even cases are excluded as before and we must have that $\rank(H_*(\change{\{}i\leq 0,j=0\change{\}}))=2$. 
        \end{enumerate}

In sum we have the following possibilities; $\widehat{\HFK}(K,1)\cong \F$ or $\F^2$ while $\widehat{\HFK}(K,0)\cong \F$ or $\F^3$.

 \section{Maslov Gradings}\label{maslov}
We now proceed to compute the Maslov gradings of the bigraded vector spaces produced in the previous section. We proceed again by cases, following the argument given by Ozsv\'ath-Szab\'o for their corresponding step in the $L$-space knot case. Essentially this amounts to keeping track of the Maslov gradings in the exact triangles shown in Figure~\ref{exacttriangles} and Figure~\ref{exacttriangle}. Specifically we apply the fact that the maps $f,a,e,b$ in Figure~\ref{exacttriangles} preserve the Maslov grading, while maps $c$ and $d$ each lower it by $1$. Likewise we use the fact that in Figure~\ref{exacttriangle} the maps $a$ and $b$ preserve the Maslov grading while map $c$ decreases it by $1$.

We assume throughout this section that the genus of the almost $L$-space knot in question is strictly greater than one. We are safe in this assumption as genus $1$ almost $L$-space knots are rank at most two in their maximal Alexander grading and the only such knots are $T(2,-3)$, the mirror of $5_2$ and the figure eight knot by results of Ghiggini~\cite{ghiggini2008knot} and Baldwin-Sivek~\cite{baldwin2022floer} -- see Corollary~\ref{genus1class}.

\begin{lemma}\label{maslovgradingcomp}
Suppose $K$ is an almost $L$-space knot of genus at least two. Let $x$ be the generator of lowest Alexander grading strictly greater than $1$. Let $m$ be the Maslov grading of $x$ and $A$ be its Alexander grading. We have that either:

 $m$ is odd and one of the following holds:

\begin{enumerate}
    \item[\textbf{1ai)}] $ \widehat{\HFK}(K,1)\cong\F_{m+1}$ and $\widehat{\HFK}(K,0)\cong\F_{m}$\change{,}
    
    \item[\textbf{1aii)}] $\widehat{\HFK}(K,1)\cong\F_d$ and $\widehat{\HFK}(K,0)\cong\F_{m-1}\oplus\F_{d-1}\oplus\F_{d-1}$\change{,}

 \item[\textbf{1bi)}] $\widehat{\HFK}(K,1)\cong\F_{m-1}\oplus\F_{m-2}$ and $\widehat{\HFK}(K,0)\cong\F_{m-3}$\change{,}
    
    \item[\textbf{1bii)}] \change{There exists an integer $a$ such that} $\widehat{\HFK}(K,1)\cong\F_a\oplus\F_{m-1}$ and $\widehat{\HFK}(K,0)\cong\F_{a-1}\oplus\F_{m-2}\oplus\F_{a-1}$\change{;}

\end{enumerate}
 or $m$ is even and one of the following holds:
\begin{enumerate}

  \item[\textbf{2ai)}]
    
    $\widehat{\HFK}(K,1)\cong\F_{m+1-2A}$ and $\widehat{\HFK}(K,0)\cong\F_{m-2A}$\change{,}

    \item[\textbf{2aii)}] $\widehat{\HFK}(K,1)\cong\F_b$ and $\widehat{\HFK}(K,0)\cong \F_{m-2A+1}\oplus \F_{b-1}\oplus\F_{b-1}$\change{,}

\item[\textbf{2bi)}] $\widehat{\HFK}(K,1)\cong\F_{m-2A+1}\oplus\F_{m-2A}$, $\widehat{\HFK}(K,0)\cong\F_{m-2A+1}$\change{,}

    \item[\textbf{2bii)}] \change{There exists an integer $b$ such that }$\widehat{\HFK}(K,1)\cong\F_b\oplus\F_{m-2A+3}$, $\widehat{\HFK}(K,0)\cong\F_{b-1}\oplus\F_{b-1}\oplus \F_{m-2A+2}$ \change{.}
\end{enumerate}

\end{lemma}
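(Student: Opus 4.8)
The plan is to rerun the case analysis of Section~\ref{nomaslov}, now carrying the Maslov grading through every arrow. The bookkeeping rules are exactly those recorded above: in the triangles of Figure~\ref{exacttriangles} the maps $f,a,e,b$ preserve the Maslov grading while the connecting maps $c,d$ lower it by one, and in Figure~\ref{exacttriangle} the maps $a,b$ preserve the grading while $c$ lowers it by one. To these I would add the two symmetries of $\CFK^\infty(K)$ already used implicitly in Section~\ref{nomaslov}: the conjugation symmetry $H_*(\{i=s,\,j\leq t\})\cong H_*(\{i\leq t,\,j=s\})$, which preserves the Maslov grading, and the $U$-translation $H_*(\{i=s,\,j=t\})\cong H_*(\{i=s+1,\,j=t+1\})$, which raises it by two. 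The inputs are the rank computations of Section~\ref{nomaslov}, namely $\widehat{\HFK}(K,1)\cong\F$ or $\F^2$ and $\widehat{\HFK}(K,0)\cong\F$ or $\F^3$, together with Lemma~\ref{oddeven}, which says that the parity of $m$ decides whether $H_*(X_2)\cong 0$ and $H_*(Y_2)\cong\F_d$ (when $m$ is odd) or $H_*(X_2)\cong\F_d$ and $H_*(Y_2)\cong 0$ (when $m$ is even), for some $d\in 2\Z$.

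First I would fix this base Maslov grading in terms of $m$ and $A$. Since $\rank(\widehat{\HFK}(K,B))\leq 1$ for $|B|>1$ by Lemma~\ref{LemmaX1}, the complex is staircase-like above Alexander grading one, and $x$ is its lowest corner, sitting at bigrading $(0,A)$ with Maslov grading $m$. In the odd case $H_*(Y_2)$ is supported in the column $i=0$, so no $U$-translation is needed to compare it with $x$ and $d$ is tied to $m$ by the vertical differential alone; this is why the odd-case formulas involve only $m$. In the even case $H_*(X_2)$ is instead a horizontal homology supported off the column $i=0$, and bringing it over that column by the conjugation symmetry followed by a $U$-translation whose magnitude is governed by $A$ forces the $-2A$ shift into every even-case formula. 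This step is where the asymmetry between the $m$ and the $m-2A$ normalizations is born.

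With the base grading fixed the remaining analysis is mechanical. The split into cases (a) and (b) records whether $\rank(\widehat{\HFK}(K,1))$ is $1$ or $2$, and the split into (i) and (ii) records whether $\rank(\widehat{\HFK}(K,0))$ is $1$ or $3$; these are precisely the surviving possibilities from Section~\ref{nomaslov}. In each case I would feed the known ranks into the triangles of Figures~\ref{exacttriangles} and~\ref{exacttriangle}: exactness together with the ranks determines which maps are isomorphisms, which are injective or surjective, and which vanish, and then the grading rules above propagate the Maslov grading from $x$ down through Alexander gradings $2$, $1$ and $0$. The connecting maps $c$ and $d$ are the source of the successive $m-1$, $m-2$ and $m-3$ shifts appearing in the statement.

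The main obstacle will be the rank-three cases \textbf{1aii}, \textbf{1bii} and their even analogues, where $\widehat{\HFK}(K,0)$ must be resolved into three summands. Two of these come from a box and must be shown to share a single Maslov grading -- a grading $d-1$ (respectively $a-1$ or $b-1$) that is genuinely \emph{not} determined by $m$ and $A$ at this stage and hence appears as a free parameter -- while the third, staircase, summand is pinned to $m$. The care lies in using exactness to force exactly two of the three gradings to coincide and to separate the box contribution from the staircase contribution. The freedom of the box grading is to be expected: the absolute position of the box relative to the staircase is only cut down later, for box-plus-staircase complexes by Proposition~\ref{prop:delta0}. Everything outside these cases is routine propagation of gradings through the two diagrams.
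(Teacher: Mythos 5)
Your proposal is correct and follows essentially the same route as the paper's proof: a case analysis governed by the parity of $m$ (via Lemma~\ref{oddeven}) and by the ranks of $\widehat{\HFK}(K,1)$ and $\widehat{\HFK}(K,0)$ from Section~\ref{nomaslov}, with Maslov gradings propagated through the triangles of Figures~\ref{exacttriangles} and~\ref{exacttriangle} and pinned down by the conjugation and $U$-translation symmetries of $\CFK^\infty(K)$, yielding exactly the free parameters you predict in the rank-three cases. The only difference is organizational and immaterial: you fix the ``base'' grading in terms of $m$ and $A$ up front, whereas the paper extracts the same relations at the end of each case by running the exact triangles down to Alexander grading $-A$, where the symmetry forces the grading $m-2A$.
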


\begin{proof}
We proceed by cases\change{. In each case, we apply the exact triangles~\ref{exacttriangles} and~\ref{exacttriangle} as outlined above. We give a more detailed description of the computation in case 1ai) and suppress some of the details in subsequent cases.}
\begin{enumerate}
  \item Suppose $m$ is odd and, per Lemma~\ref{oddeven}, $H_*(\change{\{}i<0,j=1\change{\}})\cong 0$. \change{The the Maslov grading preserving map $b: H_*(\{i\leq 0,j=1\})\to H_*(\{(0,1)\})$ from exact triangle~\ref{exacttriangle} with $m=1$ is an isomorphism.}\begin{enumerate}
        \item Suppose $H_*\change{(\{}(0,1)\change{\})}\cong\F_d$. It follows that $H_*(\change{\{}i\leq 0,j=1\change{\}})\cong\F_d$\change{.} \change{Using the $(i, j)$ symmetry of $\CFK^\infty$, we have that $$H_*(\change{\{}i\leq 0,j=1\change{\}})\cong H_*(\change{\{}j\leq 0,i=1\change{\}})\cong\F_d,$$ so that $H_*(\change{\{}j\leq -1,i=0\change{\}})\cong\F_{d-2}$, and indeed $H_*(\change{\{}i<0,j=0\change{\}})\cong\F_{d-2}$, again by the $(i,j)$-symmetry of $\CFK^\infty(K)$. In particular we are in case~\ref{note:splits} from the final analysis in Section~\ref{nomaslov}. There are two subcases, according to whether $\rank(H_*(\{(0,0)\}))$ is one or three.}

        \begin{enumerate}
        \item Suppose $H_*\change{(\{}(0,0)\change{\})}\cong\F_a$. \change{As noted in case~\ref{note:splits} from the final analysis in Section~\ref{nomaslov},} $H_*(\change{\{}i\leq 0,j=0\change{\}})\cong\F_b\oplus\F_c$ \change{for some pair of integers $b,c$. Thus the exact sequence~\ref{exacttriangle} with $m=0$ splits with the injective map $H_*(\{i<0,j=0\})\inj H_*(\{i\leq 0,j=0\})$ and the surjective map $H_*(\{i\leq 0,j=0\})\surj H_*(\{(0,0)\})$ both preserving the Maslov grading. Thus, without loss of generality, we have the following equalities of integers;} $a=b$ and $d-2=c$. It follows that $H_*(\change{\{}i<0,j=-1\change{\}})\cong \F_{a-2}\oplus\F_{d-4}$. \change{Now, from the fact that $H_*(\{(0,1)\})\cong \F_{d}$, we can deduce from the symmetry properties of $\CFK^\infty(-)$ that $H_*(\{(0,-1)\})\cong \F_{d-2}$. Moreover, again by symmetry properties of $\CFK^\infty(-)$, we have that $$H_*(\{j=-1,i\leq 0\})\cong H_*(\{i=-1,j\leq 0\})\cong H_*(Y_2),$$ which is rank one per Lemma~\ref{oddeven} and the assumption that $m$ is odd. Thus exact triangle~\ref{exacttriangle} splits for $m=-1$. Since the map $$H_*(\{(0,-1)\})\to H_*(\change{\{}i<0,j=-1\change{\}})\cong \F_{a-2}\oplus\F_{d-4}$$ lowers the Maslov index by one, it} follows in turn that $d-3=a-2$. \change{ It likewise follows that} $H_*(\change{\{}i\leq 1,j=-1\change{\}})\cong \F_{d-4}$. \change{Now note that $H_*(\{(0,n)\})\cong H_*(\{(0,-n)\})\cong 0$ for $-1>n>-A$ and $H_*(\{(0,-A)\})\cong\F_{m-2A}$ by the symmetry properties of $\CFK^\infty(-)$. Thus by repeated application of the exact triangle~\ref{exacttriangle} decreasing $m$ from $-1$ to $-A$, we can conclude that} $d-4-2(A-1)=m-2A-1$, so that $d=m+1$.

        \item  Suppose $H_*\change{(\{}(0,0)\change{\})}\cong\F_a\oplus\F_b\oplus \F_c$ \change{ for some triple of integers $a,b,c$}. Then without loss of generality $d-2=c-1$  and $H_*(\change{\{}i\leq 0,j=0\change{\}})\cong \F_a\oplus \F_b$, so that $H_*(\change{\{}i< 0,j=-1\change{\}})\cong \F_{a-2}\oplus \F_{b-2}$. It follows without loss of generality that $d-3=b-2$. Indeed, $H_*(\change{\{}i\leq 1,j=-1\change{\}})\cong \F_{a-2}$. We then find that $a-2-2(A-1)=m-2A-1$, so that $a=m-1$.

        \end{enumerate}

        \item Suppose $H_*\change{(\{}(0,1)\change{\})}\cong\F_a\oplus\F_b$\change{, for some pair of integers $a,b$}. Then $H_*(\change{\{}i\leq 0,j=1\change{\}})\cong\F_a\oplus\F_b$.
        
        \begin{enumerate}
            \item If $H_*\change{(\{}(0,0)\change{\})}\cong \F_c$ \change{for some integer $c$,} then $H_*(\change{\{}i\leq 0,j=0\change{\}})\cong\F_{b-2}$, $c-1=a-2$. It follows that $H_*(\change{\{}i<0,j=-1\change{\}})\cong\F_{b-4}$. Thus $a-3=b-4$. As before we find that $a=m-1$.

            \item If $H_*\change{(\{}(0,0)\change{\})}\cong\F_c\oplus\F_d\oplus\F_e$ \change{for some triple of integers $c,d,e$, }then without loss of generality $e-1=a-2, d-1=b-2$ and $H_*(\change{\{}i\leq 0,j=0\change{\}})\cong\F_{c}$. It follows that $c-2=b-3$ or $a-3$. Suppose $c-2=b-3$. Then we find that $a-2-2(A-1)=m-2A-1$ so that $a=m-1$. Suppose $c-2=a-3$ then we find that $b=m-1$.

        \end{enumerate}
    \end{enumerate}
   \item Suppose $m$ is even and, per Lemma~\ref{oddeven} $H_*(\change{\{}i<0,j=1\change{\}})\cong \F_a$ \change{for some integer $a$}. Then $a=m-2(A-1)$. Moreover:
       \begin{enumerate}
        \item Suppose $H_*\change{(\{}(0,1)\change{\})}\cong\F_b$ \change{for some integer $b$}. Then $H_*(\change{\{}i\leq 0,j=1\change{\}})\cong\F_a\oplus\F_b$ and $H_*(\change{\{}i<0,j=0\change{\}})\cong\F_{a-2}\oplus\F_{b-2}$
        \begin{enumerate}
            \item Suppose $H_*\change{(\{}(0,0)\change{\})}\cong\F_c$ \change{for some integer $c$}. Suppose  $a-2=c-1$ and $H_*(\change{\{}i\leq 0,j=0\change{\}})\cong\F_{b-2}$. It follow that $H_*(\change{\{}i< 0,j=-1\change{\}})\cong\F_{b-4}$, a contradiction. Thus $b-2=c-1$, and $H_*(\change{\{}i\leq 0,j=0\change{\}})\cong\F_{a-2}$. It follows that $b-3=a-4$.

            \item Suppose $H_*\change{(\{}(0,0)\change{\})}\cong F_c\oplus\F_d\oplus\F_e$ \change{for some triple of integers $c,d,e$}. Then without loss of generality $c-1=a-2,d-1=b-2$ and $H_*(\change{\{}i\leq 0,j=0\change{\}})\cong\F_e$. It follows that $e-2$ is $b-3$.
        \end{enumerate}

        \item Suppose $H_*\change{(\{}(0,1)\change{\})}\cong\F_b\oplus\F_c$ \change{for some pair of integers $b,c$}. It follows without loss of generality that $c-1=a$ and $H_*(\change{\{}i\leq 0,j=1\change{\}})\cong\F_b$ while $\F_{b-2}\cong H_*(\change{\{}i<0,j=0\change{\}})$.

        \begin{enumerate}
            \item Suppose $H_*\change{(\{}(0,0)\change{\})}\cong\F_d$ \change{for some integer $d$}. Then $H_*(\change{\{}i\leq 0,j=0\change{\}})\cong\F_{b-2}\oplus\F_d$ and $H_*(\change{\{}i< 0,j=0\change{\}})\cong\F_{b-4}\oplus\F_{d-2}$. It follows that $b-3=d-2$, $a-3=d-2$.

            \item Suppose $H_*\change{(\{}(0,0)\change{\})}\cong\F_d\oplus\F_e\oplus\F_f$ \change{for some triple of integers $d,e,f$}. It follows that $f-1=b-2$ without loss of generality. Similarly $H_*(\change{\{}i\leq 0,j=0\change{\}})\cong\F_d\oplus\F_e$. Thus, $H_*(\change{\{}i<0,j=-1\change{\}})\cong\F_{d-2}\oplus\F_{e-2}$. Thus $\{d-2,e-2\}=\{b-3,c-3\}$ i.e. without loss of generality $d=b-1,e=c-1$.
        
        \end{enumerate}
    \end{enumerate}
       
\end{enumerate}\end{proof}

\section{The \change{filtered} chain homotopy type of $\CFK^\infty$}\label{chain}

In this section we seek to determine the filtered chain homotopy type of $\CFK^\infty$ of almost $L$-space knots. We have $8$ cases to deal with according to Lemma~\ref{maslovgradingcomp}, although we will see that there is a certain amount of degeneracy amongst these cases.

We again assume throughout this section that the genus of the almost $L$-space knot in question is strictly greater than one\change{, as as we did Section~\ref{maslov}}.

\begin{lemma}\label{grading11} Let $K$ be an almost $L$-space knot of genus strictly greater than $1$. Then \change{there is an even integer $d$ such that one of the two following statements hold}:\begin{enumerate}
    \item\label{p1} $H_*(\change{\{}i=0,j>1\change{\}})\cong \F_0$,  $H_*(\change{\{}i=0,j<-1\change{\}})\cong\F_{\change{d-4}} $ and $H_*(\change{\{}i=0,|j|\leq 1\change{\}})\cong\F_{\change{d-3}}$.
    \item\label{p2}    $H_*(\change{\{}i=0,j>1\change{\}})\cong \F_0\oplus \F_{\change{d+1}}$, $H_*(\change{\{}i=0,j<-1\change{\}})\cong 0$ and $H_*(\change{\{}i=0,|j|\leq 1\change{\}})\cong\F_{\change{d}}$.
\end{enumerate}

\end{lemma}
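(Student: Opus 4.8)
The plan is to realize the three complexes in the statement as graded pieces of the Alexander filtration $C(\{i=0,j<-1\})\subset C(\{i=0,j\leq1\})\subset C(\{i=0\})=\widehat{\CFK}(K)$, whose total homology is $\widehat{\HF}(S^3)\cong\F_0$, and to feed in Lemma~\ref{oddeven}, which already computes two of the relevant homologies. Indeed, by definition $Y_2=\{i=0,j\leq1\}$, so $H_*(\{i=0,j\leq1\})=H_*(Y_2)$ is given by Lemma~\ref{oddeven}; and I would identify $H_*(\{i=0,j<-1\})$ with $H_*(X_2)$ shifted down by $4$ in Maslov grading, by applying the conjugation symmetry of $\CFK^\infty$ (which exchanges the $i$- and $j$-filtrations and preserves the Maslov grading) to pass from $\{i=0,j\leq-2\}$ to $\{j=0,i\leq-2\}$, and then the Maslov-degree $-4$ isomorphism $U^2$ to identify the latter with $X_2=\{i\leq0,j=2\}$.

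With these two inputs I would split along the dichotomy of Lemma~\ref{oddeven}. When the generator of minimal Alexander grading at least $2$ has even Maslov grading we have $H_*(X_2)\cong\F_d$ and $H_*(Y_2)\cong0$; then $H_*(\{i=0,j<-1\})\cong\F_{d-4}$ and $H_*(\{i=0,j\leq1\})\cong0$, so the long exact sequence of $0\to C(\{i=0,j<-1\})\to C(\{i=0,j\leq1\})\to C(\{i=0,|j|\leq1\})\to0$ yields $H_*(\{i=0,|j|\leq1\})\cong\F_{d-3}$ (the connecting map drops the Maslov grading by one), and the long exact sequence of $0\to C(\{i=0,j\leq1\})\to C(\{i=0\})\to C(\{i=0,j>1\})\to0$ yields $H_*(\{i=0,j>1\})\cong\F_0$; this is conclusion~\ref{p1}. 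When instead that Maslov grading is odd we have $H_*(X_2)\cong0$ and $H_*(Y_2)\cong\F_d$, hence $H_*(\{i=0,j<-1\})\cong0$ and $H_*(\{i=0,j\leq1\})\cong\F_d$; the first long exact sequence now gives $H_*(\{i=0,|j|\leq1\})\cong\F_d$, and the second gives $H_*(\{i=0,j>1\})\cong\F_0\oplus\F_{d+1}$, which is conclusion~\ref{p2}. In both cases $d$ is the even integer furnished by Lemma~\ref{oddeven}.

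The step that requires genuine care, and which I expect to be the main obstacle, is pinning down the $\F_0$ summand of $H_*(\{i=0,j>1\})$ in the second long exact sequence: one must know that the Maslov grading $0$ generator of $\widehat{\HF}(S^3)\cong H_*(\{i=0\})$ survives into the top quotient $C(\{i=0,j>1\})$, equivalently that $\tau(K)>1$. In the even case this is automatic, since $C(\{i=0,j\leq1\})$ is then acyclic and the quotient map $H_*(\{i=0\})\to H_*(\{i=0,j>1\})$ is an isomorphism. In the odd case the acyclicity of $C(\{i=0,j<-1\})$ already forbids the surviving class from being supported in Alexander grading below $-1$; what remains is to exclude the degenerate possibility that it is supported in $\{i=0,|j|\leq1\}$, which would force $d=0$ and $H_*(\{i=0,j>1\})\cong0$. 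I would rule this out using the standing hypothesis $g(K)>1$ together with the surgery-rank constraints of Lemma~\ref{spin0} and the Maslov-grading data of Lemma~\ref{maslovgradingcomp}, which are incompatible with a genus $>1$ almost $L$-space knot having $|\tau(K)|\leq1$. Once survival is secured, the remainder is a routine rank-and-grading chase through the two short exact sequences.
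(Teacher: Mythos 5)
Your skeleton is exactly the paper's: the same identification $H_*(\{i=0,j<-1\})\cong H_*(\{j=0,i<-1\})\cong H_*(X_2)[-4]$ via the $(i,j)$-symmetry and $U^2$, the identification $H_*(\{i=0,j\leq 1\})=H_*(Y_2)$, the dichotomy of Lemma~\ref{oddeven}, and the same two short exact sequences with their exact triangles. Your even case is complete and correct, and you have correctly isolated the crux of the odd case: one must show that the grading-preserving map $a\colon H_*(\{i=0,j\leq 1\})\cong\F_d\to H_*(\{i=0\})\cong\F_0$ vanishes, so that the $\F_0$ survives into $H_*(\{i=0,j>1\})$.

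The gap is in how you dispose of that crux. The paper settles it with one external input: Ozsv\'ath--Szab\'o show (proof of Theorem 1.2 of~\cite{ozsvath2005knot}) that the even integer $d$ is negative, so the degree-preserving map $\F_d\to\F_0$ is zero for grading reasons. You instead propose to exclude the degenerate possibility ($d=0$ with $a\neq 0$, equivalently $|\tau(K)|\leq 1$) by appealing to Lemma~\ref{spin0} and Lemma~\ref{maslovgradingcomp}, but you never give that argument, and those lemmas do not obviously contain it: they constrain the bigraded groups $\widehat{\HFK}(K,0)$ and $\widehat{\HFK}(K,1)$, whereas the degenerate case is a statement about the \emph{inclusion-induced map} into $H_*(\{i=0\})$, i.e.\ about $\tau(K)$, which no lemma of Sections~\ref{nomaslov}--\ref{maslov} computes. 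Concretely, case 1ai) of Lemma~\ref{maslovgradingcomp} with $m=1$ gives $\widehat{\HFK}(K,1)\cong\F_2$, $\widehat{\HFK}(K,0)\cong\F_1$, $\widehat{\HFK}(K,-1)\cong\F_0$, and nothing in the cited statements prevents the class in Alexander grading $-1$, Maslov grading $0$, from being the generator of $\widehat{\HF}(S^3)$ --- exactly the configuration you need to kill. Ruling it out requires genuinely new input: either the Ozsv\'ath--Szab\'o grading result the paper cites, or an honest argument (e.g.\ showing that pairing off the generators in Alexander gradings $>1$ under the vertical differential, together with the $(i,j)$-symmetry, violates $\partial^2=0$, or a $d$-invariant comparison). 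As written, the hardest step of the lemma is asserted rather than proved.
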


\begin{proof}
\change{Each case in this Lemma will correspond to one of the two cases in Lemma~\ref{oddeven} namely the case that $(H_*(X_2),H_*(Y_2))$ is $(\F_d,0)$  or that it is $(0,\F_d)$ where here $d$ is an even integer. First note that by the symmetry properties of $\CFK^\infty(K)$}, $$H_*(\change{\{}i=0,j<-1\change{\}})\cong H_*(\change{\{}j=0,i<-1\change{\}})\cong  H_*(\change{\{}j=2,i\leq 0\change{\}})\change{[-4]}\cong H_*(X_2)\change{[-4]},$$ \change{where here $[-]$ indicates a shift in the Maslov grading}. Moreover, $H_*(Y_2)=H_*(\change{\{}i=0,j\leq 1\change{\}})$, \change{proving the second parts of~\ref{p1} and~\ref{p2} in the statement of the Lemma}.

$H_*(\change{\{}i=0,j>1\change{\}})$ can be computed using the is a short exact sequence:
\begin{center}
\begin{tikzcd}
    0\arrow[r]&C(\change{\{}i=0,j\leq 1\change{\}})\arrow[r]&C(\change{\{}i=0\change{\}})\arrow[r]&C(\change{\{}i=0,j>1\change{\}})\arrow[r]&0
\end{tikzcd}

\end{center}
which yields exact triangle on homology:
\begin{center}
\begin{tikzcd}
    H_*\change{(}\{i=0,j\leq 1\}\change{)}\arrow[rr]&&H_*(\change{\{}i=0\change{\}})\cong\F_0\arrow[dl]\\&H_*(\change{\{}i=0,j> 1\change{\}})\arrow[ul,]
\end{tikzcd}
\end{center}

It follows that $H_*(\change{\{}i=0,j>1\change{\}})$ is $\F_0$ or $\F_0\oplus\F_{d\change{+1}}$ according to whether or not $H_*\change{(}\{i=0,j\leq 1\}\change{)}$ is $\F_{\change{d}}$ or $0$. Note here that Ozsv\'ath-Szab\'o show that $\change{d}<0$~\cite[Proof of Theorem 1.2]{ozsvath2005knot}, so the top map is necessarily trivial, \change{proving the first parts of~\ref{p1} and~\ref{p2} in the statement of the Lemma}.

There is another short exact sequence:
\begin{center}
\begin{tikzcd}
    0\arrow[r]&C\change{(}\{i=0,j< -1\}\change{)}\arrow[r]&C\change{(}\{i=0,j\leq 1\}\change{)}\arrow[r]&C\change{(}\{i=0,|j|\leq 1\}\change{)}\arrow[r]&0
\end{tikzcd}

\end{center}
giving an exact triangle on homology:
\begin{center}
\begin{tikzcd}
    H_*\change{(}\{i=0,j< -1\}\change{)}\arrow[rr,]&&H_*\change{(}\{i=0,j\leq 1\}\change{)}\arrow[dl]\\&H_*(\change{\{}i=0,|j|\leq 1\change{\}})\arrow[ul]
\end{tikzcd}
\end{center}
$H_*(\change{\{}i=0,|j|\leq 1\change{\}})\cong \F_{\change{d-3}}$ or $\F_{\change{d}}$ in the case that that $(H_*(X_2),H_*(Y_2))$ is either \change{ $(0,\F_{\change{d}})$ or $(\F_{\change{d}},0)$}, \change{proving the final parts of~\ref{p1} and~\ref{p2} in the statement of the Lemma}.
\end{proof}

For grading reasons this determines the  filtered chain homotopy type of the chain complex $C(\change{\{}(x,y):|y-x|>1\change{\}})$, just as in the case for $L$-space knots.

We seek to determine the rest of the chain complex.

\begin{lemma}
    Suppose $K$ is an almost $L$-space knot and $\rank(H_*(\change{\{}(0,0)\change{\})})=1$. Then $\CFK^\infty(K)$ is an almost staircase complex.
\end{lemma}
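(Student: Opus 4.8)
The plan is to leverage the bigraded computation already in hand and reconstruct the differential on the associated graded, then bootstrap to the filtered chain homotopy type. By Lemma~\ref{maslovgradingcomp}, the assumption $\rank(H_*(\{(0,0)\}))=1$ places us in one of the cases 1ai), 1bi), 2ai), or 2bi), where $\widehat{\HFK}(K,0)\cong\F$ and $\widehat{\HFK}(K,1)$ is either $\F$ or $\F^2$. Combined with Lemma~\ref{grading11}, which pins down $H_*(\{i=0,|j|\leq1\})$ and the homology away from the diagonal band, I would first assemble the full bigraded picture of the associated graded $\CFK^\infty(K)$ as a vector space with its Maslov gradings. The outer part $C(\{|j-i|>1\})$ is already determined to be a staircase (as for $L$-space knots), so the task reduces to understanding the finitely many generators living in the band $|j-i|\leq 1$ near the center.

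Next I would determine the differential on the associated graded. Since we are working over $\F=\Z/2$ and the vertical and horizontal homologies (i.e. the $H_*$ of columns and rows) are each forced to be one-dimensional outside the central region and of controlled rank inside it, the components of $\partial$ of length one are strongly constrained: each generator can support or receive at most the arrows compatible with the rank data from Section~\ref{nomaslov} and the Maslov gradings from Section~\ref{maslov}. The Maslov grading bookkeeping is exactly what rules out spurious arrows, since $\partial$ drops Maslov grading by one. I expect that in each of the relevant cases the only differential consistent with all of this data, together with the symmetry of $\CFK^\infty(K)$ under $(i,j)\mapsto(j,i)$, is precisely the type~1 or type~2 almost staircase differential of Definitions~\ref{def:almoststaircase1} and~\ref{def:almoststaircase2}. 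I would treat the two parities of $m$ (hence the staircase's turning behavior at the center) as the two subcases producing the two types.

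The final and hardest step is upgrading from the associated graded to the genuine \emph{filtered} chain homotopy type --- this is the step the author credits to Hugo Zhou in the acknowledgments, so I anticipate it being the main obstacle. The issue is that a priori there could be longer differentials (length $\geq 2$, i.e. off-diagonal components changing both $i$ and $j$) that vanish on the associated graded but are present in the filtered complex. I would argue that any such additional components can be removed by a filtered change of basis: one sets up an induction on filtration length, using that the reduced complex already has the correct homology $\widehat{\HF}(S^3) \cong \F$ (or equivalently that $H_*(\CFK^\infty)\cong\F[U,U^{-1}]$), which severely limits what higher differentials can do, and then cancels or absorbs each unwanted arrow by a change of basis that is itself filtered. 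The grading constraints from Lemma~\ref{maslovgradingcomp} again do the heavy lifting, since a length-$\geq2$ arrow would have to connect generators whose Maslov gradings differ by one across a large Alexander-grading gap, and the explicit gradings computed there leave no room for this except in exactly the configurations already recorded in the almost staircase definitions. I would conclude that the filtered chain homotopy type is an almost staircase complex, completing the proof.
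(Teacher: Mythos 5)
Your opening steps track the paper's argument: you restrict to cases 1ai), 1bi), 2ai), 2bi) of Lemma~\ref{maslovgradingcomp}, combine with Lemma~\ref{grading11}, and reconstruct the row and column complexes $C(\{j=0\})$ and $C(\{i=0\})$ from the homology data, with the parity of the central Maslov grading separating the type 1 and type 2 cases. One omission there is repairable but worth noting: the homology data does \emph{not} determine the column and row complexes uniquely --- each is pinned down only up to two possible extra components of the differential (e.g.\ from $(0,0)$ to $(0,-A-1)$ and from $(0,-1)$ to $(0,-A')$), and the paper disposes of these four unwanted components by a filtered chain homotopy before doing anything else.

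The genuine gap is in your final step, and it runs in the wrong direction. You propose to remove all diagonal components (those changing both $i$ and $j$) by a filtered change of basis, treating them as spurious. But an almost staircase complex is \emph{not} a complex with only horizontal and vertical arrows: in Definition~\ref{def:almoststaircase1} one has $\partial y_{\pm 1}=x_{\pm 2}+x_1+x_{-1}$, and one of these three components is diagonal (similarly $\partial z=y_{-1}+y_1$ in Definition~\ref{def:almoststaircase2}). Those diagonals cannot be cancelled --- they are forced by $\partial^2=0$: dropping the diagonal component of $\partial y_1$ in the type 1 complex leaves $\partial^2 y_1=\partial(x_2+x_1)=z\neq 0$. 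This is exactly how the paper concludes: after the four removable vertical/horizontal components are killed, the requirement $\partial^2=0$ \emph{creates} two diagonal components, and the resulting complex is the almost staircase of Figure~\ref{almoststaircase1fig} or~\ref{almoststaircase2fig}. Your induction on filtration length, if it succeeded in cancelling every diagonal arrow, would produce an object that is not a chain complex; and your parenthetical concession that the gradings ``leave room'' for the diagonals of the almost staircase supplies no mechanism forcing those particular arrows to be present --- a removal argument can only bound the differential from above, never produce the components you need. The missing idea is to invoke $\partial^2=0$, not grading constraints or the homology of the total complex, as the source of the diagonal arrows.
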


\begin{proof}
We know that $H_*(\change{\{(}i=0\change)\})\cong\F_0$ and that $\rank(H_*(\change{\{(}0,\change{n})\change{\}))}\leq 1$ for $\change{n}\neq \pm1$, $1\leq \rank(H_*(\change{\{(}0,1)\change{\}))}\leq 2$. Moreover, in $\widehat{\HFK}(K)$ -- which is identified with $\change{\underset{n\in\Z}{\bigoplus}}H_*(\change{\{}(n,0)\change{\}})$ -- the \change{$(A,M)$}-gradings of the $5$ generators of smallest $A$-grading \change{(in absolute value)} are either $(0,d),(1,d+1),(-1,d-1),(A,d),(-A,d-2A)$ if $d$ is odd or $(0,d),(1,d+1),(-1,d-1),(-A,d),(A,d+2A)$ if $d$ is even. We have a number of cases:

\begin{enumerate}
    \item If $d$ is even then:
    
    \begin{enumerate}
        \item if $A>1$ then $H_*(\change{\{}i=0,j\leq 1)\cong 0$ and $H_*(\change{\{}i=0,j\geq A\change{\}})\cong \F_0$. This forces $C(\change{\{}i=0\change{\}})$ to be of the desired form -- i.e. as determine by the $\CFK^\infty$-type of an almost staircase complex of type $2$ -- perhaps with the addition of components of the differential from $(0,0)$ to $(0,-A-1)$ -- if there exists a generator of this grading -- and from $(0,-1)$ to $(0,-A')$, where $A'$ is the smallest integer $A'>A+1$ for which $C\change{(\{}(0,-A')\change{\})}$ is non-trivial. $C(\change{\{}j=0\change{\}})$ is determined similarly, up to the addition of two components of the differential. In $\CFK^\infty$ the resulting additional 4 components of the differential can be removed by a filtered chain homotopy. In order that $\partial^2=0$ there must be two diagonal components to the differential on $\CFK^\infty(K)$, as shown in Figure~\ref{almoststaircase1fig}.

        \item if $A=1$ then the fact that $H_*(\change{\{}i=0\change{\}})\cong \F_0$, $H_*(\change{\{}i=0,j\leq 1\change{\}})\cong \F_{d+2}, H_*(\change{\{}i=0,j\leq -1\change{\}})\cong \F_{d}, H_*(\change{\{}i=0,j< -1\change{\}})\cong 0$ determines $C(\change{\{}i=0\change{\}})$, perhaps with the addition of components of the differential from $(0,0)$ to $(0,-2)$ -- if there exists a generator of this grading -- and from the generator of Maslov grading $d-1$ in bigrading $(0,-1)$ to $(0,-A')$, where $A'$ is the smallest integer $A'>2$ for which $C(\change{\{(}0,-A\change{)\}})$ is non-trivial. $C(\change{\{}j=0\change{\}})$ is determined similarly, up to the addition of two components of the differential. In $\CFK^\infty$ the resulting additional 4 components of the differential can be removed by a filtered chain homotopy. In order that $\partial^2=0$ there must be two diagonal components to the differential on $\CFK^\infty(K)$, as shown in Figure~\ref{almoststaircase1fig}.
    \end{enumerate}

\item If $d$ is odd then:

\begin{enumerate}
    \item if $A>1$ then ${H_*(\change{\{}i=0,j\leq 1\change{\}})\cong \F_{d-1}}$ and ${H_*(\change{\{}i=0,j>1\change{\}})\cong \F_0\oplus\F_d}$. This once again determines $C(\change{\{}i=0\change{\}})$, perhaps up to the addition of two components of the differential one from a generator of bigrading $(0,A+1)$ -- if such a generator exists -- to the generator of grading $(0,0)$ and from $(0,A')$ to $(0,1)$ where $A'$ is the smallest $A'>A+1$ such that $C\change{(\{}(0,A')\change{\})}$ is non-trivial. $C(\change{\{}j=0\change{\}})$ is determined similarly, up to the addition of two components of the differential. In $\CFK^\infty$ the resulting additional 4 components of the differential can be removed by a filtered chain homotopy. In order that $\partial^2=0$ there must be two diagonal components to the differential on $\CFK^\infty(K)$, as shown in Figure~\ref{almoststaircase2fig}.
    
    \item if $A=1$ then the fact that $H_*(\change{\{}i=0\change{\}})\cong H_*(\change{\{}i=0,j>1\change{\}})\cong \F_0$, $H_*(\change{\{}i=0,j\leq 1\change{\}})\cong 0$, $H_*(\change{\{}i=0,j\leq 1\change{\}})\cong 0$ determines $C(\change{\{}i=0\change{\}})$ as being of the desired form perhaps with addition of two unwanted components. One of these components is from the generator of bigrading $(0,2)$ -- if it exists --  to the generator of grading $(0,0)$, the other is a component from from $(0,A')$ to the generator in bigrading $(0,1)$ of Maslov grading $d+1$ where $A'$ is the smallest $A'>2$ such that $C(\change{\{(}0,A'\change{)\}})$ is non-trivial. $C(\change{\{}j=0\change{\}})$ is similarly determined.  In $\CFK^\infty$ the resulting additional 4 components of the differential can be removed by a filtered chain homotopy. In order that $\partial^2=0$ there must be two diagonal components to the differential on $\CFK^\infty(K)$, as shown in Figure~\ref{almoststaircase2fig}.
\end{enumerate}

\end{enumerate}
\end{proof}

To deal with the case that $\rank(H_*(\change{\{(}0,0\change{)\}}))=3$ we first understand the behavior of the complex near the diagonal:

\begin{lemma}
    Suppose $K$ is an almost $L$-space knot and $\rank(H_*(\change{\{(}0,0\change{)\}}))=3$. Then up to a filtered change of basis $C_*(\{|j-i|\leq 1\})$ is the direct sum of a box complex and a staircase complex.
\end{lemma}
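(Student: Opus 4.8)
The plan is to mimic Ozsv\'ath--Szab\'o's reduction of the near-diagonal complex in the $L$-space case, with the single additional feature that the excess homology is absorbed into one box. First I would record exactly which generators of $C_*(\{|j-i|\leq 1\})$ occur, together with their $(A,M)$-bigradings. By hypothesis $\rank(H_*(\{(0,0)\}))=3$, so we are in one of the cases 1aii), 1bii), 2aii), 2bii) of Lemma~\ref{maslovgradingcomp}; in each of these $\widehat{\HFK}(K,0)\cong\F^3$ and $\widehat{\HFK}(K,\pm1)$ has rank $1$ or $2$, while Lemma~\ref{grading11} pins down everything with $|j-i|>1$. Using the $(i,j)\leftrightarrow(j,i)$ symmetry of $\CFK^\infty(K)$ I would reduce to a symmetric list of generators lying on the three diagonals $j-i\in\{-1,0,1\}$, with three generators of Alexander grading $0$ and (one or) two generators of Alexander grading $\pm1$.

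The key organizing principle is that the homology of each column $C(\{i=c\})$ and each row $C(\{j=c\})$ meeting the band is forced by earlier computations: the full column $C(\{i=0\})$ has homology $\F_0$, and the slice homologies $H_*(\{i=0,|j|\leq1\})$, $H_*(\{i=0,j>1\})$, $H_*(\{i=0,j<-1\})$ were determined in Lemma~\ref{grading11}. I would use the exact triangles of Figures~\ref{exacttriangles} and~\ref{exacttriangle}, together with $\partial^2=0$, to conclude that there is a distinguished generator at $A=0$ (the central generator) and at the extreme Alexander gradings carrying the rank-$1$ vertical and horizontal homology; the arrows among these reproduce a staircase exactly as in~\cite{ozsvath2005knot}. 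The remaining four generators --- two of Alexander grading $0$, one of grading $+1$ and one of grading $-1$ --- contribute nothing to the homology of any row or column, so each must be paired off both horizontally and vertically. In the reduced model all arrows have length one, and the Maslov gradings recorded above leave the $2\times2$ square of Figure~\ref{boxfig} as the only configuration realizing this simultaneous horizontal and vertical cancellation; this is the box.

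Finally I would split the box off as a genuine direct summand. Having fixed the staircase arrows and the box arrows, any remaining components of the differential connect a box generator to a staircase generator; I would remove these by a sequence of filtered changes of basis (Gaussian elimination), checking in each case, via the explicit bigradings from Lemma~\ref{maslovgradingcomp}, that the move is filtered and that $\partial^2=0$ is preserved. The symmetry of $\CFK^\infty(K)$ and the rank-$1$ constraints on the rows and columns guarantee that no such cross-term can survive.

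I expect the last step to be the main obstacle: a priori the homology and grading data are consistent with longer arrows, or with arrows tying the four excess generators to the staircase in a way that does not visibly split, and ruling these out --- equivalently, producing the filtered homotopies that erase them --- is exactly the delicate bookkeeping that the staircase-only case avoids. The two symmetry reductions and the repeated use of $\partial^2=0$ are what make this tractable, but the verification that the splitting change of basis respects both filtrations is where the real content lies.
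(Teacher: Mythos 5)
Your outline does follow the same broad strategy as the paper---pin down generators and bigradings via Lemma~\ref{maslovgradingcomp}, force the staircase from the row/column homology constraints, and split off a box by filtered changes of basis---but it has a genuine gap at its central claim. You assert that ``the Maslov gradings recorded above leave the $2\times2$ square of Figure~\ref{boxfig} as the only configuration realizing this simultaneous horizontal and vertical cancellation.'' That is only true generically. Writing $H_*(\{(0,1)\})\cong\F_m\oplus\F_d$ in the rank-two case, the grading bookkeeping you describe determines the differential on the band only when $m-1\notin\{d,d-1,d-2\}$; when the gradings of the would-be box generators and the would-be staircase generators coincide or interleave, several configurations of the vertical differential are consistent with all the homology and grading data. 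The paper must treat $m=d+1$, $m=d$ and $m=d-1$ as separate cases: for $m=d\pm1$ certain a priori admissible vertical differentials are excluded only because they fail to extend to a differential on all of $C(\{|j-i|\leq 1\})$ (an argument using $\partial^2=0$ across the whole band, not within a single column), and for $m=d$ the complex is \emph{thin}, where no grading argument can single out the box-plus-staircase configuration at all; there the paper invokes Petkova's structure theorem that thin complexes split as direct sums of boxes and staircases~\cite[Lemma 7]{petkova2013cables}. Without handling these coincidences your argument fails precisely in cases that actually occur: $T(2,3)\# T(2,3)$, for instance, is a genus-two almost $L$-space knot whose complex is thin.

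Second, you defer rather than execute the splitting step: you correctly identify that removing components of $\partial$ joining the box to the staircase ``is where the real content lies,'' but you give no mechanism for doing it. In the paper this step is concrete and short once the band structure is fixed: one enumerates the possible extra (diagonal) components, observes that they can occur only between explicitly identified generators and that they come in pairs, and then writes down a filtered change of basis that removes them. Producing that enumeration requires the explicit generator-by-generator description of the band complex, which is exactly the output of the case analysis your proposal elides; acknowledging the difficulty is not the same as resolving it.
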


\begin{proof}

We proceed by cases according to $\rank(H_*\change{(\{}(0,1)\change{\})})$.

Suppose $H_*\change{(\{}(0,1)\change{\})}\cong\F_a, H_*\change{(\{}(0,0)\change{\})}\cong\F_{a-1}^2\oplus\F_d, H_*\change{(\{}(0,-1)\change{\})}\cong\F_{a-2}$. Note that these Maslov gradings are determined by Lemma~\ref{maslovgradingcomp}.  Let $x,y_1,y_2,z,w$ be the respective generators.  Note that there is a unique \change{form} $\partial$ can take on $C(\change{\{}i=0\change{,}|j|\leq 1\change{\}})$ such that $H_*(\change{\{}i=0,|j|\leq 1\change{\}})\cong\F_d$, up to a basis change in $C\change{(\{}(0,0)\change{\})}$. It follows from here that $(C(\change{\{(}i,j\change{)}:|j-i|\leq 1\change{\}}),\partial)$ is determined up to the addition of diagonal components of $\partial$. The only way to add diagonal components is to have diagonal components from $w$ to $U^{n+1}x,U^{n}w$ for some $n$ or vice versa. A filtered change of basis removes these components.

Suppose now that $H_*\change{(\{}(0,1)\change{\})}\cong\F_m\oplus \F_d, H_*\change{(\{}(0,0)\change{\})}\cong\F_{m-1}^2\oplus\F_{d-1}$. Then we have that $ H_*\change{(\{}(0,-1)\change{\})}\cong\F_{m-2}\oplus\F_{d-2}$. Note that these Maslov gradings are determined by Lemma~\ref{maslovgradingcomp}. Let $x_1$ be the generator of $H_*\change{(\{}(0,1)\change{\})}$ of Maslov grading $m$, $y_1$ be the generator of $H_*\change{(\{}(0,1)\change{\})}$ of Maslov grading $d$, $x_2,x_3$ be the generators of $H_*\change{(\{}(0,0)\change{\})}$ of Maslov grading $m-1$, $y_2$ be the generator of $H_*\change{(\{}(0,0)\change{\})}$ of Maslov grading $d-1$, $x_4$ be the generator of $H_*\change{(\{}(0,-1)\change{\})}$ of Maslov grading $m-2$ and $y_3$ be the generator of $H_*\change{(\{}(0,-1)\change{\})}$ of Maslov grading $d-2$.

Suppose $m-1\not\in\{d,d-1,d-2\}$. Consider the restriction of the differential to $C(\change{\{}i=0,|j|\leq 1\change{\}})$.  Then after a basis change we may take $\partial x_1=x_2$. Since $m-1\neq d,d-1$, we have $\partial x_3=x_4$. The remaining component of the differential is then determined. Specifically, after mirroring we may assume that $y_1$ is the generator that persists and the remaining component of the differential is $\partial y_2=y_3$. 

We then have that in $C(\change{\{}|j-i|\leq 0\change{\}})$ the $y$ generators form a single staircase and the $x$ generators form boxes. The question remains over whether or not there are diagonal components. By inspection these come in pairs and can be removed by a filtered change in basis.

We now deal individually with the cases $m=d+1$, $m=d$ and $m=d-1$:

\begin{enumerate}
    \item $m=d+1$ \begin{enumerate}\item Suppose the Maslov grading $d-2$ generator is not the one which persists to $H_*(\change{\{}i=0,|j|\leq 1\change{\}})$. Then there is a component of $\partial$ from the Maslov index $d-1$ generator to the Maslov index $d-2$ generator. We have the following subcases:\begin{enumerate}
        \item Suppose the generator of $H_*\change{(\{}(0,1)\change{\})}$ of Maslov index $d$ generator does not persist to $H_*(\change{\{}i=0,|j|\leq 1\change{\}})$. Then it has a component to the generator of $H_*\change{(\{}(0,-1)\change{\})}$ of  Maslov index $d-1$. The Maslov index $d+1$ generator must have a component to one of the generators of $H_*\change{(\{}(0,0)\change{\})}$ of Maslov index $d$. Such a $\partial$ clearly cannot be extended to $C(\change{\{}|j-i|\leq 1\change{\}})$ a contradiction.

        \item Suppose the generator of $H_*\change{(\{}(0,1)\change{\})}$ of Maslov index $d$ persists. After a change of basis this determines the vertical components of the differential and it is readily observed that $C(\change{\{}|j-i|\leq 1\change{\}})$ is the direct sum of a staircase and a box complex.
    \end{enumerate}
    \item Suppose the Maslov index $d-2$ generator does persist to ${H_*(\change{\{}i=0,|j|\leq 1\change{\}})}$. This is impossible because it is of the wrong Maslov grading.
\end{enumerate}
    \item $m=d$. In this case the chain complex is thin and so splits as a direct sum of boxes and staircases by work of Petkova~\cite[Lemma 7]{petkova2013cables}.

    \item $m=d-1$\begin{enumerate}
        \item Suppose the Maslov index $d-3$ generator is not the one which persists to $H_*(\change{\{}i=0,|j|\leq 1\change{\}})$. After a change of basis we may assume the differential has a component from a single Maslov index $d-2$ generator to it. We have the following cases:\begin{enumerate}
            \item Suppose the other generator of $H_*\change{(\{}(0,0)\change{\})}$ of Maslov grading $d-2$ does not persist to $H_*(\change{\{}i=0,|j|\leq1\change{\}})$. Then there is a component 
of the differential from the the generator of $H_*\change{(\{}(0,1)\change{\})}$ of Maslov index $d-1$ to it and indeed the whole vertical complex is determined. Indeed, the whole complex is seen to be a staircase complex plus a box complex. There can be no diagonal components to the differential for for grading reasons.

            \item Suppose the generator of $H_*\change{(\{}(0,0)\change{\})}$ of Maslov index $d-2$ persists to $H_*(\change{\{}i=0,|j|\leq 1\change{\}})$. Then the vertical components of $\partial$ are determined by the Maslov gradings. It is readily observed that this does not extend to a differential on $C(\change{\{}|j-i|\leq 1\change{\}})$.
        \end{enumerate}
    \end{enumerate} 

\end{enumerate}\end{proof}

This determines the complex up to addition of additional arrows between $C(\change{\{(}i,j\change{)}:|j-i|\leq 1\change{\}}), C(\change{\{(}i,j\change{)}:j-i>1\change{\}}), C(\change{\{(}i,j\change{)}:i-j>1\change{\}})$. In order that $H_*(\change{\{}i=0)\cong H_*(\change{\{}j=0\change{\}})\cong\F_0$ it is readily seen that the staircases from $C(\change{\{}|j-i|>1\change{\}})$ and $C(\change{\{}|j-i|<1\change{\}})$ connect to form a large staircase. We now show that up to filtered chain homotopy there are no diagonal components:

\begin{proposition}\label{stair}
Suppose $K$ is an almost $L$-space knot and $\rank(H_*\change{(\{}(0,0)\change{\})})=3$. Then $\CFK^\infty(K)$ is the direct sum of a box complex and a staircase complex.
\end{proposition}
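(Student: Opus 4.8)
The plan is to build on the two preceding lemmas, which have already pinned down the filtered chain homotopy type of $C(\{|j-i|>1\})$ (via Lemma~\ref{grading11}) and of $C(\{|j-i|\leq 1\})$ (via the immediately preceding lemma), showing the latter is a direct sum of a box complex and a staircase complex. What remains is to glue these pieces together and rule out any ``diagonal'' components of the differential connecting the near-diagonal region $C(\{|j-i|\leq 1\})$ to the two off-diagonal regions $C(\{j-i>1\})$ and $C(\{i-j>1\})$. The key structural input is the symmetry $H_*(\{i=0\})\cong H_*(\{j=0\})\cong \F_0$, which forces the vertical (respectively horizontal) homology to be a single $\F$; as the excerpt already notes, this is exactly what compels the staircase from $C(\{|j-i|>1\})$ to link up with the staircase summand of $C(\{|j-i|\leq 1\})$ into one long staircase.

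First I would fix a homogeneous basis adapted to the decomposition already established, writing the generators as those of a single box $\{x_1,x_2,x_3,x_4\}$ together with the staircase generators $\{x_k,y_k\}$ assembled from both regions. Then I would enumerate the possible additional components of $\partial$: these are precisely the length-preserving-or-shortening maps that drop the quantity $|j-i|$ across the boundary between regions, i.e. diagonal arrows. The core of the argument is that such extra arrows can be cancelled by a filtered change of basis. Concretely, for each candidate diagonal component I would exhibit a change of basis of the form (generator) $\mapsto$ (generator) $+$ (correction) that removes it while preserving the bigrading filtration, using the fact that we work over $\F=\Z/2$ so there are no sign obstructions. The standard cancellation bookkeeping is that diagonal components must appear in compatible pairs (forced by $\partial^2=0$ together with the Maslov-grading constraints from Lemma~\ref{maslovgradingcomp}), and cancelling one either removes or is accompanied by its partner.

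The main obstacle I anticipate is verifying that no diagonal component survives the change of basis for grading reasons, and in particular confirming that the box summand does not interact with the staircase. Here I would lean on the Maslov gradings computed in Lemma~\ref{maslovgradingcomp}: the box generators sit in a rigid pattern of adjacent Maslov gradings (differing by the box's internal structure), whereas the staircase generators occupy the gradings forced by $H_*(\{i=0\})\cong\F_0$. A diagonal arrow from a box generator to a staircase generator, or vice versa, would have to respect both the Alexander filtration and the requirement that the differential drop Maslov grading by exactly one; I expect that a case-by-case comparison of the available gradings (paralleling the structure in the preceding lemma's cases $m=d+1$, $m=d$, $m=d-1$) shows every such hypothetical arrow is either grading-forbidden outright or removable by an explicit homotopy. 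Once all diagonal components are eliminated, the complex splits as claimed, with the long staircase and the box as orthogonal summands, completing the proof.

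I would conclude by remarking that this matches the shape asserted in part~\ref{staircasebox} of Theorem~\ref{infinityclassification}, so that together with the $\rank(H_*(\{(0,0)\}))=1$ case handled in the previous lemma this exhausts all possibilities and establishes the classification.
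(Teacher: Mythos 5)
Your overall strategy coincides with the paper's: after the two preceding lemmas, the remaining content is precisely to show that every diagonal component of $\partial$ connecting the staircase to the box can be removed by a filtered change of basis. The problem is that your proposal defers exactly this step to an expectation (``I expect that a case-by-case comparison of the available gradings shows every such hypothetical arrow is either grading-forbidden outright or removable''), and the mechanism you suggest is not the one that works. Diagonal arrows in $\CFK^\infty$ are not generically forbidden by Maslov-grading considerations --- any component of the differential drops the Maslov grading by one, diagonal or not, and Lemma~\ref{maslovgradingcomp} does not exclude them. What actually eliminates them is a concrete cancellation argument: the paper assumes, towards a contradiction, that a component from the staircase $C$ to the box $B$ exists, picks the \emph{extremal} generator $z$ (lowest $j$-grading, and highest $i$-grading among those), looks at the extremal component of $\partial z$ lying in $B$, and runs a four-way case analysis according to whether that component is $x_1$, $x_2$, $x_3$ or $x_4$. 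In each case $\partial^2=0$ forces companion components (for instance, if $x_4\in\partial z$ then either $x_1\in\partial z$, or there is a staircase generator $z'\in\partial z$ with $x_3\in\partial z'$), and then an explicit filtered change of basis such as $z\mapsto z+x_2$ or $z'\mapsto z'+x_4$ removes all offending components at once. The extremal choice of $z$ is what gives control over the process; your ``compatible pairs'' bookkeeping, as stated, says nothing about whether cancelling one arrow creates new ones or why the procedure terminates, and that is where the entire difficulty of the proposition lives.

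A second, smaller omission: the paper only carries out this analysis for arrows from the staircase into the box, and obtains the opposite direction for free by dualizing the complex, having first enlarged the class of staircase complexes to include their duals. Your plan treats all diagonal arrows uniformly, which is not wrong in principle, but without the dualization trick (or a separate argument for the other direction) the case analysis you would need is roughly twice as long --- and in your proposal neither half is actually carried out.
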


Before proving this result we find it convenient to broaden our definition of staircase complexes to include their duals. We proceed with this in mind.

\begin{proof}

We have shown that $\CFK^\infty(K)$ contains a box complex and a staircase complex. Call them $B$ and $C$ respectively. We need to show that there is no component of $\partial$ from $C$ to $B$ after a filtered change of basis. The fact that there is no component from $C$ to $B$ then follows by dualizing the complex and the fact we have broaden our definition of staircase complexes.

Suppose towards a contradiction that there is some component of $\partial$ from $C$ to $B$. Consider the generators of lowest $j$-grading. Amongst these consider that of the highest $i$ grading. There is a unique such generator, call it $z$.

 Let $x_1, x_2,x_3,x_4$ be the generators of $C$ ordered as in Figure~\ref{boxfig}. Consider the components of $\partial z$ in $C$ which has the lowest $j$ grading. Amongst these consider those with the lowest $i$ grading.

Suppose this generator is $x_4$. In order that $\partial^2=0$, we must have that $\partial z$ has a component $z'$ of the same $j$ grading such that $\partial z'$ has a component $x_3$, or there is a component of the differential to $x_1$. In the first case either a filtered change of basis $z'\mapsto z'+x_4$ or $z\mapsto z+x_2$ removes both unwanted components of the differential. In the second case the change of basis $z\mapsto z+x_2$ removes the unwanted components of the differential.

We proceed now to the case that the generator is $x_3$. Performing the filtered change of basis $z\mapsto z+x_1$ or $z\mapsto z+x_4$ removes the unwanted components of the differential.

Suppose this generator is $x_2$. In order that $\partial^2=0$ we have that there is a generator $z'$ with the same $j$ grading as $z$ such that $\langle \partial z,z'\rangle\neq 0$ and that there is a component of the differential from $z'$ to $x_4$. In fact, again in order that $\partial^2=0$, we must also have a component of $\partial$ from $z'$ to $x_1$. Performing a filtered change of basis $z'\mapsto z'+x_2$ removes the unwanted components of the differential.

Suppose this generator is $x_1$. We must have a generator $z'$ of the same $j$ grading as $z$ such that there is a component of the differential from $z$ to $z'$ and there is a component of $\partial$ from $z'$ to $x_3$. The filtered change of basis $z'\mapsto z'+x_1$ then removes the unwanted components of the differential.\end{proof}

\section{Applications}\label{applications}

In this section we prove the applications advertised in the introduction.

\begin{corollary}[\cite{baldwin2022characterizing}]\label{genus1class}
Suppose $K$ is a genus one almost $L$-space knot. Then $K$ is $T(2,-3)$, the figure eight knot, or the mirror of the knot $5_2$.
\end{corollary}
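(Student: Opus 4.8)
The plan is to leverage Theorem~\ref{infinityclassification} together with the genus computation of knot Floer homology. Since $K$ is a genus one almost $L$-space knot, its Alexander grading is supported in $\{-1,0,1\}$, so the classification of $\CFK^\infty(K)$ becomes extremely constrained. First I would observe that for a genus one knot the top Alexander grading is $1$, and by the results of the previous sections (Lemma~\ref{LemmaX1} and the final analysis of Section~\ref{nomaslov}) we know $\rank(\widehat{\HFK}(K,1))\in\{1,2\}$ and $\rank(\widehat{\HFK}(K,0))\in\{1,3\}$. The strategy is to run through the two cases of Theorem~\ref{infinityclassification} and see which staircase-plus-box or almost-staircase complexes can actually occur when the genus is one, i.e.\ when everything is squeezed into three Alexander gradings.

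Next I would enumerate the possibilities. For a genus one knot the only staircase complex available is the trivial one (a single generator) or the length-one staircase matching $T(2,3)$, whose mirror $T(2,-3)$ is the dual staircase; adjoining a box complex (which lives in Alexander gradings $0$ and $\pm 1$, but here must fit in $|A|\le 1$) is possible only when the box does not push the Alexander grading beyond $1$. This should single out $T(2,-3)$ (trefoil staircase, no box) and $T(2,3)\#T(2,3)$-type configurations — but the latter has genus two, so it is excluded — leaving the staircase-plus-box case to yield precisely the mirror of $5_2$ and the figure eight. For the almost staircase case, the $N=0$ degenerate case of Definition~\ref{def:almoststaircase1} is explicitly the left-handed trefoil $T(2,-3)$, and I would check that no higher $N$ almost staircase fits inside $|A|\le 1$. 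So the combinatorial upshot is that exactly three $\CFK^\infty$ types survive.

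The final step is to upgrade from $\CFK^\infty$ types to actual knot identifications. Here I would invoke detection results: once one knows $\widehat{\HFK}(K)$ (which is determined by the surviving $\CFK^\infty$ types), the genus one knots realizing these invariants are pinned down by the cited work of Ghiggini~\cite{ghiggini2008knot} (knot Floer homology detects $T(2,3)$, hence its mirror $T(2,-3)$) and Baldwin--Sivek~\cite{baldwin2022floer} (detection of the figure eight and the mirror of $5_2$), which are precisely the references flagged in the Section~\ref{maslov} genus assumption. The figure eight is its own mirror and has the thin box-type complex; the mirror of $5_2$ realizes the other box-plus-staircase configuration; and $T(2,-3)$ realizes the almost staircase.

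The main obstacle I expect is the bookkeeping in the enumeration step — carefully verifying that no almost staircase complex of type $1$ or type $2$ with $N\ge 1$, and no staircase-plus-box with the box or staircase too large, can be supported in Alexander gradings $\{-1,0,1\}$, while correctly handling the degenerate overlaps (e.g.\ that the $N=0$ type $1$ almost staircase coincides with $T(2,-3)$, and that box-plus-staircase complexes with $\rank(\widehat{\HFK}(K,1))=2$ can masquerade as almost staircases per the remark following Definition~\ref{def:box}). The genus constraint does most of the work, but ruling out spurious complexes cleanly and then matching each surviving invariant to a unique knot via the external detection theorems is where the care is needed.
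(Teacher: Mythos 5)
There is a genuine circularity in your middle step. You propose to ``run through the two cases of Theorem~\ref{infinityclassification}'' for a genus one knot, but within the paper's logical structure this theorem cannot be invoked here: Sections~\ref{maslov} and~\ref{chain}, which establish the Maslov gradings and the filtered chain homotopy type, explicitly assume the almost $L$-space knot has genus strictly greater than one, and they justify that assumption by citing Corollary~\ref{genus1class} itself. In other words, the genus one case of Theorem~\ref{infinityclassification} is \emph{established by} this corollary (once the three knots are identified, their complexes are known by direct computation), not the other way around. Any proof of the corollary must therefore avoid the full classification theorem, and your enumeration of staircase-plus-box versus almost staircase complexes in Alexander gradings $\{-1,0,1\}$ is exactly the part you are not entitled to.

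The good news is that your proposal already contains the correct, non-circular argument, and the enumeration step is simply unnecessary. The only input needed from the paper's machinery is the Section~\ref{nomaslov} conclusion --- which does not assume genus $>1$ --- that $\rank(\widehat{\HFK}(K,1))\leq 2$. From there the paper argues directly: if the rank in the top grading is $1$, Ghiggini's classification of genus one knots with this property (they are fibered, hence $T(2,\pm 3)$ or the figure eight) applies; if the rank is $2$, Baldwin--Sivek's classification of nearly fibered knots applies; and one then checks which knots on these finite lists are almost $L$-space knots. Note also a misattribution in your final step: the figure eight knot is fibered and falls under Ghiggini's rank-one case, not under Baldwin--Sivek's nearly fibered classification, which is what yields the mirror of $5_2$.
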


\begin{proof}
\change{Suppose that} $K$ is an almost $L$-space knot \change{of genus one. Then $\widehat{\HFK}(K)$ is supported in Alexander gradings $A$ with $|A|\leq 1$. As noted at the conclusion of Section~\ref{nomaslov}, we have that} the rank of $\widehat{\HFK}(K)$ in Alexander grading $1$ is at most two. The result then follows immediately from Baldwin-Sivek's classification of nearly fibered knots~\cite{baldwin2022floer} -- i.e. genus one knots with knot Floer homology of maximal Alexander grading of rank two -- and Ghiggini's classification of genus $1$ knots with knot Floer homology of rank one in their maximal Alexander grading~\cite{ghiggini2008knot}.
\end{proof}

\begin{corollary}\label{comp}

The only composite almost $L$-space knot is $T(2,3)\# T(2,3)$.
\end{corollary}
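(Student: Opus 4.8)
The plan is to classify composite almost $L$-space knots by leveraging the behavior of $\CFK^\infty$ under connected sum, together with the classification provided by Theorem~\ref{infinityclassification}. First I would recall that $\CFK^\infty(K_1 \# K_2) \simeq \CFK^\infty(K_1) \otimes_{\F[U,U^{-1}]} \CFK^\infty(K_2)$, so that if $K = K_1 \# K_2$ is a nontrivial connected sum of nontrivial knots, then $\CFK^\infty(K)$ must simultaneously be a tensor product of two nontrivial complexes and must have the filtered chain homotopy type of one of the model complexes from Theorem~\ref{infinityclassification} --- either a staircase plus box, or an almost staircase of type~1 or~2.

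The key observation I would exploit is a rank count. The total rank of $\widehat{\HFK}(K)$ (equivalently, the number of generators in a reduced model of $\CFK^\infty(K)$) is multiplicative under connected sum. Examining the model complexes, a staircase-plus-box complex has total rank $(2N+1) + 4$ for the staircase-and-box, while almost staircases have an odd total rank of the form $4N+3$. By contrast, if $K_1$ has $\widehat{\HFK}$-rank $r_1 \geq 3$ and $K_2$ has rank $r_2 \geq 3$ (both nontrivial knots have rank at least $3$, with equality only for trefoils), then $K$ has rank $r_1 r_2 \geq 9$, and this product structure is highly constrained. The plan is to argue that the only way a tensor product of two nontrivial complexes can land in the list of Theorem~\ref{infinityclassification} is when both factors are trefoils: a nontrivial $L$-space knot $K_2$ has a staircase $\CFK^\infty$, and I would show that $\CFK^\infty(K_1) \otimes (\text{staircase})$ can only have next-to-minimal surgery rank if $K_1$ is itself an $L$-space knot, reducing the problem to connected sums of two $L$-space knots. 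Since $K$ must be an almost $L$-space knot rather than an $L$-space knot, and since the tensor product of two nontrivial staircases is an $L$-space complex only in degenerate situations, I would then invoke the genus-one classification (Corollary~\ref{genus1class}) and a direct analysis to pin down $T(2,3)\#T(2,3)$.

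Concretely, the cleanest route is via the immersed curve invariant and Proposition~\ref{stability}: if $K = K_1 \# K_2$, then the immersed curve $\Gamma(K)$ is obtained from $\Gamma(K_1)$ and $\Gamma(K_2)$, and the almost-$L$-space condition forces strong constraints on the slope $m$ and vertical-segment count $n$ of each factor. Alternatively, and more in the spirit of the preceding sections, I would compare the tensor product's local structure near the diagonal $|j-i|\leq 1$ against the allowed forms. The trefoil $T(2,3)$ has $\CFK^\infty$ equal to the length-one staircase with three generators; its self-tensor-product $\CFK^\infty(T(2,3)\#T(2,3))$ works out to a staircase plus a box, which is exactly case~\ref{staircasebox} of Theorem~\ref{infinityclassification} with total rank $9 = 3 \times 3$, and one checks it is a genuine almost $L$-space complex. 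I would verify this is the unique composite realization by showing any other product of nontrivial $L$-space complexes either fails to be an almost $L$-space complex (too many box summands, or the reduced homology in Alexander grading zero has the wrong rank or Maslov support, contradicting Proposition~\ref{prop:delta0}) or has the wrong genus.

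The main obstacle I anticipate is ruling out tensor products where one factor has higher genus or higher rank: one must show that $\CFK^\infty(K_1) \otimes \CFK^\infty(K_2)$ produces either more than one box summand or an almost-staircase-incompatible structure whenever the factors are more complicated than trefoils. The cleanest control comes from the fact, noted after Proposition~\ref{prop:delta0}, that the Alexander polynomial of an almost $L$-space knot determines $\CFK^\infty(K)$; since $\Delta_K = \Delta_{K_1}\Delta_{K_2}$ is a product of $L$-space knot Alexander polynomials (each of which has all coefficients in $\{0,\pm 1\}$ with alternating signs), I would argue that this product has the alternating unit-coefficient form required of an almost $L$-space knot only when both $\Delta_{K_i} = t - 1 + t^{-1}$, forcing $K_1 = K_2 = T(2,3)$.
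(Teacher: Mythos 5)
Your starting point (the K\"unneth formula plus Theorem~\ref{infinityclassification}) and your verification that $\CFK^\infty(T(2,3)\# T(2,3))$ is a staircase plus a box are both fine, but the uniqueness half of your argument has a genuine gap: the pivotal reduction ``to connected sums of two $L$-space knots'' is never justified, and as written it is circular. You assume the factor $K_2$ has a staircase complex (i.e.\ is an $L$-space knot) in order to argue that $K_1$ must then be one too; nothing in the hypotheses tells you that \emph{any} factor of a composite almost $L$-space knot is an $L$-space knot --- that is essentially the content to be proven. The closing Alexander-polynomial argument inherits the same gap, since it presupposes that both $\Delta_{K_i}$ are $L$-space knot polynomials. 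Several auxiliary steps would also fail as stated: the rank count is not decisive (a staircase-plus-box complex with a length-two staircase has total rank $9 = 3\times 3$, so rank cannot separate the trefoil-square from other hypothetical products, and rank $3$ does not force a trefoil --- the figure eight knot also has rank $3$); Corollary~\ref{genus1class} classifies genus-one \emph{almost $L$-space knots}, whereas the factors $K_1,K_2$ need not themselves be almost $L$-space knots, so it cannot be invoked for them; and the paper gives no formula for the immersed multicurve of a connected sum, so the route through $\Gamma(K_1\# K_2)$ and Proposition~\ref{stability} is not available as described.

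The missing mechanism, which is how the paper actually argues, is fiberedness. By Theorem~\ref{infinityclassification}, an almost $L$-space knot $K$ with $g(K)>2$ has $\rank\widehat{\HFK}(K,g(K))=1$ and $\rank\widehat{\HFK}(K,g(K)-1)\leq 1$; Ghiggini--Ni then show $K$ is fibered, so both factors of a connected sum are fibered, and Baldwin--Vela-Vick's nonvanishing of $\widehat{\HFK}$ in the next-to-top Alexander grading of a fibered knot forces, via K\"unneth, $\rank\widehat{\HFK}(K_1\# K_2, g-1)\geq 2$, a contradiction. Since genus $\leq 1$ knots are prime, any composite example has genus exactly $2$, whence both factors are genus one with rank-one top group; the Maslov grading $0$ of the top of the staircase makes each factor fibered (Ghiggini) and strongly quasi-positive (Hedden), and $T(2,3)$ is the only genus-one fibered strongly quasi-positive knot. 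Your outline never isolates this fiberedness/strong-quasi-positivity mechanism, and without it the case analysis you sketch cannot be closed.
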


\begin{proof}

Let $K$ be an almost $L$-space knot. Knots of genus at most $1$ are prime. If $g(K)>2$ then \change{$\change{\rank(\widehat{\HFK}(K,g(K)))}=1$ and $\rank(\widehat{\HFK}(K,g(K)-1)\leq 1$ by Theorem~\ref{infinityclassification}. Thus $K$ is fibered by work of Ghiggini~\cite{ghiggini2008knot} and Ni~\cite{ni2007knot}. It follows that $\rank(\widehat{\HFK}(K,g(K)-1))=1$}  by a result of Baldwin-Vela-Vick~\cite[Theorem 1.1]{baldwin_note_2018}. It follows that $K$ cannot be \change{composite} just as it does in the $L$-space case~\cite[\change{Corollary} 1.4]{baldwin_note_2018}.

Suppose $g(K)=2$ and $K=K_1\# K_2$. Then since $\widehat{\HFK}(K)\cong\widehat{\HFK}(K_1)\otimes\widehat{\HFK}(K_2)$ and $\widehat{\HFK}(K)$ is trivial in Alexander gradings $\change{\geq}2$ and of Maslov grading $0$ in Alexander grading $2$, it follows that $K_1, K_2$ are both genus $1$~\cite{ozsvath2005knot}, fibered~\cite{ghiggini2008knot} and strongly quasi-positive~\cite{hedden2010notions}. It follows that $K_1=K_2=T(2,3)$. It is readily checked that $T(2,3)\# T(2,3)$ is indeed an almost $L$-space knot.
\end{proof}

\begin{corollary}[\cite{baldwin2022characterizing}]~\label{fibered}
The mirror of $5_2$ is the only almost $L$-space knot which is not fibered.
\end{corollary}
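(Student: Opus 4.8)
The plan is to detect fiberedness through the rank of knot Floer homology in the top Alexander grading, using Ni's theorem that a knot $K$ is fibered if and only if $\rank(\widehat{\HFK}(K,g(K)))=1$~\cite{ni2007knot} (with Ghiggini's earlier low-genus version~\cite{ghiggini2008knot}), together with Ozsv\'ath-Szab\'o's genus detection theorem, which guarantees $\widehat{\HFK}(K,g(K))\neq 0$. So the whole question reduces to computing this top rank across the classification of Theorem~\ref{infinityclassification}.

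First I would dispatch the case $g(K)>1$. Here Lemma~\ref{LemmaX1} applies to the top Alexander grading and yields $\rank(\widehat{\HFK}(K,g(K)))\leq 1$; combined with genus detection this forces $\rank(\widehat{\HFK}(K,g(K)))=1$, so every almost $L$-space knot of genus strictly greater than one is fibered. Equivalently, one can read this directly off the two model complexes of Theorem~\ref{infinityclassification}: each of the staircase-plus-box and almost staircase complexes has a single generator in its maximal Alexander grading once $g(K)>1$.

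It then remains to treat the genus-one case, which is precisely the content of Corollary~\ref{genus1class}: the genus-one almost $L$-space knots are $T(2,-3)$, the figure eight knot, and the mirror of $5_2$. The left-handed trefoil $T(2,-3)$ and the figure eight knot are both fibered, whereas the mirror of $5_2$ is not, since its Alexander polynomial $2t-3+2t^{-1}$ is not monic and fibered knots have monic Alexander polynomial. Assembling the two cases shows that the mirror of $5_2$ is the unique non-fibered almost $L$-space knot.

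The one genuine subtlety, and the step I would take the most care over, is the boundary between the two cases. Lemma~\ref{LemmaX1} bounds $\widehat{\HFK}(K,A)$ only for $|A|>1$, so it controls the top Alexander grading exactly when $g(K)>1$ and says nothing when $g(K)=1$. This is precisely why the genus-one knots must be imported separately from Corollary~\ref{genus1class}, and why the sole non-fibered example surfaces there rather than in the higher-genus regime; I would make sure the inequality in Lemma~\ref{LemmaX1} is invoked only in the range where it is valid, and otherwise defer entirely to the genus-one classification.
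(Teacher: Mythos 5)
Your proposal is correct and follows essentially the same route as the paper: for $g(K)>1$ the classification (Theorem~\ref{infinityclassification}, or equivalently Lemma~\ref{LemmaX1} plus genus detection) forces $\rank(\widehat{\HFK}(K,g(K)))=1$, whence fiberedness by Ghiggini--Ni, and the genus-one case reduces to Corollary~\ref{genus1class}. If anything, you are more explicit than the paper, which leaves the genus-one bookkeeping (fiberedness of $T(2,-3)$ and the figure eight, non-fiberedness of the mirror of $5_2$) implicit.
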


\begin{proof}
If $K$ is an almost $L$-space knot with $g(K)>1$ then the rank of the knot Floer homology of $K$ in the maximal Alexander grading is $1$ by Theorem~\ref{infinityclassification}. It follows that $K$ is fibered by work of Ghiggini~\cite{ghiggini2008knot} and Ni~\cite{ni2007knot}\change{.}
\end{proof}

\begin{corollary}\label{tauk}
Suppose $K$ is an almost $L$-space knot for which $|\tau(K)|< g_3(K)$. Then $K$ is the figure eight knot.
\end{corollary}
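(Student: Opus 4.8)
The plan is to read $\tau(K)$ off the models in Theorem~\ref{infinityclassification} and show that $|\tau(K)|<g_3(K)$ can only occur in genus one, where Corollary~\ref{genus1class} leaves three explicit knots to check by hand. Recall that $\tau(K)$ is the Alexander grading of the distinguished generator surviving in the vertical homology $H_*(C(\{i=0\}))\cong\widehat{\HF}(S^3)\cong\F$, computed with respect to the components of the differential that preserve the $U$-grading; equivalently, one passes to a vertically simplified basis and reads off the Alexander grading of the unique unpaired generator. I would first locate this generator in each model.

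Next I would dispatch the case $g_3(K)\ge 2$ by showing $|\tau(K)|=g_3(K)$, which contradicts the hypothesis. The key input is from Section~\ref{chain}: in case~\ref{staircasebox} the box summand, and in case~\ref{almoststaircase} the central modification of the almost staircase, is confined to the diagonal band $\{|j-i|\le 1\}$, i.e. it contributes only to Alexander gradings in $\{-1,0,1\}$. Hence when $g_3(K)\ge 2$, every generator in the top Alexander grading $g_3(K)$ lies on a staircase arm, and the same vertical-homology computation as for $L$-space knots (together with the fact that a box is vertically acyclic) forces the surviving generator to sit at the extreme Alexander grading $\pm g_3(K)$. One can confirm this directly on the models: in the almost staircases the unpaired vertical generator is $x_{\pm(N+1)}$, which lies at Alexander grading $\pm g_3(K)$. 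Thus $|\tau(K)|=g_3(K)$ whenever $g_3(K)\ge 2$, and the hypothesis forces $g_3(K)=1$.

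With $g_3(K)=1$, Corollary~\ref{genus1class} gives $K\in\{T(2,-3),\,4_1,\,\text{mirror of }5_2\}$, and it remains to compute $\tau$ for these. Here $\tau(T(2,-3))=-1$ and $\tau(\text{mirror of }5_2)=-1$, so both have $|\tau|=1=g_3$, while the figure eight has a trivial staircase summand and $\tau(4_1)=0$, giving $|\tau|=0<1$. Hence the figure eight knot is the only possibility, as claimed.

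The main obstacle is the genus-reduction step, and specifically making rigorous that the box (or almost-staircase defect) cannot contribute to, nor pull the surviving vertical generator away from, the top Alexander grading when $g_3(K)\ge 2$. This is exactly what the confinement to $\{|j-i|\le 1\}$ established in Section~\ref{chain} buys us; once it is invoked, the $\tau$-computation reduces to the staircase case of Ozsv\'ath-Szab\'o, and the genus-one endgame is a finite check.
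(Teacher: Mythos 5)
Your proposal is correct and follows essentially the same route as the paper, whose entire proof is to cite Theorem~\ref{infinityclassification} (forcing $|\tau(K)|=g_3(K)$ whenever the staircase, rather than the box or almost-staircase defect, carries the top Alexander grading, i.e.\ whenever $g_3(K)\geq 2$) together with Corollary~\ref{genus1class} and the genus-one check. Your write-up just makes explicit the details the paper leaves implicit; the only blemishes are trivial (for type~$2$ almost staircases the unpaired vertical generator is $x_{\pm N}$, not $x_{\pm(N+1)}$, and the sign of $\tau$ of the mirror of $5_2$ depends on conventions, though only $|\tau|=1$ matters).
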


\begin{proof}
    This follows immediately from Theorem~\ref{infinityclassification} and Corollary~\ref{genus1class}.
\end{proof}

\begin{corollary}[\cite{baldwin2022characterizing}]
    The only almost $L$-space knots that are not strongly quasi-positive are the figure eight knot and the left handed trefoil.
\end{corollary}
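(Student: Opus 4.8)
The plan is to leverage the classification in Theorem~\ref{infinityclassification} together with the corollaries already established. The known baseline is that $L$-space knots are strongly quasi-positive, a fact due to Hedden which follows from their staircase $\CFK^\infty$ type. Since almost $L$-space knots have $\CFK^\infty$ type either (A) a staircase plus a box, or (B) an almost staircase, I would argue that the strongly quasi-positive property is governed by the staircase ``skeleton'' of the complex, which controls $\tau(K)$ and the top Alexander grading, and then isolate the finitely many exceptional cases.

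First I would recall that by Corollary~\ref{tauk}, any almost $L$-space knot $K$ with $|\tau(K)| < g_3(K)$ is the figure eight knot. The figure eight is amphichiral with $\tau = 0 < 1 = g_3$, and it is not strongly quasi-positive (a positive-genus strongly quasi-positive knot $K$ satisfies $\tau(K) = g_3(K) > 0$ by Livingston's result that $\tau = g$ for such knots, combined with the slice-Bennequin inequality). So the figure eight is genuinely an exception and accounts for the $|\tau| < g_3$ case entirely. For all remaining almost $L$-space knots we then have $|\tau(K)| = g_3(K)$.

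Next, among those knots with $|\tau(K)| = g_3(K)$, I would split by the sign of $\tau$. A knot with $\tau(K) = g_3(K) > 0$ should be shown strongly quasi-positive by appealing to the structure of the staircase component: the relevant criterion is that $\tau(K) = g(K)$ together with the almost-staircase/staircase-plus-box description forces the top of $\widehat{\HFK}$ to sit in Maslov grading $0$ with the fiberedness from Corollary~\ref{fibered}, placing $K$ in the regime where Hedden's characterization of strongly quasi-positive fibered knots (via the contact invariant / the fact that the top Alexander graded piece detects the right-veering open book) applies. The genus-one cases are handled directly by Corollary~\ref{genus1class}: among $T(2,-3)$, the figure eight, and the mirror of $5_2$, one checks strong quasi-positivity case by case. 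The mirror of $5_2$ is strongly quasi-positive (it is the $(1,1)$ closure realizing the right-handed form), whereas $T(2,-3)$, the left-handed trefoil, has $\tau = -1 = -g_3 < 0$ and is \emph{not} strongly quasi-positive since a nontrivial strongly quasi-positive knot must have $\tau = +g_3$.

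The remaining content is to rule out strong quasi-positivity failing for any knot with $\tau(K) = -g_3(K) < 0$ other than the left-handed trefoil, and to confirm positivity in the $\tau = +g_3$ cases. I would argue that if $\tau(K) = -g_3(K)$ then the mirror $\overline{K}$ has $\tau(\overline{K}) = g_3(\overline{K})$ and is itself an almost $L$-space knot (mirroring dualizes $\CFK^\infty$ and preserves the classification in Theorem~\ref{infinityclassification}), so by the positive case $\overline{K}$ is strongly quasi-positive; the knots $K$ whose mirrors are strongly quasi-positive but which are themselves not strongly quasi-positive are exactly the genus-one exception $T(2,-3)$ and the amphichiral figure eight already accounted for. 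The main obstacle I anticipate is the genus-one base case, where strong quasi-positivity is not detected by $\tau$ alone and must be verified against Baldwin--Sivek's nearly fibered classification and known diagrams; everything in higher genus is forced by the Hedden-type equivalence $\tau(K) = g_3(K) \Leftrightarrow K$ strongly quasi-positive for fibered knots, which Corollary~\ref{fibered} makes available.
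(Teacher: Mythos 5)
Your proposal matches the paper's argument in two of its three components: the genus-one case is settled exactly as in the paper by Corollary~\ref{genus1class} together with a case-by-case check of $T(2,-3)$, the figure eight, and the mirror of $5_2$, and the positive case ($\tau(K)=g_3(K)$, genus greater than one) is settled exactly as in the paper by fiberedness (Corollary~\ref{fibered}) plus Hedden's theorem that a fibered knot is strongly quasi-positive if and only if $\tau(K)=g(K)$. The gap is in the remaining step, where you must rule out an almost $L$-space knot of genus at least two with $\tau(K)=-g_3(K)$; this is the step that carries the real content, and your mirroring argument for it does not work.

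The argument fails at its first move: the class of almost $L$-space knots is \emph{not} closed under mirroring, and the classification of Theorem~\ref{infinityclassification} is \emph{not} closed under dualization. Almost $L$-space knots are defined by the rank of $\widehat{\HF}$ of large \emph{positive} surgeries, and mirroring converts positive surgeries into negative ones, since $S^3_{-n}(\overline{K})=-S^3_{n}(K)$. Concretely, $T(2,-3)$ is an almost $L$-space knot whose mirror $T(2,3)$ is an $L$-space knot rather than an almost $L$-space knot, and $T(2,3)\#T(2,3)$ is an almost $L$-space knot whose mirror is neither: the directions of the differentials in Definitions~\ref{staircase}, \ref{def:almoststaircase1} and~\ref{def:almoststaircase2} are part of the data, and dual staircases of genus at least two are not of these forms. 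Moreover, even granting that $\overline{K}$ were an almost $L$-space knot, your closing claim --- that the only knots whose mirrors are strongly quasi-positive but which are not themselves strongly quasi-positive are $T(2,-3)$ and the figure eight --- is false for knots in general (the mirror of any non-amphichiral strongly quasi-positive knot, e.g.\ $T(2,-5)$, is such a knot), and restricted to almost $L$-space knots it is precisely the statement you are trying to prove, so the argument is circular. What is actually needed is that no almost $L$-space knot of genus at least two has $\tau(K)=-g(K)$, and the paper gets this directly from the classification rather than from any mirroring symmetry: the complexes appearing in Theorem~\ref{infinityclassification} (see the $\F_0$ summand in Lemma~\ref{grading11}) force the top Alexander grading of $\widehat{\HFK}(K)$ to sit in Maslov grading $0$, hence $\tau(K)=g(K)$ for every almost $L$-space knot of genus greater than one; combined with Corollary~\ref{fibered} and Hedden's criterion this yields strong quasi-positivity for all of them at once, with no case division on the sign of $\tau$.
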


\begin{proof}
    If $K$ is an almost $L$-space knot of genus strictly greater than $1$, the result follows immediately from Theorem~\ref{infinityclassification}, Corollary~\ref{genus1class} and Hedden's result that knot Floer homology detects if fibered knots are strongly quasi-positive~\cite{hedden2010notions}. For the genus $1$-case, the result follows from Corollary~\ref{genus1class} together with the fact that the mirror of $5_2$ is strongly quasi-positive while the figure eight knot and $T(2,-3)$ are not.
\end{proof}

We now recover a result of Hedden:

\begin{corollary}[\cite{hedden2007knot,hedden2011floer}]~\label{Hedd}
    Suppose $K$ is a knot and $\rank(\widehat{\HFK}(K_n))=n+2$. Then $K$ is a non-trivial $L$-space knot and $n=2g(K)-1$.
\end{corollary}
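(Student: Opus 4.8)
The plan is to reduce the computation of $\widehat{\HFK}(K_n)$ to intersection data of the immersed curve invariant $\gamma(K)$, exactly as in the proof of Proposition~\ref{stability}. The complement of the core $K_n$ inside $S^3_n(K)$ is literally the exterior of $K$, so its immersed curve invariant is again $\gamma(K)$; the meridian of $K_n$ is the slope-$n$ curve and the basepoint recording the Alexander grading sits at the puncture. I would first record the analogue of Proposition~\ref{Hanselmanrank} for the core, namely that $\rank(\widehat{\HFK}(K_n))$ is the minimal geometric intersection number of $\gamma(K)$ with a \emph{based} slope-$n$ curve --- the number of intersections of a slope-$n$ line with the lift of $\gamma(K)$ to the relevant cover, where isotopies are not permitted to cross the punctures. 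Forgetting the basepoint recovers the plain count $\rank(\widehat{\HF}(S^3_n(K)))$ of Proposition~\ref{Hanselmanrank}, and since the Alexander filtration on $\widehat{\CF}(S^3_n(K))$ induced by the core has associated graded homology $\widehat{\HFK}(K_n)$, the convergent spectral sequence gives
\[
\rank(\widehat{\HFK}(K_n)) \ \geq\ \rank(\widehat{\HF}(S^3_n(K)))\ \geq\ |n|,
\]
the last inequality being~\ref{H1rankbound}. So the hypothesis places us exactly two above the floor $|n|$.

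With this in hand the argument becomes a combinatorial study of the pulled-tight diagram of $\gamma(K)$, with its unique non-vertical segment of slope $m$ and its $v$ vertical segments (I write $m$ for the slope to avoid clashing with the surgery coefficient). I would first dispose of the unknot: its curve is a single horizontal segment, the core of every surgery is a Floer-simple knot, and $\rank(\widehat{\HFK}(U_n)) = |n|$ for all $n$, never $|n|+2$; hence $K$ is nontrivial. The main work is then to show that $\rank(\widehat{\HFK}(K_n)) = n+2$ forces $K$ to be an $L$-space knot. Here the plain count alone is insufficient, since both almost $L$-space knots (where $v = m+2$) and $L$-space knots at $n=2g(K)-2$ produce $\rank(\widehat{\HF}(S^3_n(K))) = n+2$. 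The point is rather that the \emph{based} count strictly exceeds the plain count whenever $\gamma(K)$ is not the single embedded staircase curve of an $L$-space knot: any self-intersection, extra component, or extra wrapping of $\gamma(K)$ around the puncture (equivalently $v - m \geq 2$) produces intersection points of the based slope-$n$ line that cancel in $\widehat{\HF}(S^3_n(K))$ but survive in the associated graded $\widehat{\HFK}(K_n)$, pushing the rank strictly above $n+2$ for every admissible $n$. Establishing this strict excess --- and thereby ruling out all non-$L$-space knots, almost $L$-space knots included --- is the crux; it identifies $\gamma(K)$ as an embedded staircase, so that $m = v = 2g(K)-1$ and $\rank(\widehat{\HF}(S^3_N(K))) = N$ for all large $N$, the immersed-curve characterization of $L$-space knots.

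Once $K$ is known to be an $L$-space knot, the based count can be evaluated explicitly as a function of $n$. For $n \geq 2g(K)$ the slope-$n$ line is steep enough to clear the extremal lattice point, the basepoint imposes no extra crossings, the core is Floer simple, and $\rank(\widehat{\HFK}(K_n)) = \rank(\widehat{\HF}(S^3_n(K))) = n$. For $n = 2g(K)-1$ the line is forced to make exactly two additional crossings near the outermost turn of the staircase, giving $\rank(\widehat{\HFK}(K_n)) = (2g(K)-1)+2 = n+2$. For $0 < n < 2g(K)-1$ the rank strictly exceeds $n+2$: when $n \leq 2g(K)-3$ already $\rank(\widehat{\HF}(S^3_n(K))) = |n-m| + v = (2g(K)-1-n) + (2g(K)-1) > n+2$, while at the remaining value $n = 2g(K)-2$, where the plain rank equals $n+2$, the based count is strictly larger because the filtration contributes a surviving generator. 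Finally the sign of $n$ is handled directly: the displayed inequality forces $n + 2 \geq |n|$, so $n < 0$ leaves only $n = -1$ with $\rank = 1 = |n|$, the minimal value rather than next-to-minimal, which is incompatible with the critical-slope configuration just described. Thus the hypothesis selects precisely a nontrivial $L$-space knot with $n = 2g(K)-1$.

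I expect the genuine difficulty to lie entirely in the second and third paragraphs: setting up the based intersection formula for the core, and proving that the based excess over the plain $\widehat{\HF}$ count is strictly positive for every non-staircase curve while equaling exactly $2$ at the single critical slope of a staircase. The reduction to immersed curves, the spectral-sequence inequality, and the exclusion of the unknot and of negative $n$ are routine by comparison.
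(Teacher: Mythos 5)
Your proposal follows the same route as the paper in outline---Hanselman--Rasmussen--Watson immersed curves, the based intersection count computing $\rank(\widehat{\HFK}(K_n))$ versus the unbased count computing $\rank(\widehat{\HF}(S^3_n(K)))$, the unbased count alone disposing of knots whose curve has $v-m\geq 4$, and an explicit staircase count giving the $L$-space conclusion at $n=2g(K)-1$. But it omits the one step that makes this statement a \emph{corollary} in this paper. The only knots for which the unbased count equals $n+2$ at admissible slopes are $L$-space knots at $n=2g(K)-2$ and almost $L$-space knots at every $n\geq 2g(K)-1$, so the entire content of the hard direction is showing that for almost $L$-space knots the based count strictly exceeds $n+2$. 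The paper gets this from Theorem~\ref{infinityclassification}: the classification, fed through Proposition~48 of~\cite{hanselman2018heegaard}, determines the curve of an almost $L$-space knot exactly, and in particular shows that its extra feature (the closed box component, or the kink of an almost staircase) sits \emph{adjacent to the puncture}, producing a local maximum just above the peg at $(0,\tfrac12)$; the strict inequality $\rank(\widehat{\HFK}(K_n))>\rank(\widehat{\HF}(S^3_n(K)))=n+2$ is then immediate. Your proposal never invokes Theorem~\ref{infinityclassification} at all; in its place you assert that \emph{any} non-staircase curve ("any self-intersection, extra component, or extra wrapping") has strictly positive based excess at every admissible slope. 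You correctly flag this as the crux, but you do not prove it, and that is a genuine gap: everything else in the argument (the unknot, the $v-m\geq4$ knots, the staircase slope count) is routine by comparison, as you yourself observe.

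Two further points about the missing step. First, the claim as you state it is both broader than needed and imprecise: no excess is required when $v-m\geq 4$, and what must actually be shown is that a curve with $v-m=2$ picks up at least two extra based intersections at every $n\geq 2g(K)-1$. Second, this is not a formality. The based excess depends on \emph{where} the extra cap or closed component sits relative to the puncture: the unbased line can be straightened to meet it minimally, while the based line is pinned only at the lifts of the puncture, so one must argue (say, by lifting to the planar cover and using winding numbers about lattice points, or an equivalent minimal-position argument) that the translate of the based line through an encircled peg cannot be homotoped off the extra feature wherever that feature lies. Nothing of this sort appears in your sketch; the paper sidesteps it entirely because its classification theorem pins the feature next to the basepoint. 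Finally, your disposal of negative $n$ does not work as written: for $n=-1$ the unknot satisfies $\rank(\widehat{\HFK}(U_{-1}))=1=n+2$, so negative $n$ is excluded by the convention that $K_n$ is the core of a positive surgery (as in Hedden and in this paper), not by your rank inequality.
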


Recall here that $K_n$ denotes the core of $n$-\change{surgery} on $K$. Again\change{,} we find it helpful to think of surgery in terms of Hanselman-Rasmussen-Watson's immersed curve invariants\change{, which allow us to readily compute the knot Floer homology of $K_n$~\cite[Section 4.3]{hanselman2018heegaard}. In slightly more detail, recall that if $\gamma_K$ is the immersed curve for $K$, viewed as living in the torus punctured at a basepoint $z$ then --- generally speaking --- the knot Floer homology of the core of $n$ surgery can be computed as the geometric intersection number of $\gamma_K$ with a curve of slope $n$  --- with respect to a specific framing of the torus --- starting and ending at $z$; see~\cite[Remark 53]{hanselman2018heegaard}.}

\begin{proof}

The immersed curve invariants of almost $L$-space knots \change{ are determined by Theorem~\ref{infinityclassification} via an application of~\cite[Proposition 48]{hanselman2018heegaard}, since each filtered basis in the statement of Theorem~\ref{infinityclassification} is both horizontally and vertically simplified (see Definition~\cite[Definition 11.23]{LOTborderedbook}.)}. It \change{then }follows \change{from the immersed curve formula for $\widehat{\HFK}(K_n)$ as discussed above} that if $K$ is an almost $L$-space knot then ${\rank(\widehat{\HFK}(K_n))>n+2}$. \change{More specifically, the immersed curve invariant --- viewed knot as a curve in the infinite strip $[-1,1]\times \R$ with punctures $\{(0,x):x\in\Z\frac{1}{2}\}$ --- of an almost $L$-space knot, when put in peg-board position, has a local max just above the ``peg" at $(0,\frac{1}{2})$. This implies that $\rank(\widehat{\HFK}(K_n))>\rank(\widehat{\HF}(S^3_n(K))=n+2$.} Indeed, we have that if $K$ is neither an $L$-space knot nor an almost $L$-space knot then $\rank (\widehat{\HFK}(K_n))\geq \rank(\widehat{\HF}(S^3_n(K)))>n+2$. Likewise it follows immediately from the classification of immersed curves for $L$-space knots \change{and the immersed curve formula for $\widehat{\HFK}(K_n)$ as discussed above} that if $K$ is an $L$-space knot then $\rank(\widehat{\HFK}(K_n))=n+2$ if and only if $K$ is non-trivial and $n=2g(K)-1$. In sum we have that $\rank(\widehat{\HFK}(K_n))=n+2$ if and only if $K$ is an $L$-space knot and $n=2g(K)-1$.
\end{proof}

\begin{proposition}\label{HFKcables}
      Suppose $K$ is an $L$-space knot. If a $2$ component link $L$ satisfies $\widehat{\HFK}(L)\cong\widehat{\HFK}(K_{2,4g(K)-2})$. Then $L$ is a $(2,4g(K)-2)$-cable of an $L$-space knot $K'$ such that $\widehat{\HFK}(K')\cong\widehat{\HFK}(K)$.
\end{proposition}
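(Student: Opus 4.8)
The plan is to follow the template that the author and Dey developed for cable link detection in~\cite{binns2022cable}: first pin down the Seifert genus and fiberedness of $L$ from $\widehat{\HFK}(L)$, then use these together with the linking data to recognize $L$ as a cable link, and finally run the cabling formula backwards to recover the companion. As a preliminary computation, since $K$ is an $L$-space knot its $\widehat{\HFK}(K)$ is a staircase, and Hanselman--Watson's cabling formula~\cite{hanselman2019cabling} (equivalently Hedden's cable computation) determines $\widehat{\HFK}(K_{2,4g(K)-2})$ explicitly. The features I would extract are the maximal Alexander grading $A_{\max}$ in which it is supported, the fact that it has rank one there, and the linking number $q/2 = 2g(K)-1$ of the two components recorded by the grading. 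The choice $q = 4g(K)-2 = 2(2g(K)-1)$ is precisely the threshold cabling slope, which makes the top Alexander gradings of the cable a faithful copy of $\widehat{\HFK}(K)$; this is what will let me reverse the construction at the end.

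Next, because $L$ is given to be a two-component link with $\widehat{\HFK}(L)\cong\widehat{\HFK}(K_{2,4g(K)-2})$, the grading $A_{\max}$ together with Ozsv\'ath--Szab\'o's detection of the Thurston norm by link Floer homology~\cite{HolomorphicdiskslinkinvariantsandthemultivariableAlexanderpolynomial} determines the Seifert genus of $L$, so $L$ bounds a surface $F$ of the same Euler characteristic as a fiber surface of $K_{2,4g(K)-2}$. Since $\widehat{\HFK}(L)$ has rank one in grading $A_{\max}$, Ni's fiberedness detection~\cite{ni2007knot} shows that $L$ is fibered with fiber $F$. Reading off the collapsed Alexander grading also records the linking number of the two components of $L$ as $2g(K)-1$.

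The main obstacle is the next step: upgrading ``fibered two-component link of the correct genus and linking number'' to ``$(2,4g(K)-2)$-cable.'' Here I would import the topological recognition argument of~\cite{binns2022cable}, arguing that the two components of $L$ cobound an essential annulus $A$ whose core is a knot $K'$; concretely, the Euler characteristic of $F$ together with the fibered structure should force $F$ to be assembled from two parallel copies of a fiber surface joined along this annulus, so that $L = K'_{2,4g(K)-2}$ for some knot $K'$, and the fibration of $L$ restricts to one on the exterior of $K'$, making $K'$ fibered of genus $g(K)$. This is where essentially all of the topological content lies, and it is the part I expect to be delicate.

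Finally, with $L = K'_{2,4g(K)-2}$ in hand, I would run Hanselman--Watson's cabling formula~\cite{hanselman2019cabling} in reverse: at the threshold slope the top Alexander gradings of the cable reproduce $\widehat{\HFK}(K')$, so the equality $\widehat{\HFK}(K'_{2,4g(K)-2}) = \widehat{\HFK}(L) = \widehat{\HFK}(K_{2,4g(K)-2})$ yields $\widehat{\HFK}(K')\cong\widehat{\HFK}(K)$. Since $\widehat{\HFK}(K)$ is the (staircase) knot Floer homology of an $L$-space knot, Lidman--Moore--Zibrowius~\cite[Lemma 2.7]{lidman2020space} (or inspection of the immersed curve) forces $K'$ to be an $L$-space knot as well, completing the argument.
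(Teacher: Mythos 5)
Your proposal follows the same broad template as the paper (the Binns--Dey cable-recognition scheme of~\cite{binns2022cable}), but two of its central steps fail, and both failures trace back to the orientation convention. The link $K_{2,4g(K)-2}$ is oriented as the boundary of the cabling annulus, so its norm-minimizing Seifert surface is that annulus ($\chi=0$), and by Juh\'asz's surface-decomposition theorem the top Alexander grading of $\widehat{\HFK}(K_{2,4g(K)-2})$ is identified with $\SFH$ of the annulus exterior. That sutured manifold is the exterior of $K$ with two sutures of slope $2g(K)-1$, i.e.\ its $\SFH$ is $\widehat{\HFK}$ of the core of $(2g(K)-1)$-surgery on $K$, which for an $L$-space knot has rank $2g(K)+1$, not $1$. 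So your claims that the top grading has rank one and that $L$ is fibered are both false: Ni's detection theorem~\cite{ni2007knot} in fact gives the opposite conclusion (an annulus-fibered link exterior would be Seifert fibered, whereas the cable exterior contains the exterior of $K$ as a JSJ piece), and your picture of the Seifert surface as ``two parallel fiber surfaces joined along an annulus'' belongs to the parallel (braid-like) orientation, which is not the convention in force. The correct recognition step, as in~\cite{binns2022cable}, is purely norm-theoretic: norm detection forces the norm-minimizing surface of $L$ to have $\chi=0$, and after ruling out degenerate configurations it must be a single annulus, exhibiting $L$ as $K'_{2,4g(K)-2}$ for $K'$ the annulus core.

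The endgame also does not work as stated. Hanselman--Watson's cabling formula~\cite{hanselman2019cabling} is a statement about cable \emph{knots} ($p,q$ coprime), not about this two-component link; and even at the threshold slope, what the top of the (annulus-oriented) cable link's invariant reproduces is $\widehat{\HFK}$ of the surgery core $K'_{2g(K)-1}$, not a ``faithful copy of $\widehat{\HFK}(K')$.'' The missing ingredient---the one the paper's proof explicitly runs on---is Corollary~\ref{Hedd}: the hypothesis $\widehat{\HFK}(L)\cong\widehat{\HFK}(K_{2,4g(K)-2})$ transfers the rank $(2g(K)-1)+2$ to $\widehat{\HFK}(K'_{2g(K)-1})$, and Hedden's theorem then forces $K'$ to be a non-trivial $L$-space knot with $2g(K')-1=2g(K)-1$. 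Once that is known, $\widehat{\HFK}(K')\cong\widehat{\HFK}(K)$ follows because $L$-space knot staircases are determined by their Alexander polynomials, which agree. Your closing appeal to~\cite[Lemma 2.7]{lidman2020space} is consistent with the paper's remarks, but it sits downstream of the broken fiberedness and cabling-formula steps and cannot repair them.
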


\begin{proof}[Proof of Proposition~\ref{HFKcables}]
Applying~\ref{Hedd}, this result follows as in the proof of~\cite[Theorem 4.1]{binns2022cable}.
\end{proof}

\begin{proposition}\label{HFLcables}
    Suppose $K$ is an $L$-space knot $\widehat{\HFL}(L)\cong\widehat{\HFL}(K_{m,2mg(K)-m})$. Then $L$ is a $(2,2mg(K)-m)$ cable of an $L$-space knot $K'$ with $\widehat{\HFK}(K')\cong\widehat{\HFK}(K)$.
\end{proposition}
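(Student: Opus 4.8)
The plan is to establish this as the link Floer homology refinement of the cabling detection argument of the author and Dey, adapting the proof of~\cite[Theorem 3.1]{binns2022cable}; where the knot Floer case of Proposition~\ref{HFKcables} relied on Corollary~\ref{Hedd}, here the stronger structural data recorded by $\widehat{\HFL}$ will do the work directly. The starting point is that an $L$-space knot $K$ is fibered of Seifert genus $g(K)$, so the pattern slope $2g(K)-1$ is precisely the framing for which the cable $C:=K_{m,2mg(K)-m}$ is a fibered link, its fiber obtained from that of $K$ by the cabling annulus construction; since $\gcd(m,2mg(K)-m)=m$, the link $C$ has exactly $m$ components, each a parallel copy of $K$ carrying the framing $2g(K)-1$.

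First I would extract the coarse topological invariants of $L$ from the isomorphism $\widehat{\HFL}(L)\cong\widehat{\HFL}(C)$. The multi-Alexander grading takes values in a lattice of rank equal to the number of components, so $L$ has $m$ components. Ozsv\'ath--Szab\'o's theorem that link Floer homology detects the Thurston norm, applied to the support of $\widehat{\HFL}(L)$, forces the Thurston norm of the complement of $L$ to agree with that of $C$, pinning down the extremal Alexander multigradings and the resulting Seifert-genus data of $L$; the rank-one condition at these extremal gradings then detects, via the link analogue of Ni's fiberedness criterion, that $L$ is fibered with fiber of the same Euler characteristic as the fiber of $C$. The hypothesis that the pattern slope equals the threshold value $2g(K)-1$ is what guarantees that the extremal portion of $\widehat{\HFL}$ is isolated enough to carry this information cleanly.

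The hard part will be upgrading this numerical and fiberedness data to the geometric statement that $L$ is genuinely a cable. Following~\cite{binns2022cable}, I would argue that the fibration together with the Thurston-norm data forces the JSJ structure of the complement of $L$ to contain the cable space of slope $2g(K)-1$, exhibiting $L$ as the cable $K'_{m,2mg(K)-m}$ of some companion knot $K'$; a component-wise comparison of $\widehat{\HFL}(L)$ with $\widehat{\HFL}(K'_{m,2mg(K)-m})$ then recovers $\widehat{\HFK}(K')\cong\widehat{\HFK}(K)$. Finally, since $\widehat{\HFK}(K')\cong\widehat{\HFK}(K)$ and $K$ is an $L$-space knot, $K'$ is itself an $L$-space knot, either by inspecting its immersed curve invariant or by~\cite[Lemma 2.7]{lidman2020space}, which completes the argument.
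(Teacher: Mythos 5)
Your outline follows the right template (the Binns--Dey cabling argument), but it breaks down exactly at the step you wave off as ``a component-wise comparison'': the claim that $\widehat{\HFL}(L)\cong\widehat{\HFL}(K'_{m,2mg(K)-m})$ directly recovers $\widehat{\HFK}(K')\cong\widehat{\HFK}(K)$. No such direct recovery is available. What the argument of~\cite[Theorem 3.1]{binns2022cable} actually extracts from the $\widehat{\HFL}$ isomorphism, once $L$ is identified as $K'_{m,mn}$ with $n=2g(K)-1$, is (i) equality of Alexander polynomials $\Delta_{K'}(t)=\Delta_K(t)$, coming from the Euler characteristic of $\widehat{\HFL}$, and (ii) rank data for the knot Floer homology of the \emph{core of $n$-surgery}, namely $\rank(\widehat{\HFK}(K'_n))=\rank(\widehat{\HFK}(K_n))=n+2$. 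Neither of these gives $\widehat{\HFK}(K')\cong\widehat{\HFK}(K)$ by itself: knots with equal Alexander polynomials need not have isomorphic knot Floer homology, and $\widehat{\HFK}$ of the core of a surgery does not determine $\widehat{\HFK}$ of the companion knot in general. This is precisely why the paper's proof begins ``Applying Corollary~\ref{Hedd}'': the rank condition $\rank(\widehat{\HFK}(K'_n))=n+2$ is exactly the hypothesis of Corollary~\ref{Hedd}, which upgrades it to the statement that $K'$ is a non-trivial $L$-space knot with $n=2g(K')-1$. Only after that does the $\widehat{\HFK}$ isomorphism follow, because for $L$-space knots the staircase structure of $\CFK^\infty$ is determined by the Alexander polynomial, so $\Delta_{K'}=\Delta_K$ forces $\widehat{\HFK}(K')\cong\widehat{\HFK}(K)$.

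In other words, you have inverted the logical order of the paper: the paper establishes that $K'$ is an $L$-space knot first (via Corollary~\ref{Hedd}) and deduces the $\widehat{\HFK}$ isomorphism from that, whereas you try to obtain the $\widehat{\HFK}$ isomorphism first and then deduce $L$-space-ness --- that last implication is fine, and is indeed noted in the paper's introduction via the immersed curve invariant or~\cite[Lemma 2.7]{lidman2020space}, but it is the easy direction. Your opening claim that ``the stronger structural data recorded by $\widehat{\HFL}$ will do the work directly,'' so that Corollary~\ref{Hedd} can be dispensed with, is exactly the assertion you would need to substantiate; note that the slope here is the threshold value $n=2g(K)-1$, the one case where the core $K'_n$ fails to be Floer simple, which is why this rank-to-structure upgrade is nontrivial and why the paper singles out Corollary~\ref{Hedd} as the new input beyond~\cite{binns2022cable}. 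As written, the proposal has a genuine gap at its central step.
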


\begin{proof}[Proof of Proposition~\ref{HFLcables}]
Applying~\ref{Hedd}, this result follows as in the proof of~\cite[Theorem 3.1]{binns2022cable}.
\end{proof}

We now give a characterizations of cables of $(m,mn)$ cables of almost $L$-space knots for sufficiently large $m$.

\begin{proposition}\label{almostLspacecables}
 Let $K$ be an almost $L$-space knot.  Suppose $L$ is a link such that $\widehat{\HFL}(L)\cong \widehat{\HFL}(K_{m,mn})$ with $\change{n}>2g(K)-1$. Then $L$ is the $(m,mn)$-cable of an almost $L$-space knot $K'$ such that $\widehat{\HFK}(K')\cong\widehat{\HFK}(K)$.
\end{proposition}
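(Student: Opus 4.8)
The plan is to follow, almost verbatim, the strategy the author and Dey used for the $L$-space companion case in~\cite[Theorem 3.1]{binns2022cable}, feeding in Theorem~\ref{infinityclassification} wherever Ozsv\'ath--Szab\'o's description of $L$-space knots was used there. The first step is to read off the relevant data of the model link $K_{m,mn}$ from Theorem~\ref{infinityclassification}: its number of components $m$, its multivariable Alexander polynomial (hence the pairwise linking numbers, all equal to $n$), its Thurston norm, and---since every almost $L$-space knot is fibered except the mirror of $5_2$ by Corollary~\ref{fibered}---the fact that it is a fibered link with a cabled fiber surface. As $\widehat{\HFL}$ detects the Thurston norm~\cite{HolomorphicdiskslinkinvariantsandthemultivariableAlexanderpolynomial} and fiberedness, the hypothesis $\widehat{\HFL}(L)\cong\widehat{\HFL}(K_{m,mn})$ transfers all of this to $L$: it is a fibered $m$-component link whose fiber has the same genus, with all pairwise linking numbers equal to $n$.

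The heart of the argument, and the step I expect to be the main obstacle, is to upgrade these coincidences to the topological statement that $L=K'_{m,mn}$ for an honest knot $K'$. Here the hypothesis $n>2g(K)-1$ is what makes everything run: it places us in the stable range of the cabling operation, so that the extremal Alexander multi-grading of $\widehat{\HFL}(K_{m,mn})$ occupies a single, rigidly constrained multi-grading recording the cabling annuli, and fiberedness then forces the fiber of $L$ to be a cabled surface whose complementary solid torus exhibits $L$ as the $(m,mn)$-satellite of its core $K'$. This is exactly the recognition step carried out in~\cite[Theorem 3.1]{binns2022cable}, which I would import once the fiber data is matched; as in the proofs of Propositions~\ref{HFKcables} and~\ref{HFLcables}, Corollary~\ref{Hedd} enters to rule out degenerate patterns. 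The lone case outside the fibered setting is $K$ the mirror of $5_2$; this near-fibered companion I would treat separately, using its rank-two top Alexander grading together with Corollary~\ref{genus1class}.

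Having recognized $L=K'_{m,mn}$, the next task is to show that $K'$ is itself an almost $L$-space knot. I would argue this through the cabling formula of Hanselman--Watson~\cite{hanselman2019cabling}: in the stable range $n>2g(K)-1$ the total rank of $\widehat{\HFL}(K'_{m,mn})$ is a monotone function of the complexity of the immersed-curve invariant of $K'$, so matching this rank with that of $K_{m,mn}$ forces the immersed curve of $K'$ to have the same next-to-minimal complexity as that of $K$; equivalently, by the surgery characterisation of Proposition~\ref{stability}, $K'$ is an almost $L$-space knot. Comparing graded Euler characteristics simultaneously gives $\Delta_{K'}=\Delta_K$ and, together with the Thurston-norm computation, $g(K')=g(K)$.

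Finally, to produce the asserted isomorphism $\widehat{\HFK}(K')\cong\widehat{\HFK}(K)$, I would apply the weaker form of the remark following Proposition~\ref{prop:delta0}: for an almost $L$-space knot the Alexander polynomial determines $\widehat{\HFK}$. Since $K$ and $K'$ are both almost $L$-space knots with $\Delta_{K'}=\Delta_K$, this yields $\widehat{\HFK}(K')\cong\widehat{\HFK}(K)$ and finishes the proof. Apart from the cable-recognition step, the only delicate point I foresee is the separate treatment of the non-fibered mirror-of-$5_2$ companion flagged above.
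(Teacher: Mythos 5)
Your skeleton agrees with the paper at both ends: the opening move (importing the argument of~\cite[Theorem 3.1]{binns2022cable} to exhibit $L$ as the $(m,mn)$-cable of some knot $K'$ with $\Delta_{K'}=\Delta_K$) and the closing move (Theorem~\ref{infinityclassification} together with Proposition~\ref{prop:delta0}, so that the Alexander polynomial determines $\widehat{\HFK}(K')$) are exactly what the paper does. The genuine gap is in the middle step, where you must show $K'$ is an almost $L$-space knot. What the recognition step actually hands you is the quantitative statement $\rank(\widehat{\HFK}(K'_n))=n+4$, and this is equally consistent with $K'$ being an honest $L$-space knot at the non-generic slope $n=2g(K')-2$: recall that for an $L$-space knot $\rank(\widehat{\HF}(S^3_{2g-2}(K')))=2g=n+2$, and the core of the surgery then carries knot Floer homology of rank $n+4$. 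Your ``monotone complexity'' argument cannot see this case, because its hidden hypothesis --- that $n$ lies in the stable range for $K'$, i.e.\ $n>2g(K')-1$ --- is precisely what is not yet known; at that stage $g(K')$ is unconstrained. The paper confronts this case explicitly: Hanselman's rank formula (Proposition~\ref{Hanselmanrank}) applied to the pegboard data $(a,b)$ of $K'$ leaves only $b\in\{a,a+2,a+4\}$; the case $b=a+4$ is excluded by the immersed-curve formula for the core; and the case $b=a$ ($K'$ an $L$-space knot, forcing $n=2g(K')-2$) is eliminated only by a genus argument: since $\widehat{\HFL}$ detects the Thurston norm, $K_{m,mn}$ and $K'_{m,mn}$ have equal genera, and the cable-genus results of Gabai~\cite{gabai1986detecting}, Neumann--Rudolph~\cite{neumann1987unfoldings} and Rudolph~\cite{rudolph2002non} then give $g(K')=g(K)$, contradicting $n>2g(K)-1$. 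Without some version of this exclusion your proof does not close.

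Two smaller points. First, the Hanselman--Watson cabling formula~\cite{hanselman2019cabling} you invoke is a statement about cable \emph{knots} (coprime parameters), so it does not directly compute $\widehat{\HFL}$ of the $m$-component links $K'_{m,mn}$; the paper works instead with the rank of $\widehat{\HFK}$ of the core of $n$-surgery on $K'$ and with Proposition~\ref{Hanselmanrank}. Second, your planned special treatment of the non-fibered companion (the mirror of $5_2$) is unnecessary: the paper runs the recognition step of~\cite{binns2022cable} uniformly, without a fiberedness dichotomy, and Corollary~\ref{Hedd} plays no role in this proposition --- it is the $n+2$ rank statement, whereas the relevant rank here is $n+4$.
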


\begin{proof}
Suppose $K$ is as in the statement of the theorem. The same argument as used by the author and Dey in~\cite[Theorem 3.1]{binns2022cable} implies that $L$ is the $(m,mn)$-cable of some knot $K'$ such that $\change{\rank(\widehat{\HFK}(K'_n))=n+4}$ and $\Delta_{K'}(t)=\Delta_K(t)$.

We now show that $K'$ is either an $L$-space knot or an almost $L$-space knot. Let $\gamma$ be the immersed curve invariant of $K'$. Applying Proposition~\ref{Hanselmanrank}, we have that $\change{n}+4\geq \rank(\widehat{\HF}(S^3_{\change{n}}(K'))=|\change{n}-a|+b\geq \change{n}$ where $b$ is the number of vertical components in a singular pegboard diagram for $\gamma$ counted with multiplicity and $a$ is the slope of the unique segment which is not vertical. Suppose $\change{n}< a$. Then $a+b\leq 2\change{n}+4<2a+2$. $b-a$ is an even non-negative integer. It follows that $b=a+2$ or $b=a$. If $b=a$ then $K'$ is an $L$-space knot, as shown by an application of Proposition~\ref{Hanselmanrank}. If $b=a+2$ then $K'$ is an almost $L$-space knot as shown by an application of Proposition~\ref{Hanselmanrank}. If $\change{n}\geq a$ then we have that $b-a\leq4$. However, this yields only one new case, namely $b-a=4$. In this case\change{,} it can be observed from the immersed curve formula for $\widehat{\HFK}(K_{\change{n}})$ that $\rank(\widehat{\HFK}(K_{\change{n}}))\geq \change{n}+6$.

Suppose now that $K'$ is an $L$-space knot. Then $\change{n}=2g(K')-2$, as can be seen from Proposition~\ref{Hanselmanrank}. Since $\widehat{\HFL}(K'_{m,mn})\cong\widehat{\HFL}(K_{m,mn})$, $K_{m,mn}$ and $K'_{m,mn}$ have the same genus. Applying work of Gabai~\cite{gabai1986detecting}, Neumann-Rudolph~\cite{neumann1987unfoldings} and Rudolph~\cite{rudolph2002non}, it in turn follow\change{s} that $K$ and $K'$ have the same genus, contradicting $n>2g(K)-1$.

Thus $K'$ is an almost $L$-space knot. It remains to show that $\widehat{\HFK}(K')\cong\widehat{\HFK}(K)$. Observe that since $K$ and $K'$ share the same Alexander polynomial. Theorem~\ref{infinityclassification}  and Proposition~\ref{prop:delta0} imply that $\Delta_K(t)$ determines $\widehat{\HFK}(K)$ for almost $L$-space knots, completing the proof.
\end{proof}

\begin{corollary}\label{genus1almostLspacecables}
    Link Floer homology detects the $(m,mn)$-cables of $T(2,-3)$, the figure eight knot and the mirror of $5_2$ for $\change{n}>1$.
\end{corollary}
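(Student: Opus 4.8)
The plan is to combine Proposition~\ref{almostLspacecables} with the genus one classification in Corollary~\ref{genus1class}. Fix $K$ to be one of $T(2,-3)$, the figure eight knot, or the mirror of $5_2$; each has genus one, so the hypothesis $n>1$ is precisely the hypothesis $n>2g(K)-1$ required to apply Proposition~\ref{almostLspacecables}. Suppose $L$ is a link with $\widehat{\HFL}(L)\cong\widehat{\HFL}(K_{m,mn})$. Then Proposition~\ref{almostLspacecables} immediately yields that $L$ is the $(m,mn)$-cable of some almost $L$-space knot $K'$ with $\widehat{\HFK}(K')\cong\widehat{\HFK}(K)$. It remains only to identify $K'$ with $K$ and to conclude that the cables agree as links.

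First I would pin down the genus of $K'$. Since knot Floer homology detects the Seifert genus---the top Alexander grading supporting $\widehat{\HFK}$ equals $g$---the isomorphism $\widehat{\HFK}(K')\cong\widehat{\HFK}(K)$ forces $g(K')=g(K)=1$. Thus $K'$ is itself a genus one almost $L$-space knot, and Corollary~\ref{genus1class} confines $K'$ to the same list, namely $T(2,-3)$, the figure eight knot $4_1$, and the mirror of $5_2$.

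Next I would distinguish the three candidates by their knot Floer homology, which is pairwise distinct as a bigraded group. The mirror of $5_2$ is nearly fibered and so has rank two in its top Alexander grading, whereas both $T(2,-3)$ and $4_1$ have rank one there; this separates the mirror of $5_2$ from the other two. The remaining pair $T(2,-3)$ and $4_1$ are then separated by the rank in Alexander grading zero (one versus three), or equivalently by total rank (three versus five). Hence $\widehat{\HFK}(K')\cong\widehat{\HFK}(K)$ singles out exactly one member of the list, giving $K'=K$. Since the cabling parameters $(m,mn)$ are fixed throughout, $L$ is then isotopic to $K_{m,mn}$, which is the asserted detection statement.

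I do not anticipate a serious obstacle here: the substantive work has already been carried out in Proposition~\ref{almostLspacecables} (via the immersed-curve surgery estimates together with the fact, coming from Theorem~\ref{infinityclassification} and Proposition~\ref{prop:delta0}, that the Alexander polynomial determines $\widehat{\HFK}$ for almost $L$-space knots) and in Corollary~\ref{genus1class}. The only mild care needed is the finite check in the preceding paragraph that the three genus one almost $L$-space knots are separated by $\widehat{\HFK}$, and this is immediate from their known invariants.
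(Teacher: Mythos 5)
Your proposal is correct and follows essentially the same route as the paper's own proof: apply Proposition~\ref{almostLspacecables} to get that $L$ is the $(m,mn)$-cable of an almost $L$-space knot $K'$ with $\widehat{\HFK}(K')\cong\widehat{\HFK}(K)$, note $K'$ has genus one and so lies on the list of Corollary~\ref{genus1class}, and then distinguish the three candidates by their knot Floer invariants. The only cosmetic difference is that you separate the three knots by the ranks of $\widehat{\HFK}$ while the paper uses their Alexander polynomials; these are equivalent checks, and your explicit remark that $\widehat{\HFK}$ detects genus (hence $g(K')=1$) simply makes precise a step the paper leaves implicit.
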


\begin{proof}
   Let $K$ be one of the knots in the statement of the proposition. Suppose $L$ is a link such that $\widehat{\HFL}(L)\cong\widehat{\HFL}(K_{m,mn})$ for some $n>1$. Proposition~\ref{almostLspacecables} implies that $L$ is as $(m,mn)$ cable of an almost $L$-space knot $K'$ with the same $\widehat{\HFK}(-)$ type as $K$. Proposition~\ref{genus1class} shows that there are only $3$ genus one almost $L$-space knots, namely those listed in the statement of the proposition. They are each distinguished by their Alexander polynomials, concluding the proof.
\end{proof}

\bibliographystyle{amsalpha}
\bibliography{bibliography}

\providecommand{\bysame}{\leavevmode\hbox to3em{\hrulefill}\thinspace}
\providecommand{\MR}{\relax\ifhmode\unskip\space\fi MR }
\providecommand{\MRhref}[2]{%
  \href{http://www.ams.org/mathscinet-getitem?mr=#1}{#2}
}
\providecommand{\href}[2]{#2}
\begin{thebibliography}{BGW13}

\bibitem[BD22]{binns2022cable}
Fraser Binns and Subhankar Dey, \emph{Cable links, annuli and sutured {F}loer
  homology}, arXiv preprint arXiv:2207.08035 (2022).

\bibitem[BGW13]{boyer2013spaces}
Steven Boyer, Cameron~McA Gordon, and Liam Watson, \emph{On {L}-spaces and
  left-orderable fundamental groups}, Mathematische Annalen \textbf{356}
  (2013), no.~4, 1213--1245.

\bibitem[BS22]{baldwin2022floer}
John~A Baldwin and Steven Sivek, \emph{{F}loer homology and non-fibered knot
  detection}, arXiv preprint arXiv:2208.03307 (2022).

\bibitem[BS24]{baldwin2022characterizing}
John~A. Baldwin and Steven Sivek, \emph{Characterizing slopes for {$5_2$}}, J.
  Lond. Math. Soc. (2) \textbf{109} (2024), no.~6, Paper No. e12951.
  \MR{4760447}

\bibitem[BVV18]{baldwin_note_2018}
John Baldwin and David Vela-Vick, \emph{A note on the knot {Floer} homology of
  fibered knots}, Algebraic \& Geometric Topology \textbf{18} (2018), no.~6,
  3669--3690, Publisher: Mathematical Sciences Publishers.

\bibitem[BZ23]{binns20231}
Fraser Binns and Hugo Zhou, \emph{(1, 1) almost {L}-space knots}, arXiv
  preprint arXiv:2304.11475 (2023).

\bibitem[FRW24]{farber2022fixed}
Ethan Farber, Braeden Reinoso, and Luya Wang, \emph{Fixed-point-free
  pseudo-{A}nosov homeomorphisms, knot floer homology and the cinquefoil},
  Geometry \& Topology \textbf{28} (2024), no.~9, 4337--4381.

\bibitem[Gab86]{gabai1986detecting}
David Gabai, \emph{Detecting fibred links in ${S}^3$}, Commentarii Mathematici
  Helvetici \textbf{61} (1986), no.~1, 519--555.

\bibitem[Ghi08]{ghiggini2008knot}
Paolo Ghiggini, \emph{Knot {F}loer homology detects genus-one fibred knots},
  American journal of mathematics \textbf{130} (2008), no.~5, 1151--1169.

\bibitem[Han22]{hanselman2022heegaard}
Jonathan Hanselman, \emph{Heegaard {F}loer homology and cosmetic surgeries in
  ${S}^{3}$}, Journal of the European Mathematical Society (2022).

\bibitem[Hed07]{hedden2007knot}
Matthew Hedden, \emph{Knot {F}loer homology of {W}hitehead doubles}, Geometry
  \& Topology \textbf{11} (2007), no.~4, 2277--2338.

\bibitem[Hed10]{hedden2010notions}
\bysame, \emph{Notions of positivity and the {O}zsv{\'a}th--{S}zab{\'o}
  concordance invariant}, Journal of Knot Theory and its Ramifications
  \textbf{19} (2010), no.~05, 617--629.

\bibitem[Hed11]{hedden2011floer}
\bysame, \emph{On {F}loer homology and the {B}erge conjecture on knots
  admitting lens space surgeries}, Transactions of the American Mathematical
  Society \textbf{363} (2011), no.~2, 949--968.

\bibitem[HKL19]{hanselman2019remark}
Jonathan Hanselman, Cagatay Kutluhan, and Tye Lidman, \emph{A remark on the
  geography problem in {H}eegaard {F}loer homology}, Proc. of Symposia in Pure
  Math.(102), 2019, pp.~103--113.

\bibitem[HM17]{hendricks2017involutive}
Kristen Hendricks and Ciprian Manolescu, \emph{Involutive {H}eegaard {F}loer
  homology}, Duke Mathematical Journal \textbf{166} (2017), no.~7, 1211--1299.

\bibitem[HRW22]{hanselman2018heegaard}
Jonathan Hanselman, Jacob Rasmussen, and Liam Watson, \emph{Heegaard {F}loer
  homology for manifolds with torus boundary: properties and examples}, Proc.
  Lond. Math. Soc. (3) \textbf{125} (2022), no.~4, 879--967. \MR{4500201}

\bibitem[HRW24]{hanselman2016bordered}
\bysame, \emph{Bordered {F}loer homology for manifolds with torus boundary via
  immersed curves}, J. Amer. Math. Soc. \textbf{37} (2024), no.~2, 391--498.
  \MR{4695506}

\bibitem[HW23]{hanselman2019cabling}
Jonathan Hanselman and Liam Watson, \emph{Cabling in terms of immersed curves},
  Geom. Topol. \textbf{27} (2023), no.~3, 925--952. \MR{4599309}

\bibitem[Krc15]{krca}
David Krcatovich, \emph{The reduced knot {F}loer complex}, Topology Appl.
  \textbf{194} (2015), 171--201. \MR{3404612}

\bibitem[Lin23]{lin2023remark}
Francesco Lin, \emph{A remark on taut foliations and {F}loer homology}, arXiv
  preprint arXiv:2309.01222 (2023).

\bibitem[LMZ22]{lidman2020space}
Tye Lidman, Allison~H. Moore, and Claudius Zibrowius, \emph{{$L$}-space knots
  have no essential {C}onway spheres}, Geom. Topol. \textbf{26} (2022), no.~5,
  2065--2102. \MR{4520302}

\bibitem[LOT18]{LOTborderedbook}
Robert Lipshitz, Peter~S. Ozsvath, and Dylan~P. Thurston, \emph{Bordered
  {H}eegaard {F}loer homology}, Mem. Amer. Math. Soc. \textbf{254} (2018),
  no.~1216, viii+279. \MR{3827056}

\bibitem[Ni07]{ni2007knot}
Yi~Ni, \emph{Knot {F}loer homology detects fibred knots}, Inventiones
  mathematicae \textbf{170} (2007), no.~3, 577--608.

\bibitem[NR87]{neumann1987unfoldings}
Walter Neumann and Lee Rudolph, \emph{Unfoldings in knot theory}, Mathematische
  Annalen \textbf{278} (1987), no.~1-4, 409--439.

\bibitem[OS04a]{Holomorphicdisksandknotinvariants}
Peter Ozsv\'{a}th and Zolt\'{a}n Szab\'{o}, \emph{Holomorphic disks and knot
  invariants}, Advances in Mathematics \textbf{186} (2004), no.~1, 58--116
  (en).

\bibitem[OS04b]{ozsvath2004holomorphic}
Peter Ozsv{\'a}th and Zolt{\'a}n Szab{\'o}, \emph{Holomorphic disks and
  topological invariants for closed three-manifolds}, Annals of Mathematics
  (2004), 1027--1158.

\bibitem[OS05]{ozsvath2005knot}
\bysame, \emph{On knot {F}loer homology and lens space surgeries}, Topology
  \textbf{44} (2005), no.~6, 1281--1300.

\bibitem[OS08a]{HolomorphicdiskslinkinvariantsandthemultivariableAlexanderpolynomial}
Peter Ozsv\'{a}th and Zolt\'{a}n Szab\'{o}, \emph{Holomorphic disks, link
  invariants and the multi-variable {Alexander} polynomial}, Algebraic \&
  Geometric Topology \textbf{8} (2008), no.~2, 615--692, Publisher:
  Mathematical Sciences Publishers.

\bibitem[OS08b]{ozsvath2008knot}
Peter Ozsv{\'a}th and Zolt{\'a}n Szab{\'o}, \emph{Knot {F}loer homology and
  integer surgeries}, Algebraic \& Geometric Topology \textbf{8} (2008), no.~1,
  101--153.

\bibitem[OS10]{ozsvath2010knot}
Peter~S Ozsv{\'a}th and Zolt{\'a}n Szab{\'o}, \emph{Knot {F}loer homology and
  rational surgeries}, Algebraic \& Geometric Topology \textbf{11} (2010),
  no.~1, 1--68.

\bibitem[Pet13]{petkova2013cables}
Ina Petkova, \emph{Cables of thin knots and bordered {H}eegaard {F}loer
  homology}, Quantum Topology \textbf{4} (2013), no.~4, 377--409.

\bibitem[Ras03]{Rasmussen}
Jacob Rasmussen, \emph{{F}loer homology and knot complements},
  arXiv:math/0306378 (2003), arXiv: math/0306378.

\bibitem[Rud02]{rudolph2002non}
Lee Rudolph, \emph{A non-ribbon plumbing of fibered ribbon knots}, Proceedings
  of the American Mathematical Society \textbf{130} (2002), no.~12, 3741--3743.

\bibitem[Sar15]{sarkar2015moving}
Sucharit Sarkar, \emph{Moving basepoints and the induced automorphisms of link
  {F}loer homology}, Algebraic \& Geometric Topology \textbf{15} (2015), no.~5,
  2479--2515.

\bibitem[Zem17]{zemke2017quasistabilization}
Ian Zemke, \emph{Quasistabilization and basepoint moving maps in link {F}loer
  homology}, Algebraic \& geometric topology \textbf{17} (2017), no.~6,
  3461--3518.

\end{thebibliography}
\end{document}